\def\sc#1{{\mathcal #1}}
\def \C  {\sc C}
\def \D {\sc D}
\def \Cf {{\sc C}_f}
\def \Cg {{\sc C}_g}
\def \s  {\sc S}
\def\Q {\mathbb Q}
\def \CQ {{\mathrm{Comm}}_{\Q}}
\def\U {\sc U}
\def\AC { {\empty}_A\backslash \C }
\def\tower { \operatorname{\underline{\bf{Tot}}}}
\def\diag{ \operatorname{diag}}
\def\holim {\operatorname{holim}}
\def\hofib {\operatorname{hofib}}
\def \Tot {\operatorname{Tot}}
\def\sk{\operatorname{sk}}
\def\csk{\operatorname{csk}}
\def\hom{\operatorname{hom}}
\def\tot{\operatorname{tot}}
\theoremstyle{definition}
\newtheorem{Observation}{Observation}
\newtheorem{lem}{Lemma}[section]
\newtheorem{rem}[lem]{Remark}
\newtheorem{prop}[lem]{Proposition}
\newtheorem{thm}[lem]{Theorem}
\newtheorem{cor}[lem]{Corollary}
\newtheorem{defn}[lem]{Definition}
\newtheorem{ex}[lem]{Example}
\title{Combinatorial models for Taylor polynomials of functors}
\author{K. Bauer}
\address{Department of Mathematics \& Statistics, University of Calgary}
\email{bauerk@ucalgary.ca}
\author{R. Eldred}
\address{Mathematisches Institut, Universit\"at M\"unster}
\email{eldred@uni-muenster.de}
\author{B. Johnson}
\address{Department of Mathematics, Union College}
\email{johnsonb@union.edu}
\author{R. McCarthy}
\address{Department of Mathematics, University of Illinois at Urbana-Champaign}
\email{randy@math.uiuc.edu}
\begin{document}

\begin{abstract}

%We use the Taylor towers of \cite{BJM} and \cite{G3} to define a new tower, the varying center tower, for  functors of under categories, that is, categories with a fixed initial object.  We provide a combinatorial model for the limit of this tower.   This is used to show that two models
 %for the Quillen derived de Rham cohomology of rational commutative algebras remain equivalent when extended to $E_\infty$-ring spectra.
 
 Goodwillie's calculus of homotopy functors associates a tower of polynomial approximations, the Taylor tower, to a functor of topological spaces over a fixed space.  We define a new tower, the varying center tower, for  functors of  categories with a fixed initial object, such as algebras under a fixed ring spectrum.  We construct this new tower using elements of the Taylor tower constructions of Bauer, Johnson, and McCarthy for functors of simplicial model categories, and show how the varying center tower differs from Taylor towers in terms of the properties of its individual terms and convergence behavior.    
 We prove that there is a combinatorial model for the varying center tower given as a pro-equivalence between the varying center tower and towers of cosimplicial objects; this generalizes Eldred's cosimplicial models for finite stages of Taylor towers.
 %
% We prove that the limit of the varying center tower is equivalent to a combinatorial construction involving the standard cosimplicial simplicial set $\Delta^{\bullet}_*$.  
 As an application, we present models for the de Rham complex of rational commutative ring spectra due to Rezk on the one hand, and Goodwillie and Waldhausen on the other, and use our result to conclude that these two models will be equivalent when extended to $E_{\infty}$-ring spectra.      \end{abstract}
 
 %%%

\maketitle

\section{Introduction}

This paper arose from a desire to understand the connection between
two seemingly disparate ideas for defining the Quillen derived de Rham cohomology for 
rational commutative algebras and their extensions to ring spectra.   The first, suggested by Friedhelm Waldhausen and Tom Goodwillie, uses Goodwillie's 
calculus of homotopy functors, a theory that has played a significant role in understanding
connections between $K$-theory, Hochschild homology, and related constructions.
The second  is a combinatorial approach explored by Charles Rezk.

For a homotopy functor of spaces or spectra,  that is, a functor that  preserves weak equivalences, Goodwillie constructed  a tower of functors and natural transformations that can be viewed as playing the role of a Taylor series for the functor \cite{G3}.  This construction has been extended to more general contexts, such as simplicial or topological model categories, in \cite{K} and \cite{BJM}.   In this paper, 
we work primarily with the discrete calculus,  a variant of Goodwillie calculus developed in \cite{BJM}.   The discrete calculus associates to a functor $F:\C\rightarrow \s$ and morphism $f:A\rightarrow B$ in $\C$,  a sequence of functors $\Gamma_n^fF:\Cf \to \s$ and 
natural transformations 
\begin{equation}\label{e:taylortower} \xymatrix{ && F \ar[dl]_{\gamma_{n+1}^f}\ar[d]^{\gamma_n^f}\ar[dr]^{\gamma_{n-1}^f}\quad \\
\cdots \ar[r] & \Gamma_{n+1}^fF \ar[r]^{q_{n+1}^f} & \Gamma_n^fF \ar[r]^{q_n^f\quad} & \Gamma_{n-1}^fF \ar[r] ^ \cdots \ar[r] & \Gamma_1^fF \ar[r] & \Gamma_0^fF}\end{equation}
where $\C$ is a simplicial model category,  the category $\Cf$ is the category of factorizations of $f:A\to B$ in $\C$, and $\s$ is a suitable model category of spectra, such as that of \cite{EKMM} or \cite{HSS}.
The $n$th term in this tower can be thought of  as a degree $n$ approximation to $F$, where being degree $n$ means that  the functor takes certain types of $(n+1)$-cubical homotopy pushout diagrams to homotopy pullback diagrams.   Goodwillie's original formulation of  polynomial degree $n$ functors used a stronger notion of degree $n$.
 If $F$ is  nice (in the sense of Definition \ref{d:rho}), the discrete tower converges to $F$.   That is, $F$ is weakly equivalent to the homotopy inverse limit, $\Gamma^f_{\infty}F$, of the tower.

To explain how we see the de Rham complex in terms of Taylor towers, we  first look at an analogous situation for the Taylor series of a function.    Consider a function $f:{\mathbb R}\rightarrow {\mathbb R}$ and a real number $b$. We can construct the $n$th Taylor polynomial for $f$ about $b$:
\begin{equation}\label{e:taylorpolynomial}
T_n^bf(x):=\sum_{k=0}^n{f^{(n)}(b)(x-b)^k\over k!}.
\end{equation}
Typically, we treat this as a function in the variable $x$ and think of $b$ as being fixed.  However, one could also fix $x$ and consider $T_n^bf(x)$ as a function of $b$ instead.  For functor calculus, this point of view provides a way to think about Taylor towers for functors on categories of objects that naturally live under a fixed object as opposed to over a fixed object.   

In the setting of Goodwillie's functor calculus, expanding a Taylor tower about an object $B$ entails working with objects $X$ that all come equipped with a morphism to the fixed object $B$.  However, in some contexts, such as that of $k$-algebras for a fixed  ring (spectrum) $k$, it is more natural to consider objects $X$ that come with a morphism $k\rightarrow X$ {\it from} the fixed object $k$.  

In this paper, we work with Taylor towers from this perspective. For a functor $F:\AC\rightarrow \s$, from a category of objects under a fixed object $A$,   and an object $f:A\rightarrow X$ in $\AC$, we define $V_nF(f:A\to X)$ as the $n$th term in a {\it varying center} tower for $F$, that is, the analogue in the functor calculus context of the function obtained from $T_n^bf(x)$ by fixing $x$ and varying the center of expansion $b$.  The tower $\{ V_n F\}$ is strikingly different from the tower  $\{\Gamma_n^f F\}$ described earlier.  The $n$th term  is constructed from the degree $n$ functors $\Gamma^f_nF$  by using all choices of functors $\Gamma_n^f$ simultaneously (see Definition \ref{d:varying}). In brief, we define $V_nF(f:A\rightarrow X)$ by  
\[ V_nF(f:A\rightarrow X):= \Gamma_n^fF(A).\]  The analogous functions $T_n^bf(x)$, when considered as a function of $b$, need not be polynomial (see Remark \ref{r:notdegreen}), and we do not expect $V_nF$ to be a degree $n$ functor either.  However, we note that just as is the case for functions (see Remark \ref{r:converge}), the tower $\{V_nF\}$ exhibits peculiar convergence behavior: it approximates the constant functor with value $F(A)$ rather than the functor $F$ itself.

Goodwillie and Waldhausen proposed that  a tower like this varying center tower for the forgetful functor ${\mathcal U}$ from rational commutative ring spectra to $S$-modules, that is, from ring spectra under $H\mathbb Q$,  should play the role of the de Rham complex for rational commutative ring spectra.  We  refer to $V_{\infty} {\mathcal U}$ as the {\it Goodwillie-Waldhausen model} for the de Rham complex.  We explain why this is a reasonable model for the de Rham complex in the case of rational algebras  in Section 3.4 of this paper.  In particular,  for a fixed simplicial rational algebra $f:\mathbb Q_{\cdot}\rightarrow B_{\cdot}$ and object $\mathbb Q_{\cdot}\rightarrow X_{\cdot}\rightarrow B_{\cdot}$, that factors $f$, let $DR^n_{X_{\cdot}}B_{\cdot}$ denote the de Rham complex truncated at the $n$th stage:  
 \[
 DR^n_{X_{\cdot}}B_{\cdot}=\dots \leftarrow 0\leftarrow\dots\leftarrow 0\leftarrow \Omega^n_{X_{\cdot}}B_{\cdot}\leftarrow\dots\leftarrow \Omega^2_{X_{\cdot}}B_{\cdot}\leftarrow  \Omega^1_{X_{\cdot}}B_{\cdot}\leftarrow B_{\cdot}.
  \]
We show that as a functor of $X_{\cdot}$,  $DR^n_{X_{\cdot}}B_{\cdot}\simeq \Gamma_n^f{\mathcal U}(X_{\cdot})$ where $\mathcal U$ is the forgetful functor from algebras to modules.   This implies that the $n$-truncated de Rham complex  $ DR^n_{{\mathbb Q_{\cdot}}}B_{\cdot}$ is $V_n{\mathcal U}(B_{\cdot})$.

The model for the de Rham complex of a rational ring spectrum used by Rezk (in unpublished work \cite{Rezk}) is the cosimplicial ring spectrum $B\otimes _{H\mathbb Q} \sk _1{\Delta}_*^{\bullet}$.  Here, $\sk_1{\Delta}^\bullet_*$ is the $1$-skeleton of the standard simplices $\Delta^n$, assembled into a cosimplicial simplicial set.  The tensor product symbol $\otimes_{H\mathbb Q}$ denotes a coproduct, and is a generalization of the join construction (cf.  Theorem \ref{t:connected}).  For a finite set $U$, $B\otimes_{H\mathbb Q} U$ is the coproduct of $|U|$ copies of $B$ along the morphism ${H\mathbb Q}\to B$.  For rational algebras $A\to B$, Rezk analyzed the homotopy spectral sequence of the cosimplicial object $B\otimes _A \sk _1{\Delta}_*^{\bullet}$.  When the map $A\to B$ is a smooth morphism of $H\Q$-algebras, he determined that the $E_2$-page of this spectral sequence is essentially the de Rham complex and that there are no non-trivial differentials after this page.  Using this, he shows that  $\pi_*(B\otimes _{A} \sk _1{\Delta}_*^{\bullet})$ is isomorphic to the de Rham cohomology of the smooth map $A\to B$ of rational commutative algebras.  We refer to $B\otimes _{H\mathbb Q} \sk _1{\Delta}_*^{\bullet}$ as the {\it combinatorial model} for the de Rham complex.

The combinatorial  and  Goodwillie-Waldhausen models for the de Rham complex of a smooth map of rational
algebras both have natural extensions to suitable categories of $E_{\infty}$-algebras. Because crystalline cohomology is closely related to the de Rham complex, it is expected that a good generalization of the de Rham complex 
to $E_{\infty}$-algebras would also be useful for modelling crystalline cohomology analogues more generally. However,  two models for the de Rham complex of $E_{\infty}$-algebras need not be the same just because they agree rationally. Thus, the 
question this paper seeks to address is whether or not the  rational agreement of these two models for the de Rham complex remains  when the models are  extended to all $E_{\infty}$-algebras,  hence providing evidence that either of these may be regarded as candidates to model the de Rham complex of a map of
$E_{\infty}$-algebras. 
We prove that, not only rationally, but also for all $E_{\infty}$-algebras, the combinatorial and Goodwillie-Waldhausen models for the de Rham complex agree.  In fact, this equivalence is a special 
case of a much more general relationship that holds for a wide range of functors.  
This relationship is established in Theorem \ref{t:connected}.  In the statement of the theorem that follows, $|\cdot |$ denotes the geometric realization of a simplicial object, and $\Tot$ denotes the totalization of a cosimplicial object.   Recall that a homotopy functor is a functor that preserves weak equivalences.  
\vskip .1 in

\noindent {\bf Theorem \ref{t:connected}.} {\it Suppose that $F:\C \to \s$ is a homotopy functor where $\C$ is either $Top$ or $\s$, $f:A\rightarrow B$ is a $c$-connected  map in $\C$, and $F(X)\simeq \Gamma_\infty^fF(X)$ whenever $A\to X\to B$ is a factorization of $f$ with $X\to B$ $\rho$-connected.   
If $F$ commutes with the geometric realization functor, that is, the natural map $|F(X_{\cdot})|\rightarrow F(|X_{\cdot}|)$ is a weak equivalence for each simplicial object $X_{\cdot}$,  then
\[ V_\infty F(A\rightarrow B) \simeq \Tot |F(B\otimes_A sk_n\Delta^\bullet )|\]
whenever $n\geq \rho-c-1$.}
\vskip .1 in 
We note that the condition that $F$ commutes with the geometric realization functor is a mild hypothesis.  For example, \cite[Corollary 5.11]{AMO} shows that this is satisfied by any $n$-excisive functor to spectra which also commutes with filtered colimits of finite complexes.

 The forgetful functor ${\mathcal U}$ satisfies ${\mathcal U}(X)\simeq \Gamma^f_\infty\ {\mathcal U}(X)$ whenever $X\to B$ is  a 1-connected map. Thus, setting $F={\mathcal U}$, $A=H{\mathbb Q}$ and $n=1$ in this theorem yields the desired equivalence between the combinatorial model and the Goodwillie-Waldhausen model for the de Rham complex.  
 
 A key step in proving this theorem is the following proposition.  
 
 \vskip .1 in 
\noindent {\bf Proposition \ref{p:Tot}.}  {\it For each $n\geq 0$, the cosimplicial simplicial sets $\sk_n \Delta_*^\bullet$ are pro-equivalent to the empty set, thought of as a constant cosimplicial simplicial set with value $\emptyset$.  }
\vskip .1 in 
\noindent In other words, we view $\Tot^k\sk_n\Delta^{\bullet}_*$ as approximating the empty set, and by extension, 
$B\otimes _A\Tot^k\sk_n\Delta^{\bullet}$ as approximating $A$.  
 
 When combined with conditions that guarantee that the tower \[\{ |\Gamma_k^f F(X\otimes_A \sk_n \Delta^\bullet_*)|\}\] converges, Theorem \ref{t:connected}  is a direct consequence of Theorem \ref{t:infinity}, which we state below.  This  theorem relates the limit of the varying center  tower $\{V_nF(f:A\rightarrow B)\}$  to the totalization of the object obtained by evaluation of the limit of $\{\Gamma_n^fF\}$ on the  approximation of $A$ given by Proposition \ref{p:Tot}:
 \vskip .1in
 \noindent{\bf Theorem \ref{t:infinity}.} {\it Suppose that $F$ commutes with  realization, $f:A\rightarrow B$ is a morphism in $\C$, and $X$ is an object in $\Cf$.    For any $n\geq 0$, the tower of spectra
  \[
\{ V_kF(f:A\to B)\}_{k\geq 1} \]
is pro-equivalent to the tower of $\Tot$-towers of spectra
\[  \{\Tot^{m(k+1)} |\Gamma_k^fF(X\otimes_A \sk_n \Delta^\bullet_*)|\}_{k\geq 1}\]
\noindent{for} any integer $m\geq n+1$.}

\hskip -5pt An immediate consequence of this theorem is that \[V_\infty F(f:A\rightarrow B) \simeq \Tot |\Gamma_\infty^f F(X\otimes_A \sk_n \Delta_*^\bullet)|,\] since pro-equivalences of towers imply that their homotopy inverse limits are weakly equivalent.
 
This theorem can be compared to Theorem 4.0.2 of the second author's thesis \cite{Rosona} (or the published work, \cite{Rosona-AGT}), which says that there are weak equivalences $P_nF(A)\to \Tot^{(k+1)n}P_nF(X\otimes_A \sk_n \Delta_*^\bullet)$ for each $k$ and $n$, where $F$ is a homotopy functor which commutes with colimits, and $P_nF$ is the $n$th term in Goodwillie's Taylor tower for $F$.    
There is an equivalence $P_nF(A)\simeq \Gamma_n^fF(A)$ where $f:A\to X$, see \cite[6.8, 6.9]{BJM}.  Together with the fact that the tower $\{ V_nF\}$ is constructed using all possible choices of the functors $\Gamma_n^fF$ simultaneously, it is not surprising that there is a similarity between \cite[Theorem 4.0.2]{Rosona} and Theorem \ref{t:infinity}.  However, just as $\Gamma_n^fF$ is only a part of the construction of the tower $\{V_nF\}$, the finite stage $\Tot^{(k+1)n}P_nF(X\otimes_A \sk_n \Delta_*^\bullet)$ is only part of the $\Tot$-tower considered in Theorem \ref{t:infinity}.  

 We also extend Theorem \ref{t:connected}  to  functors from arbitrary simplicial model categories, $\C$,  to spectra.   
 
 \vskip .1 in 
\noindent {\bf Corollary \ref{p:analyticprep}.}   {\it Let $\C$ be a simplicial model category and let $F:\C\rightarrow \s$ be a  homotopy functor that commutes with the geometric realization functor.  Let $f:A\rightarrow B$ be a morphism in $\C$ and $X$ be an object in $\C$ with morphisms $A\rightarrow X\rightarrow B$ that factor $f$.   Suppose there exists an $n\geq 0$ for which we have an equivalence
\[  |F(X\otimes_A sk_n \Delta^k)| \simeq  |\Gamma_\infty^fF(X\otimes_A sk_n\Delta^k)| \]
for all cosimplicial degrees $k\geq 0$.  Then $$\Tot |F(X\otimes_A sk_n \Delta^\bullet)| \simeq V_\infty F(f:A\rightarrow B).$$
}
 \vskip .1 in

The paper is organized as follows.  In Section 2, we review basic results about simplicial and cosimplicial objects, and prove Proposition \ref{p:Tot}.  In addition, we review some ideas concerning pro-equivalences of towers associated to cosimplicial objects.   We review the construction and basic properties of the Taylor tower of \cite{BJM} in Section 3.  We then use this Taylor tower  to define the varying center towers $\{V_nF\}$ described earlier in this introduction.  We finish Section 3 by explaining why the de Rham complex can be treated as a varying center tower for the functor $\mathcal U$.  
We devote the final section of the paper to proofs of Theorems \ref{t:infinity} and \ref{t:connected}, and Corollary \ref{p:analyticprep}.  
 \vskip .1 in 
 \noindent {\bf  Acknowledgments:}   We are indebted to Philip S. Hirschhorn for providing us with Lemma \ref{p:Phil} (found in \cite{Hirschhorn}) which allowed us to complete the proof of  Proposition \ref{p:fix}. The authors were able to meet and work together several 
times because of the generosity and hospitality of the following:  the Midwest Topology Network (funded by NSF grant DMS-0844249), the Union College Faculty Research Fund, the Pacific Institute for the Mathematical Sciences, and the mathematics department of the  University of Illinois at Urbana-Champaign.   We thank them for their support.     We would like to thank Charles Rezk for several helpful conversations.  We also 
thank Tom Goodwillie for his ideas that contributed to the development of Section 3.

%%%%%%%%%%%%%%%%%%%%%%%%%%%%%%%%%%%%%%%%%%%%

\section{Cosimplicial objects and $\Tot$}

%%%%%%%%%%%%%%%%%%%%%%%%%%%%%%%%%%%%%%%%%%%%%%

In this paper,  we  provide a combinatorial model for the limit of a certain tower of degree $n$ approximations to a homotopy functor.   In this section, we provide relevant background on simplicial and cosimplicial objects.  We demonstrate how the empty set can be obtained from $n$-connected spaces in a systematic way in Proposition \ref{p:Tot}.  This seemingly simple fact about cosimplicial spaces leads to the equivalence of Theorem \ref{t:connected}.  We review towers of objects associated to cosimplicial objects, and introduce pro-equivalences between these as our preferred notion of ``equivalence" for towers.  We finish the section by recording a useful lemma concerning multicosimplicial objects.  For an introduction to simplicial and cosimplicial objects, the reader is referred to  \cite{BK} and \cite{GJ}. 

Let $\bf \Delta$ be the category of non-empty finite totally ordered sets and order-preserving functions.    A cosimplicial object in any category $\C$ is a covariant functor $X^{\bullet}:{\bf \Delta} \to \C$.  Let $X^m$  denote the value of $X^\bullet$ at $[m]=\{ 0, 1, \cdots , m\}$. 
 Morphisms of cosimplicial objects in $\C$ are natural transformations of functors.   The category of cosimplicial objects and natural transformations between them is denoted ${\C}^{\bf \Delta}$.
Dually, a simplicial object is a contravariant functor $Y_{*}:{\bf \Delta} \to \C$, and its $k$th object is $Y_k$.  
In this section the category $\C$ will be one of simplicial sets,  
spaces, or spectra, ${\s}$, depending on the context.  The model category of (unpointed) simplicial sets will be the usual one, as in \cite[Definition 7.10.7]{H}.  We require our category of (unpointed) topological spaces to satisfy the criteria set out in \cite[7.10.2]{H}, and we use the model category structure explained in \cite[Definition 7.10.6]{H}.  By spectra, we mean a suitable model category of spectra with symmetric monoidal smash product, such as the ones defined in \cite{HSS} or \cite{EKMM}.

We make use of the standard cosimplicial simplicial set 
$\Delta_{*}^{\bullet}$ given by:
\[ \Delta^n_k = {\rm hom}_{\bf \Delta} ([k], [n]).\]  By taking the geometric realization of this cosimplicial simplicial set in each cosimplicial degree we obtain a cosimplicial space which we denote $\Delta^{\bullet}$.  

\begin{defn}  The  totalization of any cosimplicial object $X^\bullet$ is
\[ \Tot X^\bullet := {\operatorname{holim}}_{\bf \Delta} X^{\bullet}.\]
\end{defn}

We take this as the definition of the totalization because it is  homotopy invariant.  We will assume that in this paper we have selected functorial homotopy limits everywhere satisfying the conditions enumerated in \cite[Lemma 2.5]{BJM}.  In many situations, it is convenient to use a {\it particular} model for the homotopy limit defining $\Tot$ (see, e.g. \cite{BK}).  This leads to an alternate description of the totalization as
\[ \tot X^\bullet := {\rm Hom}_{\C^\Delta}(\Delta^\bullet, X^\bullet),\]
where $\rm Hom$ denotes the appropriate function complex. 
In particular, if $X$ is a fibrant object in $\C^\Delta$ (with the Reedy model structure, see \cite{H}), there is a weak equivalence $\Tot X^\bullet \simeq\tot X^\bullet$ (as in \cite[Lemma 2.12]{GJ}).  The latter part of this equivalence is often taken as the definition of the totalization.

We can filter $\bf \Delta$ by full subcategories ${\bf \Delta}_r$ consisting of sets with at most $r+1$ elements.  The inclusion  $i_r:{\bf \Delta}_r\to {\bf \Delta}$ induces a truncation functor  $\C^{\bf \Delta} \to \C^{{\bf \Delta}_r}$ given by sending $X^\bullet$ to its restriction $X^\bullet\circ i_r$.  The totalization of $X$ inherits a filtration from $\bf \Delta$.

\begin{defn} 

The $r$th totalization of a cosimplicial object $X^\bullet$ is
\[\Tot^r X^\bullet := \holim_{{\bf \Delta}_r} (X^\bullet\circ i_r) . \]  
\end{defn}

The functors $\Tot^r$ assemble into a tower $\Tot^r X^\bullet \to \Tot^{r-1} X^\bullet$ for $r\geq1$.  
 We denote this tower by
\[ \tower X^\bullet := \{ \Tot^r X^\bullet \}_{r\geq 0}.\]  We set
\[ \tot^r X^\bullet := \operatorname{hom}_{\C^{\bf \Delta}} ( \sk_r\Delta^\bullet,  X^\bullet),\]
where  $\sk_n Y_*$ denotes the $n$-skeleton of a simplicial space, as in  section IV.3.2 of \cite{GJ}.
This is a model for $\Tot^r X^\bullet$ in the same way that $\operatorname{tot} X^\bullet$ is a model for $\Tot X^\bullet$.  In particular, if $X^\bullet$ is  Reedy fibrant we have $\operatorname{tot}^r X^\bullet \simeq \Tot^r X^\bullet$.

To compare two cosimplicial objects $X^\bullet$ and $Y^\bullet$, we could ask when a map $f:X^\bullet \to Y^\bullet$ yields an {\it equivalence}.  In the Reedy model structure on cosimplicial spaces or spectra, weak equivalences are defined to be levelwise equivalences $X^n \to Y^n$ for each $n\geq 0$.   This notion is too strong for our purposes.  We could instead ask that the associated map  $\Tot X^\bullet \to \Tot Y^\bullet$ is a weak equivalence in the category $\C$.  However, this notion misses the topology coming from the inverse limit tower.  To incorporate the structure of the $\Tot$-tower in equivalences of  cosimplicial objects, we want equivalences which are weaker than Reedy equivalences but stronger than  equivalences in $\C$.  
\begin{defn} \label{d:pro} \cite[Chapter III]{BK}  A map of towers $\{f_s\}: \{ A_s\} \to \{B_s\}$ in any category is a pro-isomorphism if for each $s$ there is a $t$ and a map $B_{s+t}\to A_s$ making the following diagram commute:
\[ \xymatrix{ A_{s+t}\ar[r]^{f_{s+t}} \ar[d] & B_{s+t} \ar[d] \ar[dl] \\ A_s \ar[r]_{f_s} & B_s}.\]  For pointed towers of connected spaces or spectra, a map of towers is a weak pro-homotopy equivalence if it induces a pro-isomorphism of sets on $\pi_0$ and a pro-isomorphism of groups on $\pi_n$ for each $n\geq q$.  
\end{defn}
We will shorten ``weak pro-homotopy equivalence" to ``pro-equivalence".  A pro-equivalence of $\Tot$-towers $\tower X^\bullet \to \tower Y^\bullet$ automatically induces a weak equivalence  of objects $\Tot X^\bullet \simeq \Tot Y^\bullet$ (see \cite[Chapter III Prop. 2.6]{BK}), but need not have $\Tot^r X^\bullet \simeq \Tot^r Y^\bullet$ for any $r$.  
On the other hand, if there is a weak equivalence $\Tot^r X^\bullet \simeq \Tot^r Y^\bullet$ for all $r\geq n$, then the towers $\tower X^\bullet$ and $\tower Y^\bullet$ are automatically pro-equivalent.

The next few results show that the cosimplicial spaces $\sk_n \Delta^\bullet$ approximate the empty set by $n$-connected spaces in a systematic way.  

\begin{lem}\label{l:empty} Let $n\geq 0$.  The space $\operatorname{tot}^{k} \sk_n\Delta^\bullet$ satisfies \[\operatorname{tot}^k \sk_n \Delta^\bullet=\emptyset \] for all $k\geq n+1$. \end{lem}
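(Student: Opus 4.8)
The plan is to unwind the definition $\operatorname{tot}^k \sk_n\Delta^\bullet = \hom_{\C^{\bf\Delta}}(\sk_k\Delta^\bullet, \sk_n\Delta^\bullet)$ and show that this function complex is empty whenever $k \geq n+1$, i.e.\ that there are \emph{no} maps of cosimplicial simplicial sets from $\sk_k\Delta^\bullet$ to $\sk_n\Delta^\bullet$. First I would record the basic observation that $\sk_n\Delta^\bullet$, in cosimplicial degree $m$, is the $n$-skeleton of the standard simplex $\Delta^m$; in particular, in cosimplicial degrees $m \le n$ it is all of $\Delta^m$, while in higher cosimplicial degrees it omits the top nondegenerate simplices. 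The key structural fact is that a map $\sk_k\Delta^\bullet \to \sk_n\Delta^\bullet$ of cosimplicial objects is determined by where it sends the ``universal'' simplex: since $\sk_k\Delta^\bullet$ is generated as a cosimplicial simplicial set by the fundamental $k$-simplex $\iota_k \in \Delta^k_k$ sitting in cosimplicial degree $k$ (together with its cosimplicial structure maps), a natural transformation is the same as a choice of its image $g(\iota_k) \in (\sk_n\Delta^k)_k$ that is compatible with all coface and codegeneracy maps.

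Next I would argue that there is \emph{no} element of $(\sk_n\Delta^k)_k$ that can be the image of $\iota_k$. When $k \geq n+1$, the simplicial set $\sk_n\Delta^k$ is the $n$-skeleton of $\Delta^k$, which contains no nondegenerate simplices in dimension $k$ (indeed none above dimension $n$); so every element of $(\sk_n\Delta^k)_k$ is a degenerate $k$-simplex, i.e.\ of the form $s_j(\tau)$ for some $j$ and some $\tau$ in dimension $k-1$. But $\iota_k \in \Delta^k_k$ is nondegenerate, and naturality with respect to codegeneracy maps $[k] \to [k-1]$ in $\bf\Delta$ forces $g$ to carry degeneracy relations compatibly; the cleanest way to pin this down is to use that $\iota_k$ is characterized by the property that $d_i \iota_k$ for $i=0,\dots,k$ are the codegeneracy images of distinct faces, and to track that a naturality square with the $i$th coface $[k-1]\to[k]$ would force $\iota_{k-1} \in \Delta^{k-1}_{k-1}$ to map into $(\sk_n\Delta^{k-1})_{k-1}$ as well — and then iterate downward until one reaches cosimplicial degree $n+1$, where $(\sk_n\Delta^{n+1})_{n+1}$ still contains only degenerate simplices while $\iota_{n+1}$ is nondegenerate. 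Concretely: a cosimplicial map must send the nondegenerate generator in cosimplicial degree $n+1$, namely $\iota_{n+1}$, to a nondegenerate element of $(\sk_n\Delta^{n+1})_{n+1}$ if it is to be injective on that free part, but there are none, and a short check shows the naturality with codegeneracies from degree $n+1$ to degree $n$ rules out the degenerate choices as well.

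I expect the main obstacle to be making the ``generated by the fundamental simplex'' reasoning precise in the cosimplicial world: one must be careful that a natural transformation of functors $\bf\Delta \to (\text{simplicial sets})$ is genuinely determined by its value on $\iota_k$ for all $k$ simultaneously, and that the constraints from \emph{both} cofaces and codegeneracies interact correctly. The slick route is to use the co-Yoneda / adjunction description: $\sk_k\Delta^\bullet$ corepresents (via $\hom_{\C^{\bf\Delta}}(\sk_k\Delta^\bullet, -)$) the functor sending $X^\bullet$ to $\operatorname{tot}^k X^\bullet$, and under this $\operatorname{tot}^k X^\bullet$ is computed as an equalizer over the $k$-truncation, so $\operatorname{tot}^k\sk_n\Delta^\bullet$ in simplicial degree $0$ is the set of $0$-simplices of the $k$-truncated limit of $\sk_n\Delta^\bullet$; one then observes this limit set is empty because the codegeneracy maps in $\sk_n\Delta^\bullet$, restricted to the relevant truncation, have incompatible images once $k$ exceeds $n$. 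Either way, once the reduction to ``is there a nondegenerate $k$-simplex of $\sk_n\Delta^k$'' is in place, the conclusion is immediate from the definition of the skeleton, and the lemma — together with the already-stated Proposition \ref{p:Tot} in the excerpt — follows.
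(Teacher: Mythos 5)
Your proposal misreads which category the lemma lives in, and this matters for whether the argument can work at all. In the paper's notation $\Delta^\bullet$ (without the subscript $*$) is the cosimplicial \emph{space} obtained by realizing $\Delta^\bullet_*$ levelwise, so $\tot^k \sk_n\Delta^\bullet = \hom_{\C^{\bf \Delta}}(\sk_k\Delta^\bullet, \sk_n\Delta^\bullet)$ is a space of \emph{continuous} cosimplicial maps; this is exactly the form in which the lemma is invoked in the proof of Proposition \ref{p:Tot}, where one passes to $\tot^k|\sk_n\Delta^\bullet_*|$. Your argument is entirely combinatorial: it analyzes natural transformations of cosimplicial \emph{simplicial sets}, determined by the image of the fundamental simplex $\iota_k$, and tries to rule out the (necessarily degenerate) candidate images. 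Even if completed, that would only show there are no simplicial cosimplicial maps; a point of $\tot^k\sk_n\Delta^\bullet$ is a compatible family of continuous maps $\alpha_m\colon \sk_k\Delta^m\to\sk_n\Delta^m$, which need not be simplicial, so emptiness of the simplicial hom does not give the statement. The paper's proof is topological precisely for this reason: compatibility with the cofaces forces the restriction of $\alpha_{n+1}$ to $\partial\Delta^{n+1}\cong S^n$ to be homotopic to the identity, while $\alpha_{n+1}$ is defined on all of $\Delta^{n+1}\cong D^{n+1}$, hence nullhomotopic on the boundary, contradicting that the identity of $S^n$ is essential. Your nondegeneracy bookkeeping has no counterpart for continuous maps, so it cannot replace this step.

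Even granting the simplicial-set reading, the core of your argument is asserted rather than proved. You correctly reduce to the question of which $x\in(\sk_n\Delta^k)_k$ can serve as the image of $\iota_k$, and you correctly note that for $k\geq n+1$ every candidate is degenerate; but the claim that degenerate candidates are impossible is left as ``a short check,'' and the one justification offered --- that the map ``must be injective on that free part'' --- is not a requirement of cosimplicial maps at all: simplicial maps routinely send nondegenerate simplices to degenerate ones. The actual obstruction comes from compatibility with \emph{all} cofaces simultaneously, not primarily from codegeneracies. For instance, with $n=0$ and $k=1$, any simplicial map $\sk_1\Delta^m\to\sk_0\Delta^m$ is constant at some vertex $v(m)$ since $\sk_1\Delta^m$ is connected, and naturality against every coface $d^i\colon[m]\to[m+1]$ forces $v(m+1)=d^i(v(m))$ for all $i$, which is impossible because $d^0(v)=v+1$ while $d^{m+1}(v)=v$. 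An argument of this kind (or the paper's homotopical one) is what must replace your assertion; as written, the decisive step is missing, on top of the simplicial-versus-topological mismatch above.
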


\begin{proof} An element of ${\rm tot}^ksk_n\Delta^{\bullet}$ can be treated as a sequence of maps 
\[\alpha _i: \sk_k\Delta ^i\rightarrow {\operatorname{sk}}_n\Delta ^i
\] that commutes with the cosimplicial face and degeneracy maps.   We claim that no such sequence can exist when $k\geq n+1$.   To see why, suppose that we have a commuting diagram   
\begin{equation} \label{e:emptyproof} \xymatrix{ \Delta^{n} \ar[r]^{d^i} \ar[d]^{\alpha_{n}} & \Delta^{n+1} \ar[d]^{\alpha_{n+1}} \\ \sk_n \Delta^{n} \ar[r]^{d^i} & \sk_n \Delta^{n+1} }\end{equation}
where the horizontal maps $d^i$ are any of the $n+2$ cosimplicial face maps.   Since the $\alpha_{i}$'s are required to commute with the cosimplicial structure maps,  $\alpha_{n+1}$ restricted to the boundary of $\Delta^{n+1}$ is required to be homotopic to the identity map.  However, this is impossible since $\alpha_{n+1}$ extends to $\Delta^{n+1} \cong D^{n+1}$, meaning that $\alpha_{n+1}$ is contractible.  Since there are no contractible maps whose image is all of the $n$-sphere $sk_n\Delta^{n+1}$, this shows that \\ $\operatorname{hom}_{\C^{\bf \Delta}}(\Delta^\bullet, \sk_n \Delta^\bullet)$ is empty.  

Since $\sk_k \Delta^n = \Delta^n$ whenever $k\geq n$, the same argument also shows that $\operatorname{hom}_{\C^{\bf \Delta}}(\sk_k\Delta^\bullet, \sk_n \Delta^\bullet)$ is empty whenever $k\geq n+1$.

\end{proof}

\begin{prop}\label{p:Tot}  For each $n\geq 0$, the cosimplicial simplicial sets $\sk_n \Delta_*^\bullet$ are pro-equivalent to the empty set, thought of as a constant cosimplicial simplicial set with value $\emptyset$.  Moreover, we have
\[ \Tot^k \sk_n \Delta_*^\bullet = \emptyset \]
whenever $k\geq n+1$.
\end{prop}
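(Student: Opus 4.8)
The plan is to prove the ``moreover'' clause first and read off the pro-equivalence from it. For the pro-equivalence: once we know $\Tot^k\sk_n\Delta_*^\bullet=\emptyset$ for $k\geq n+1$, we compare $\sk_n\Delta_*^\bullet$ with the constant cosimplicial simplicial set at $\emptyset$; the homotopy limit over the nonempty category ${\bf \Delta}_r$ of the constant diagram at $\emptyset$ is again $\emptyset$, so $\Tot^r\sk_n\Delta_*^\bullet=\emptyset=\Tot^r\emptyset$ for all $r\geq n+1$, and the observation recorded after Definition \ref{d:pro} (a levelwise equivalence of all sufficiently large $\Tot^r$'s forces a pro-equivalence of the $\tower$-towers) gives the result.

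For the ``moreover'' clause the obstacle is that $\sk_n\Delta_*^\bullet$ is \emph{not} Reedy fibrant when $n\geq 1$ --- its value $\partial\Delta^{n+1}$ at $[n+1]$ is not a Kan complex --- so Lemma \ref{l:empty}, which computes the strict $\tot^k$, does not compute the homotopy-invariant $\Tot^k$. I would fix a Reedy fibrant replacement $j\colon\sk_n\Delta_*^\bullet\xrightarrow{\sim}R^\bullet$ (a levelwise weak equivalence with $R^\bullet$ Reedy fibrant), giving $\Tot^k\sk_n\Delta_*^\bullet\simeq\tot^k R^\bullet=\hom_{\C^{\bf \Delta}}(\sk_k\Delta_*^\bullet,R^\bullet)$, and then show that this function complex is \emph{empty} for $k\geq n+1$; since the only simplicial set weakly equivalent to $\emptyset$ is $\emptyset$ itself, this proves $\Tot^k\sk_n\Delta_*^\bullet=\emptyset$. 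The key input, used repeatedly, is that $\sk_j\Delta_*^m=\Delta_*^m$ whenever $m\leq j$. In particular the truncation of $\sk_n\Delta_*^\bullet$ to ${\bf \Delta}_n$ equals that of the levelwise contractible $\Delta_*^\bullet$, so $\Tot^n\sk_n\Delta_*^\bullet\simeq\ast$ and hence the Kan complex $\tot^n R^\bullet=\hom_{\C^{\bf \Delta}}(\sk_n\Delta_*^\bullet,R^\bullet)$ --- a function complex with Reedy cofibrant source and Reedy fibrant target --- is connected. Therefore any map $p\colon\sk_n\Delta_*^\bullet\to R^\bullet$ is homotopic to $j$, so its degree-$(n+1)$ component $p^{n+1}\colon\partial\Delta^{n+1}\to R^{n+1}$ is homotopic to $j^{n+1}$, which is a weak equivalence (as $j$ is a levelwise equivalence); thus $p^{n+1}$ is a weak equivalence and $R^{n+1}\simeq\partial\Delta^{n+1}\simeq S^n$. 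Now suppose $g\colon\sk_k\Delta_*^\bullet\to R^\bullet$ existed for some $k\geq n+1$. Restricting $g$ along $\sk_n\Delta_*^\bullet\hookrightarrow\sk_k\Delta_*^\bullet$ yields such a $p$, and since $\sk_k\Delta_*^{n+1}=\Delta_*^{n+1}$ (as $k\geq n+1$) the component $p^{n+1}$ is the restriction to $\partial\Delta^{n+1}$ of $g^{n+1}\colon\Delta_*^{n+1}\to R^{n+1}$. Hence $p^{n+1}$ is at once a weak equivalence $\partial\Delta^{n+1}\to R^{n+1}$ and null-homotopic (it factors through the contractible $\Delta^{n+1}$), which forces $S^n$ to be weakly contractible --- false for every $n\geq 0$. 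So no such $g$ exists, $\tot^k R^\bullet=\emptyset$, and $\Tot^k\sk_n\Delta_*^\bullet=\emptyset$ for all $k\geq n+1$.

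The step I expect to be the real obstacle is the set-up rather than the punchline: recognizing that $\Tot^k$ cannot be computed from Lemma \ref{l:empty} as stated and that one must pass to a Reedy fibrant replacement, and then verifying that $\tot^n R^\bullet$ is a \emph{connected} Kan complex --- which is exactly where the levelwise contractibility of $\Delta_*^\bullet|_{{\bf \Delta}_n}$ enters. After that, the contradiction above is just the ``no retraction of a simplex onto its boundary'' phenomenon from the proof of Lemma \ref{l:empty}, transported across the equivalence $j$. The case $n=0$ fits the same template --- $R^1\simeq\partial\Delta^1$ is disconnected, while a map from the connected $\Delta^1$ cannot be a $\pi_0$-isomorphism --- and is in any event immediate, since $\sk_0\Delta_*^\bullet$ is levelwise discrete, hence already Reedy fibrant, so that Lemma \ref{l:empty} applies directly.
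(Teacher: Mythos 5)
Your proof is correct, but it takes a genuinely different route from the paper's. You stay entirely inside cosimplicial simplicial sets: having correctly identified the failure of Reedy fibrancy (exactly the point of the remark following Proposition \ref{p:Tot}), you pass to a Reedy fibrant replacement $j\colon \sk_n\Delta^\bullet_*\to R^\bullet$, use the levelwise contractibility of the restriction to ${\bf\Delta}_n$ to see that $\tot^n R^\bullet$ is a connected Kan complex, deduce that every vertex has $(n+1)$-st component a weak equivalence onto $R^{n+1}\simeq \partial\Delta^{n+1}\simeq S^n$, and then derive the disk-versus-sphere contradiction for any hypothetical point of $\tot^k R^\bullet$ with $k\geq n+1$; the pro-equivalence clause is then read off from the observation after Definition \ref{d:pro}, just as in the paper. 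The paper instead handles the fibrancy problem by applying geometric realization: realization commutes with the finite homotopy limits $\Tot^k$, and by Hirschhorn's Theorem 19.8.4(2) the homotopy-invariant $\Tot^k$ of the cosimplicial space $|\sk_n\Delta^\bullet_*|=\sk_n\Delta^\bullet$ agrees with the strict $\tot^k$, so Lemma \ref{l:empty} applies verbatim and one concludes by noting that a simplicial set with empty realization is empty. What your argument buys is independence from the passage to topological spaces and from the cited commutation results (realization with $\Tot^k$, and Hirschhorn's theorem); the cost is that you must redo the sphere/disk obstruction of Lemma \ref{l:empty} in a homotopy-invariant form, transported across $j$, which requires the extra bookkeeping about connectivity of $\tot^n R^\bullet$ and Reedy cofibrancy of $\sk_n\Delta^\bullet_*$. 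The paper's route buys a very short proof that reuses the strict computation of Lemma \ref{l:empty} unchanged, at the price of importing those two facts about realization. Both arguments use the same endgame, namely that the only object weakly equivalent to $\emptyset$ is $\emptyset$ itself, and both rely on the homotopy invariance of the chosen functorial $\holim$, so neither has an advantage there.
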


\begin{rem} Note that $\sk_n \Delta_*^\bullet$ is not a fibrant cosimplicial simplicial set.  Therefore Lemma \ref{l:empty} and Proposition \ref{p:Tot} are distinct statements. \end{rem}

\begin{proof}[Proof of Proposition \ref{p:Tot}]  
Suppose that $X_*$ is a  simplicial set.  To show that $X_*= \emptyset$, it suffices to show that the realization of the simplicial set  $|X_*|= \emptyset$.  This is sufficient because the realization of a non-empty simplicial set must be non-empty.  We apply this to the simplicial set $X_*=\Tot^k \sk_n \Delta_*^\bullet$.   
The geometric realization commutes with finite limits (\cite[Chapter I, Prop. 2.4]{GJ} or \cite[Prop. 3.2.3]{Hovey}), and by using Reedy fibrant replacement one sees that it also commutes with finite homotopy limits such as $\Tot^k$.
 
  We obtain
\[|\Tot^k\sk_n\Delta^{\bullet}_*|\simeq \Tot^k|\sk_n\Delta^{\bullet}_*|.
\] 
%All topological spaces are fibrant, so in particular $|\sk_n \Delta^i_*|$ is fibrant for each $i$.
By \cite[Theorem 19.8.4(2)]{H} we have a natural weak equivalence
\[\Tot^k|\sk_n\Delta^{\bullet}_*|\simeq \tot^k|\sk_n\Delta^{\bullet}_*|=\tot^k \sk_n\Delta^\bullet.
\]
By Lemma \ref{l:empty}, the right hand side is equal to $\emptyset$.
The conclusion follows by noting that the only space weakly equivalent to the empty set is the empty set itself.
\end{proof}

We end this section with a kind of cosimplicial Eilenberg-Zilber Theorem.  
A $k$-multicosimplicial object in $\C$ is a functor $X$ from 
${\bf \Delta}^{\times k}$, the $k$-fold product of the category $\bf \Delta$ with itself, to $\C$.   
The diagonal cosimplicial object, $\diag X^\bullet$, is obtained from $X$ by precomposing with the diagonal functor $ {\bf\Delta} \to {\bf\Delta}^ {\times k}.$  Let $\Tot^j_i$  
denote the homotopy limit $\holim_{{\bf \Delta}_j} X$ taken in the $i$-th variable only.  

\begin{lem}\label{p:Phil}\cite{Hirschhorn}  The diagonal functor preserves Reedy fibrations.  In particular, if $X$ is a Reedy fibrant multicosimplicial object of $\C$, then $\diag X$ is a Reedy fibrant cosimplicial object of $\C$.
\end{lem}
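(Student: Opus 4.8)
The plan is to reduce the claim to a statement about matching objects. Recall that, for a cosimplicial object $Y^\bullet$ in a model category $\C$, the $m$-th matching object $M^m Y$ is the limit over the category of codegeneracy maps out of $[m]$, and $Y^\bullet$ is Reedy fibrant precisely when each natural map $Y^m \to M^m Y$ is a fibration. For a $k$-multicosimplicial object $X$, the analogous condition (Reedy fibrant as a functor on ${\bf \Delta}^{\times k}$, with the product Reedy structure) says that for each multi-index $(m_1,\dots,m_k)$ the map from $X^{(m_1,\dots,m_k)}$ to the matching object $M^{(m_1,\dots,m_k)}X$ is a fibration, where this matching object is the limit over the product of the codegeneracy categories. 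So the first step is to write down, for the diagonal $\diag X$, what $M^m(\diag X)$ is: it is the limit of $\diag X$ over the codegeneracy maps $[m]\to[\ell]$ with $\ell<m$, i.e. the limit of $X^{(\ell,\dots,\ell)}$ over those maps, with the structure maps of $X$ in each of the $k$ slots acting simultaneously.

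The key step is then to compare the map $(\diag X)^m = X^{(m,\dots,m)} \to M^m(\diag X)$ with the multicosimplicial matching map $X^{(m,\dots,m)} \to M^{(m,\dots,m)}X$. The diagonal functor ${\bf \Delta}\to{\bf \Delta}^{\times k}$ sends the category of codegeneracies out of $[m]$ into the category of codegeneracies out of $(m,\dots,m)$ (a codegeneracy in ${\bf \Delta}$ maps to a tuple of codegeneracies), and I would show that $M^m(\diag X)$ is obtained from $M^{(m,\dots,m)}X$ by a finite sequence (or a single iterated step) of pullbacks along latching-type maps — equivalently, that the map $X^{(m,\dots,m)}\to M^m(\diag X)$ factors as $X^{(m,\dots,m)}\to M^{(m,\dots,m)}X \to M^m(\diag X)$ where the second map is a base change of products and pullbacks of the maps $X^{(\cdots)}\to M^{(\cdots)}X$ in the various slots. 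The cleanest route is induction on $k$: for $k=1$ there is nothing to prove; for the inductive step, factor the diagonal ${\bf \Delta}\to{\bf \Delta}^{\times k}$ through ${\bf \Delta}\times{\bf \Delta}^{\times(k-1)}$, use that diagonalizing the last $k-1$ variables preserves Reedy fibrations by hypothesis (so $X$ becomes Reedy fibrant as a bicosimplicial object), and then reduce to the case $k=2$. For $k=2$, one checks directly that the matching object of the diagonal of a bicosimplicial object $Z^{\bullet,\bullet}$ at level $m$ is built by pullbacks from $Z^{m,m}\to M^m_1 Z^{\bullet,m}$ and $Z^{m,m}\to M^m_2 Z^{m,\bullet}$ together with the matching maps of the already-matching objects; since pullbacks and finite products of fibrations are fibrations, the composite is a fibration.

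The main obstacle I anticipate is the bookkeeping in identifying $M^m(\diag X)$ with an explicit iterated pullback of the partial matching objects $M^{m}_i$ — that is, getting the indexing category of codegeneracies-out-of-$[m]$ to match up, under the diagonal, with the product indexing category in a way that exhibits the comparison map as a base change of fibrations rather than merely a map between limits. This is essentially a cofinality/Reedy-combinatorics argument: one must verify that the relevant functor between the codegeneracy diagrams is such that the induced map on limits is a composite of pullbacks along the latching maps of $\diag X$, so that fibrancy is inherited. Once that identification is in hand, the conclusion is immediate from the two-out-of-three-free facts that (i) a pullback of a fibration is a fibration and (ii) a finite product of fibrations is a fibration. (Alternatively, if one is willing to invoke it, this is exactly the content of the reference \cite{Hirschhorn}, and the above is the shape of the argument found there.)
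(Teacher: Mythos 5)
A preliminary remark for calibration: the paper contains no proof of this lemma at all --- it is imported from \cite{Hirschhorn} (see the acknowledgments), so your closing parenthetical, ``invoke the reference,'' is in fact exactly what the paper does. Judged as a free-standing argument, your plan gets the framework right: Reedy fibrancy of $\diag X$ is detected by the maps $X^{(m,\dots,m)}\to M^m(\diag X)$; the product Reedy structure on $\C^{{\bf\Delta}^{\times k}}$ agrees with the Reedy structure on $(\C^{{\bf\Delta}})^{{\bf\Delta}^{\times(k-1)}}$, so your induction reducing to the bicosimplicial case is legitimate; and the factorization $X^{(m,\dots,m)}\to M^{(m,\dots,m)}X\to M^m(\diag X)$ does exist, since the diagonal sends a non-identity surjection out of $[m]$ to a tuple of non-identity surjections. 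In the lowest nontrivial case it even works as you expect: for $k=2$, $m=1$ the matching map factors as $X^{1,1}\to X^{0,1}\times_{X^{0,0}}X^{1,0}\to X^{0,0}$, and the second map is a base change of $X^{0,1}\to X^{0,0}$ followed by $X^{1,0}\to X^{0,0}$, both fibrations by Reedy fibrancy in each variable separately.

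The genuine gap is that everything beyond this is asserted rather than proved, and what is asserted is precisely the content of the lemma. For $m\geq 2$ the index category of $M^m(\diag X)$ is the category of non-identity surjections $[m]\to[\ell]$ embedded diagonally in the matching category of ${\bf\Delta}^{\times k}$ at $(m,\dots,m)$; the two limits are taken over genuinely different diagrams, and there is no evident sense in which the comparison map $M^{(m,\dots,m)}X\to M^m(\diag X)$ (equivalently, the matching map of $\diag X$) is ``a base change of products and pullbacks'' of the matching maps of $X$. Exhibiting it as an iterated pullback or composite of fibrations requires a filtration of these index categories by partial matching objects and an induction on cosimplicial degree --- this is exactly the technical work that constitutes \cite{Hirschhorn}, not bookkeeping dischargeable by ``one checks directly,'' and your own summary (``once that identification is in hand, the conclusion is immediate'') concedes that the missing identification is the theorem. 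Two smaller inaccuracies, not fatal to the plan but symptomatic: the maps relevant to fibrancy are matching maps, not ``latching-type'' maps; and the matching category of the product Reedy structure consists of tuples of codegeneracies that are not all identities, which is not literally the product of the codegeneracy categories.
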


This lemma implies that the diagonal is well-behaved with respect to taking homotopy limits.  In particular, it implies that if $X\simeq Y$ is a Reedy weak equivalence of Reedy fibrant multicosimplicial objects, then 
\[ \holim_I \diag X \simeq \holim_I \diag Y\]
for any indexing category $I$.

The following lemma appears as \cite[Lemma 3.3]{Rosona}, where it is proved in the case where $\C$ is the category of simplicial sets.  The proof presented there works for any category $\C$ enriched over simplicial sets.

\begin{lem}\label{l:Rosona}\cite[Lemma 3.3]{Rosona} Let $X$ be a $k$-multicosimplicial object in $\C$.  Then 
\[ \Tot_1^{n_1} \cdots \Tot_k^{n_k} X \simeq \Tot^{N} \diag (\csk_1^{n_1}\cdots \csk_k^{n_k} X)\]
where $N= n_1 + \cdots + n_k$ and $\csk_i^{n_i} X$ is the $n_i$-th coskeleton of $X$ in the $i$-th variable for all $1\leq i \leq k$.
\end{lem}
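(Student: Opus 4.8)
The plan is to reduce both sides of the asserted equivalence to the same explicit function complex and then verify a combinatorial identity of multicosimplicial simplicial sets. First I would reduce to the case that $X$ is Reedy fibrant. Then, by Lemma~\ref{p:Phil} together with the standard facts that the coskeleton functors are right Quillen for the Reedy model structures (so preserve Reedy fibrant objects) and act on disjoint variables, the cosimplicial object $\diag(\csk_1^{n_1}\cdots\csk_k^{n_k}X)$ is Reedy fibrant, so that $\tot^N$ computes $\Tot^N$ on it; similarly, applying a partial totalization in one variable preserves Reedy fibrancy in the remaining variables (because $\sk_m\Delta^\bullet$ is Reedy cofibrant), so iterating gives $\Tot_1^{n_1}\cdots\Tot_k^{n_k}X \simeq \hom_{\C^{{\bf\Delta}^{\times k}}}(\sk_{n_1}\Delta^\bullet\boxtimes\cdots\boxtimes\sk_{n_k}\Delta^\bullet,\ X)$, where $\boxtimes$ is the external product of cosimplicial simplicial sets (so that the weight evaluates at $([p_1],\dots,[p_k])$ to $\sk_{n_1}\Delta[p_1]\times\cdots\times\sk_{n_k}\Delta[p_k]$) and $\sk_{n_i}\Delta^\bullet$ is as in the definition of $\tot^{n_i}$.

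Next I would rewrite both function complexes as function complexes over the truncated product category ${\bf\Delta}_{n_1}\times\cdots\times{\bf\Delta}_{n_k}$. Let $\delta\colon{\bf\Delta}\to{\bf\Delta}^{\times k}$ denote the diagonal and $\iota\colon{\bf\Delta}_{n_1}\times\cdots\times{\bf\Delta}_{n_k}\hookrightarrow{\bf\Delta}^{\times k}$ the inclusion. Using the enriched adjunction $\delta_!\dashv\delta^*=\diag$, together with the identification of the composite $\csk_1^{n_1}\cdots\csk_k^{n_k}$ with $\iota_*\iota^*$, the right-hand side becomes $\hom_{\C^{{\bf\Delta}_{n_1}\times\cdots\times{\bf\Delta}_{n_k}}}\!\bigl(\iota^*\delta_!\sk_N\Delta^\bullet,\ \iota^*X\bigr)$. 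On the left-hand side, $\sk_m\Delta^\bullet$ equals the left Kan extension along ${\bf\Delta}_m\hookrightarrow{\bf\Delta}$ of the truncation $\Delta^\bullet|_{{\bf\Delta}_m}$, so $\hom_{\C^{\bf\Delta}}(\sk_m\Delta^\bullet,-)$ depends only on the restriction to ${\bf\Delta}_m$; applying this in each variable turns the left-hand side into $\hom_{\C^{{\bf\Delta}_{n_1}\times\cdots\times{\bf\Delta}_{n_k}}}\!\bigl(\iota^*(\Delta^\bullet\boxtimes\cdots\boxtimes\Delta^\bullet),\ \iota^*X\bigr)$. It therefore suffices to show that the multicosimplicial simplicial sets $\delta_!\sk_N\Delta^\bullet$ and $\Delta^\bullet\boxtimes\cdots\boxtimes\Delta^\bullet$ agree after restriction to ${\bf\Delta}_{n_1}\times\cdots\times{\bf\Delta}_{n_k}$.

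The combinatorial heart of the argument is the computation of $\delta_!\sk_N\Delta^\bullet$. Writing $\sk_N\Delta^\bullet$ itself as the left Kan extension of $\Delta^\bullet|_{{\bf\Delta}_N}$ along ${\bf\Delta}_N\hookrightarrow{\bf\Delta}$ and composing Kan extensions, $\delta_!\sk_N\Delta^\bullet$ is the left Kan extension of $\Delta^\bullet|_{{\bf\Delta}_N}$ along the restricted diagonal ${\bf\Delta}_N\to{\bf\Delta}^{\times k}$, and a co-Yoneda computation of this colimit identifies its value at $([p_1],\dots,[p_k])$ with the $N$-skeleton of the product simplicial set $\Delta[p_1]\times\cdots\times\Delta[p_k]$: an $r$-simplex $(\psi_1,\dots,\psi_k)$ of the product lies in $\delta_!\sk_N\Delta^\bullet([p_1],\dots,[p_k])$ precisely when the joint image $\{(\psi_1(i),\dots,\psi_k(i)):i\in[r]\}$ has at most $N+1$ elements, which is exactly the condition that the nondegenerate part of $(\psi_1,\dots,\psi_k)$ have dimension at most $N$. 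The numerical observation that makes everything fit is then immediate: when $p_i\le n_i$ for all $i$ one has $\sk_{n_i}\Delta[p_i]=\Delta[p_i]$, while $\dim\bigl(\Delta[p_1]\times\cdots\times\Delta[p_k]\bigr)=p_1+\cdots+p_k\le n_1+\cdots+n_k=N$, so $\sk_N\bigl(\Delta[p_1]\times\cdots\times\Delta[p_k]\bigr)=\Delta[p_1]\times\cdots\times\Delta[p_k]$; hence both restricted weights equal $\iota^*(\Delta^\bullet\boxtimes\cdots\boxtimes\Delta^\bullet)$ and the two function complexes coincide. I expect the main obstacle to be precisely this co-Yoneda identification of $\delta_!\sk_N\Delta^\bullet$ with the skeleton of a product of simplices, together with the Reedy-fibrancy bookkeeping of the first paragraph that licenses computing the homotopy-invariant totalizations by strict function complexes; everything else is a formal manipulation of enriched adjunctions, which is exactly why the argument goes through unchanged for any category $\C$ enriched and cotensored over simplicial sets, as in \cite{Rosona}.
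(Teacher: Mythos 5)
The paper does not actually prove this lemma: it is imported from Eldred's thesis \cite[Lemma 3.3]{Rosona}, with only the remark that the argument there works for any simplicially enriched $\C$. So there is no in-paper proof to match yours against; judged on its own terms, your argument is correct and is in the same skeleton/coskeleton--adjunction spirit as the cited one. The formal transpositions are all valid: $\tot^m(-)=\hom(\sk_m\Delta^\bullet,-)$ with $\sk_m\Delta^\bullet=j_!j^*\Delta^\bullet$, the identification $\csk_1^{n_1}\cdots\csk_k^{n_k}=\iota_*\iota^*$, and the weighted-limit adjunctions for $\delta_!\dashv\delta^*$ and $\iota^*\dashv\iota_*$ (all strict isomorphisms of the function complexes, by the usual end/coend calculus, valid once $\C$ is complete and cotensored over simplicial sets). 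Your combinatorial heart is also right: $\delta_!\sk_N\Delta^\bullet$ is the colimit-preserving functor in the simplicial-set variable agreeing with $[m]\mapsto\sk_N\Delta[m]$ on representables, evaluated at $\Delta[p_1]\times\cdots\times\Delta[p_k]$, hence equals $\sk_N(\Delta[p_1]\times\cdots\times\Delta[p_k])$; and for $p_i\le n_i$ the dimension bound $p_1+\cdots+p_k\le N$ makes this the full product, which is exactly what $\prod_i\sk_{n_i}\Delta[p_i]$ restricts to. So both sides become $\hom_{\C^{{\bf\Delta}_{n_1}\times\cdots\times{\bf\Delta}_{n_k}}}(\iota^*(\Delta^\bullet\boxtimes\cdots\boxtimes\Delta^\bullet),\iota^*X)$, as you claim.

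Two caveats. First, the opening move ``reduce to the case that $X$ is Reedy fibrant'' is not free: the right-hand side uses the underived coskeleton and diagonal, so it is not a priori invariant under replacing $X$ by a Reedy fibrant model. What your argument honestly establishes is a strict isomorphism $\tot_1^{n_1}\cdots\tot_k^{n_k}X\cong\tot^N\diag(\csk_1^{n_1}\cdots\csk_k^{n_k}X)$ for all $X$, and the stated equivalence of $\Tot$'s for Reedy fibrant $X$; that is precisely the generality in which the paper uses the lemma (Proposition \ref{p:fix} assumes Reedy fibrancy), but you should state the hypothesis rather than phrase it as a reduction. Second, the fibrancy of $\diag$ of a Reedy fibrant multicosimplicial object is exactly Hirschhorn's Lemma \ref{p:Phil}, which is not formal bookkeeping --- you are right to invoke it explicitly rather than fold it into the ``standard facts.''
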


These two lemmas immediately imply the following proposition, which is a cosimplicial version of the Eilenberg-Zilber Theorem.

\begin{prop} \label{p:fix}  Let $X$ be a Reedy fibrant $k$-multicosimplicial object of $\C$ such that
\[ X\simeq \csk_1^{n_1}\cdots \csk_k^{n_k} X\]
is a Reedy weak equivalence.  Then 
\[ \Tot_1^{n_1} \cdots \Tot_k^{n_k} X \simeq \Tot^{N} \diag X\]
where $N=n_1+\cdots + n_k$.
\end{prop}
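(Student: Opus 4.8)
The plan is to deduce Proposition \ref{p:fix} directly from the two lemmas immediately preceding it, with essentially no new work beyond tracking hypotheses through functorial homotopy limits. The statement is, after all, advertised as an "immediate" consequence, so the proof proposal is really a matter of assembling the pieces in the right order and checking that the Reedy-fibrancy hypotheses propagate correctly.

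First I would invoke Lemma \ref{l:Rosona} applied to the given $k$-multicosimplicial object $X$, which yields the weak equivalence
\[ \Tot_1^{n_1}\cdots \Tot_k^{n_k} X \simeq \Tot^{N}\diag(\csk_1^{n_1}\cdots \csk_k^{n_k} X), \qquad N = n_1 + \cdots + n_k. \]
Next I would use the hypothesis that $X \simeq \csk_1^{n_1}\cdots\csk_k^{n_k} X$ is a Reedy weak equivalence of $k$-multicosimplicial objects, together with $X$ being Reedy fibrant, to conclude that $\diag(\csk_1^{n_1}\cdots\csk_k^{n_k}X) \simeq \diag X$ is a weak equivalence whose target $\diag X$ is Reedy fibrant (by Lemma \ref{p:Phil}). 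Here I would want to note that the iterated coskeleton $\csk_1^{n_1}\cdots\csk_k^{n_k}X$ is itself Reedy fibrant — coskeleta preserve Reedy fibrancy, and the hypothesis that the comparison map is a Reedy weak equivalence lets us pass the equivalence through $\diag$ and then through $\Tot^N = \holim_{\bf\Delta}$, using that homotopy limits preserve weak equivalences (this is precisely the remark following Lemma \ref{p:Phil}, which says that a Reedy weak equivalence of Reedy fibrant multicosimplicial objects induces an equivalence on $\holim_I\diag(-)$ for any indexing category $I$; take $I = {\bf\Delta}$). Composing the two displayed equivalences gives $\Tot_1^{n_1}\cdots\Tot_k^{n_k}X \simeq \Tot^N\diag X$, as claimed.

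The only place where there is anything to be careful about — and hence the "main obstacle," though it is a mild one — is verifying that all the objects appearing as sources and targets of the comparison maps are Reedy fibrant, so that the chosen functorial homotopy limits detect the weak equivalences without needing a fibrant replacement that might disturb the diagonal. Specifically one must check that $\csk_i^{n_i}$ preserves Reedy fibrancy of multicosimplicial objects (so that the iterated coskeleton is Reedy fibrant), and that Lemma \ref{p:Phil} then applies to both $\diag X$ and $\diag(\csk_1^{n_1}\cdots\csk_k^{n_k}X)$. Once that bookkeeping is in place, the conclusion is formal: weak equivalences between Reedy-fibrant (multi)cosimplicial objects are sent to weak equivalences by $\Tot^N\diag(-)$, and the two lemmas splice together to give the result. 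I would present this as a short three-or-four-line proof citing Lemmas \ref{p:Phil} and \ref{l:Rosona} and the remark on homotopy-limit invariance that follows Lemma \ref{p:Phil}.
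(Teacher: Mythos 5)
Your argument is exactly the paper's: the proposition is stated there as an immediate consequence of Lemmas \ref{p:Phil} and \ref{l:Rosona}, with the Reedy weak equivalence $X\simeq \csk_1^{n_1}\cdots\csk_k^{n_k}X$ passed through $\diag$ and the (homotopy-invariant) homotopy limit, just as you do. The only nitpick is that $\Tot^N$ is $\holim$ over the truncated category ${\bf\Delta}_N$ rather than ${\bf\Delta}$, but since the remark after Lemma \ref{p:Phil} holds for any indexing category this does not affect your proof.
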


%%%%%%%%%%%%%%%%%%%%%%%%%%%%%%%%%%%%%%%%%%%%%%%%%%

\section{Calculus for functors of $\AC$}

%%%%%%%%%%%%%%%%%%%%%%%%%%%%%%%%%%%%%%%%%%%%%%%%%%

The main results of this paper, which we prove in Section 4, are stated in terms of  the varying center tower for functors whose source categories are  categories whose objects live naturally {\it under} a fixed initial object.  The primary goal of this section is to define these towers and explain how they are related to Taylor towers, i.e., the towers of functors described in the second paragraph of the introduction.   Existing models for  Taylor towers use  categories whose objects live naturally {\it over} a fixed terminal object.  The definition of the varying center tower will use the discrete Taylor tower $\{ \Gamma_n^f F\}$ of \cite{BJM}, which is defined for functors whose source categories consist of objects that factor a fixed morphism, $f$.  To construct the varying center tower, we combine the discrete Taylor towers corresponding to all possible values of $f$.  In Section 3.1 we  review  these discrete Taylor towers.  

Let $\AC$ be the category of objects in a simplicial model category $\C$ under a fixed object $A$, with morphisms given by commuting triangles.  As part of the process of extending the construction of Section 3.1 to the category $\AC$, in Section 3.2 we explain how the new varying center tower serves as an analogue of the series obtained by fixing the variable of a Taylor series and allowing the center of expansion to vary.  The construction of the varying center tower is carried out in Section 3.3.   We conclude the section by describing how  the de Rham complex can be viewed as an example of one of these varying center towers.

Throughout this section, we let $\C$ be a simplicial model category, $\s$ be a category of spectra, and  
$\star$ be the initial/final object in $\s$.   We assume that all functors are homotopy functors; that is, that they preserve weak equivalences.   

\subsection {Taylor towers for $\Cf$}
Let $f:A\to B$ be a morphism in $\C$ and let $\Cf$ be the category whose objects factor the map $f$.  The objects of $\Cf$ are triples $(X, \alpha_X, \beta_X)$ where $X$ is an object of $\C$ and $\alpha_X: A\to X$ and $\beta_X:X\to B$ are morphisms such that $\beta_X\circ \alpha_X= f$.  When the maps $\alpha_X$ and $\beta_X$ are understood, we  use $X$ to denote the triple $(X, \alpha_X, \beta_X)$.  Morphisms are given by the obvious commuting diagrams.  In order for our constructions to be homotopy invariant, we assume that all objects of $\Cf$ are cofibrant, i.e., that $f:A\to B$ is a cofibration in $\C$ and each $\alpha_X$ is a cofibration.

To review  existing Taylor towers for functors from $\Cf$, we begin by summarizing the two notions of polynomial degree $n$ functors considered in \cite{BJM} and \cite{G2}, respectively.    For full details, see  Sections 3.4 and 4 of \cite{BJM} and Section 3 of \cite{G2}.   The 
first notion depends on cross effects functors.  Let $\coprod_A$ denote the coproduct in $\Cf$.  Recall that an $n$-cubical diagram in a category $\D$ is a functor from the power set $P({\bf n})$ of ${\bf n}=\{ 1, \ldots , n\}$ to $\D$.

\begin{defn}\label{d:xfx} \cite[Definition 3.4]{BJM} Let ${\bf X}$ be an $n$-tuple $(X_1, \ldots , X_n)$ of objects in $\Cf$.  
\begin{itemize}
\item Let $(\coprod_n)_B^{\bf X}:P({\bf n})\to \s$ be the $n$-cubical diagram defined on $S\subseteq {\bf n}$ by  
\[ \label{e:coprod_n} (\coprod_n)_B^{{\bf X}}(S) = X^1_B(S)\coprod_A \cdots \coprod_A X^n_B(S)\]
where $X^i_B(S) = X_i$ when $i\notin S$ and $X^i_B(S)=B$ otherwise.  If $S\subset T$, the morphism $(\coprod_n)_B^{{\bf X}}(S\subset T)$
is induced by the maps $\beta_{X_i}$  on summands $X^i_B(S)$ with  $i\in T-S$ and identity maps on summands with $i\in S$.

\item If $F:\Cf \to \s$, the $n$th cross effect of $F$ relative to $f$ evaluated at ${\bf X}$, $cr_n^fF(X_1,\dots, X_n)$,  is  the iterated homotopy fiber (see \cite[Definition 3.2]{BJM}) of the cubical diagram obtained by applying $F$  to $(\coprod_n)_B^{\bf X}.$ 
\end{itemize}
Since  the $n$-cubical diagram  $(\coprod_n)_B^{{\bf X}}$ and the homotopy fiber are functorial in ${\bf X}$, this defines a functor $cr_n^fF:{\sc C}_f^{\times n} \rightarrow \s$, called the $n$th cross effect functor.
\end{defn}

When $n=2$, the 2-cubical diagram $(\coprod_2)_B^{{\bf X}}$ of Definition \ref{d:xfx} is simply the square diagram
\[ \xymatrix{ X_1 \coprod_A X_2 \ar[r]^{1\coprod\beta_{X_2}} \ar[d]_{\beta_{X_1}\coprod 1} &X_1\coprod_A B \ar[d]^{\beta_{X_1}\coprod 1}\\ 
B\coprod_A X_2 \ar[r]_{1\coprod \beta_{X_2}} &B\coprod_A B}\]
and $cr_2F(X_1, X_2)$ is the iterated homotopy fiber of the diagram obtained by applying  $F$ to this square.

\begin{defn} \label{d:rel-f} \cite[Definition 3.21]{BJM} A functor $F:{\Cf} \rightarrow \s$ is {\it degree $n$ relative to $f:A\rightarrow B$} provided that $cr_{n+1}^fF\simeq \star$.   
We note that this condition was simply called ``degree $n$" in \cite{BJM}.   
\end{defn}

This is a weaker condition than the $n$-excisive condition satisfied by the $n$th term 
in Goodwillie's Taylor tower, defined as follows.

\begin{defn}  \label{d:n-excisive} \cite[Definition 3.1]{G2}
A functor $F:{\sc D}\rightarrow \s$, where ${\sc D}$ is $\C, \AC,$ or $\Cf$, is $n$-excisive provided that when $F$ is applied to any strongly cocartesian $(n+1)$-cubical diagram, $\chi: P({\bf n})\to \sc D$, the result is a homotopy cartesian diagram in $\s$.    
\end{defn} 
The properties of degree $n$ relative to $f$ and $n$-excisive are related by the following 
definition and proposition.  

\begin{defn}   \cite[Definition 4.2]{BJM}
A functor $F:{\sc D}\rightarrow \s$, where $\sc D$ is $\C$, $\AC$ or $\Cf$,  is $n$-excisive {\it relative to $A$} provided that  when $F$ is applied to any strongly cocartesian $(n+1)$-cubical diagram $\chi: P({\bf n})\to \sc D$ {\it with $\chi(\emptyset)=A$},  the result is a homotopy cartesian diagram in $\s$.    \end{defn}

\begin{prop} \cite[Propositions 4.3, 4.11]{BJM} \label{p:deg-exc} Let $f:A\to B$, and let $F:\Cf\to \s$.  \begin{enumerate}
 \item The functor  $F$ is 
degree $n$ relative to $f$ if and only if $F$ is $n$-excisive relative to $A$.   
\item   If $F$ commutes with 
realizations, that is, if the natural map $|F(X_{\cdot})|\rightarrow F(|X_{\cdot}|)$ is an equivalence for all simplicial objects $X_{\cdot}$, then $F$ is $n$-excisive relative to $A$ if and only if $F$ is $n$-excisive.   
\end{enumerate}
\end{prop}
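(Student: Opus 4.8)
The plan is to obtain both parts from the analysis of cross effects and strongly cocartesian cubes carried out in \cite[\S 3.4, \S 4]{BJM}; what follows is a sketch of the shape of that argument, together with an indication of where the real work lies. Throughout, recall that $F$ is degree $n$ relative to $f$ exactly when $cr_{n+1}^fF\simeq\star$, equivalently when $F$ carries the cube $(\coprod_{n+1})_B^{\bf X}$ to a homotopy cartesian cube for every ${\bf X}$.

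For part (1), the starting point is that the defining cube $(\coprod_{n+1})_B^{\bf X}$ of the cross effect is \emph{strongly cocartesian}: each of its faces is a pushout along $A$. Hence, if $F$ is $n$-excisive relative to $A$, the only obstruction to the vanishing of $cr_{n+1}^fF$ is that the initial vertex of this cube is $X_1\coprod_A\cdots\coprod_A X_{n+1}$ rather than $A$. I would remove that obstruction by first proving the relative analogue of the standard fact that an $n$-excisive functor kills \emph{all} strongly cocartesian cubes of dimension $\geq n+1$: if $F$ is $n$-excisive relative to $A$, then $F$ carries to a homotopy cartesian cube every strongly cocartesian $(n+1)$-cube whose vertices are $A$-coproducts of the $X_i$ and copies of $B$. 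This is an induction on $n+1$ via the decomposition of an $(n+1)$-cube as a map of $n$-cubes in a chosen direction; the point special to $\Cf$ is that, since every object sits between $A$ and $B$, the $n$-cubical faces produced by the induction can be re-expressed as cubes with initial vertex $A$, so the inductive hypothesis applies. The converse—degree $n$ relative to $f$ implies $n$-excisive relative to $A$—runs in the opposite direction: given a strongly cocartesian $(n+1)$-cube $\chi$ with $\chi(\emptyset)=A$, so that $\chi(S)\simeq\coprod^A_{i\in S}\chi(\{i\})$, one builds a cube of cubes whose slices are cross-effect cubes $(\coprod_{n+1})_B^{\bf X}$, from which the vanishing of $cr_{n+1}^fF$ forces $F(\chi)$ to be cartesian.

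For part (2), one implication needs nothing: ``$n$-excisive'' asks that $F$ take every strongly cocartesian $(n+1)$-cube to a cartesian cube, and the cubes with $\chi(\emptyset)=A$ are among these, so $n$-excisive $\Rightarrow$ $n$-excisive relative to $A$ unconditionally. For the reverse, assume $F$ is $n$-excisive relative to $A$ and commutes with realization, and let $\chi$ be an arbitrary strongly cocartesian $(n+1)$-cube. By part (1) it suffices to exploit the vanishing of $cr_{n+1}^fF$. The bridge is a simplicial (bar-type) resolution that expresses the total fiber of $F(\chi)$ as the geometric realization of a simplicial spectrum whose $k$-simplices are total fibers of $F$ applied to strongly cocartesian $(n+1)$-cubes with initial vertex equivalent to $A$—built from the iterated coproducts along $A$ that already appear in $cr_{n+1}^f$. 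Each of these vanishes by the relative hypothesis, and since $F$ commutes with geometric realization and realization commutes with the finite homotopy limit computing a total fiber, $F(\chi)$ is cartesian. Chaining the two parts gives $n$-excisive relative to $A$ $\Leftrightarrow$ degree $n$ relative to $f$, and, for $F$ commuting with realization, also $\Leftrightarrow$ $n$-excisive.

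I expect the main obstacle to be the combinatorial bookkeeping in part (1): making precise the ``cube of cross-effect cubes'' comparison and, dually, the inductive step, which in the relative setting cannot proceed by simply discarding faces (as one may in the absolute case of \cite{G2}) but must use that objects of $\Cf$ factor $f$. In part (2) the delicate point is the interchange of geometric realization with total fibers in the chosen model category of spectra and the verification that the resolution genuinely produces cubes with initial vertex equivalent to $A$; this is exactly the step that fails without the realization hypothesis, and it is where that hypothesis is spent in \cite{BJM}.
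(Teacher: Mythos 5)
The paper does not actually prove this proposition: it is imported verbatim from \cite[Propositions 4.3, 4.11]{BJM}, so the only ``proof'' here is the citation, and your write-up is likewise an outline that defers the substantive steps to \cite{BJM} rather than a proof. Your framing of where the content lies is accurate --- the cross-effect cube $(\coprod_{n+1})_B^{\bf X}$ is strongly cocartesian but has initial vertex $X_1\coprod_A\cdots\coprod_A X_{n+1}$ rather than $A$, one implication of (2) is immediate since cubes with $\chi(\emptyset)=A$ are among all strongly cocartesian cubes, and the realization hypothesis is indeed what powers the converse in (2). But the two places where you describe an actual mechanism do not hold up as stated. In part (1), the proposed induction ``via the decomposition of an $(n+1)$-cube as a map of $n$-cubes,'' with the inductive hypothesis applied to the resulting $n$-cubical faces, does not typecheck: $n$-excisiveness relative to $A$ gives no information about $n$-cubes (that would be $(n-1)$-excision), and $F$ and its degree are fixed, so there is no quantity on which such an induction can run. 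The genuine content --- a concrete cube-level comparison transporting vanishing between cubes with initial vertex $A$ and the cross-effect cubes, in both directions (your ``cube of cross-effect cubes'' is only asserted) --- is exactly what the proof in \cite{BJM} supplies and what is missing here.

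Similarly, in part (2) the entire weight rests on the claimed bar-type resolution expressing the total fiber of $F(\chi)$, for an arbitrary strongly cocartesian $(n+1)$-cube $\chi$, as the realization of a simplicial spectrum whose levels are total fibers of $F$ on strongly cocartesian cubes with initial vertex equivalent to $A$. You do not construct this simplicial object, and it is not clear what it would be: a levelwise initial vertex $A$ cannot simply realize to an arbitrary $\chi(\emptyset)$ without a specific device, and identifying that device (and checking that realization interchanges with the finite homotopy limit computing the total fiber, which is fine in spectra) is the proof of \cite[Proposition 4.11]{BJM}, not a routine verification. So the proposal correctly locates the difficulties but leaves both of them unresolved; as it stands it is a plan for reading the proofs in \cite{BJM}, not an independent argument. (A small wording point: the $2$-faces of $(\coprod_{n+1})_B^{\bf X}$ are pushouts because $\coprod_A$ commutes with pushouts factorwise, not ``pushouts along $A$.'')
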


The functor $\perp_nF$ obtained by precomposing $cr_{n+1}^fF$ with the diagonal, $\perp_n^fF(X) := cr_n^fF(X, \ldots , X)$, forms a cotriple (or comonad) on the  category of homotopy functors from $\Cf$ to $\s$ \cite[Theorem 3.17]{BJM}. Standard results about cotriples yield a simplicial spectrum $(\perp_{n+1}^f)^{*+1}F(X)$ whose $k$th spectrum is given by iterating the diagonal cross effects construction $k+1$ times.  One can then define
\[ \Gamma_n^fF(X) \, := \, \operatorname{hocof} \left( |(\perp_{n+1}^f)^{*+1}F(X)| \to F(X) \right) \]
as in \cite[Definition 5.3]{BJM}.  These functors assemble into the tower in Diagram (\ref{e:taylortower}) of functors and natural transformations \cite[Theorem 5.8]{BJM} and satisfy:
%\begin{itemize}
%{\color{red} Theorems 5.4, 5.6 and 5.8 }of \cite{BJM}  indicate that the functors $\Gamma_n^f F$ assemble into a tower
%The following theorem constructs the expected Taylor tower of functors which are degree $n$ relative to $f$.
%
%\begin{thm}\label{t:tower} (\cite {BJM}, 5.4, 5.6, 5.8)  Let $F:\Cf \rightarrow \s$.   
%There is a tower 
%of functors and natural transformations:
%\[ \xymatrix{ && F \ar[dl]_{\gamma_{n+1}^f}\ar[d]^{\gamma_n^f}\ar[dr]^{\gamma_{n-1}^f}\quad \\
%\cdots \ar[r] & \Gamma_{n+1}^fF \ar[r]^{q_{n+1}^f} & \Gamma_n^fF \ar[r]^{q_n^f\quad} & \Gamma_{n-1}^fF \ar[r] ^ \cdots \ar[r] & \Gamma_1^fF \ar[r] & \Gamma_0^fF}\]
%where each
\begin{enumerate}
\item $\Gamma _n^fF$ is degree $n$ relative to $f$ \cite[Proposition 5.4]{BJM},  
\item  $\Gamma_n^fF$ is universal, up to weak equivalence, among functors that are  degree $n$ relative to $f$ and have natural transformations from $F$ \cite[Proposition 5.6]{BJM}.   
\end{enumerate}

%The homotopy inverse limit of this tower is denoted $\Gamma _{\infty}^fF$.
%\end{thm}

 We now turn our attention to Goodwillie's construction.  In \cite[Section 6.1]{BJM} we extended  Goodwillie's construction 
of Taylor towers for functors of spaces or spectra \cite[Section 1]{G3}  to functors $F:\Cf \rightarrow 
\s$ as follows.   
For any finite set $U$  of cardinality $u$ and object $X$ in $\Cf$, let $B\otimes _X U$ be the homotopy colimit over $u$ copies of  the map $\beta_X:X\to B$ out of a single domain $X$.   This is a generalization of the fiberwise join construction found in \cite[Section 1]{G3},  denoted there as $X*_B U$.   The $n$th term in Goodwillie's Taylor tower is given by
\begin{equation} \label{e:tom's} P_nF(X) := \operatorname{hocolim}_k T^k_nF(X) \end{equation}
where $T_nF(X) := \operatorname{holim}_{U\in P_0({\bf n+1})} F(B\otimes _XU)$ and the homotopy inverse limit is taken over $P_0({\bf n+1})$, the partially ordered set of non-empty subsets of $\{ 1, \ldots , n+1\}$.  %For further details, including a construction of the maps involved in this homotopy limit construction, see Section 6 of \cite{BJM}.

When evaluated at the initial object, $A$,  of $\Cf$, we show that $\Gamma_n^fF(A)$ agrees with Goodwillie's $n$-excisive polynomial functor $P_nF(A)$.
\begin{thm}   \cite[Corollary 6.8, Theorem 6.9]{BJM}  Let $F:\Cf \rightarrow \s$.   There is a weak homotopy equivalence of spectra $\Gamma _n^fF(A)\simeq 
P_nF(A)$.   Moreover, if $F$ commutes with realization, then for any object $X$ in $\Cf$, there is a weak homotopy equivalence of spectra
$\Gamma _n^fF(X)\simeq P_nF(X)$.   
\end{thm}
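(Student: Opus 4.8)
The plan is to establish the two assertions by different routes. For the equivalence $\Gamma_n^fF(A)\simeq P_nF(A)$ at the initial object I would argue combinatorially, comparing the two constructions directly; for the equivalence $\Gamma_n^fF(X)\simeq P_nF(X)$ at an arbitrary $X$, under the realization hypothesis, I would deduce it from the universal properties recorded in \S3.1 together with Proposition \ref{p:deg-exc}.

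The starting point for the first assertion is that the fiberwise join out of the initial object $A$ is an iterated coproduct: since $f\colon A\to B$ is a cofibration and $A$ is the (cofibrant) initial object of $\Cf$, the homotopy colimit $B\otimes_A U$ of $|U|$ copies of $f$ is the iterated pushout $B\coprod_A\cdots\coprod_A B$ with $|U|$ factors. Consequently the $(n+1)$-cube $U\mapsto B\otimes_A U$ on $P(\mathbf{n+1})$ is naturally isomorphic to the coproduct cube $(\coprod_{n+1})_B^{(A,\dots,A)}$ of Definition \ref{d:xfx}, the slots labelled $A$ contributing nothing. Applying $F$ and taking total homotopy fibers then identifies $\hofib\bigl(F(A)\to T_nF(A)\bigr)$ with $cr_{n+1}^fF(A,\dots,A)=\perp_{n+1}^fF(A)$, and the same identification, applied with $T_n^kF$ in place of $F$, computes each layer of Goodwillie's tower $\{T_n^kF(A)\}_{k}$. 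I would then use an induction on the number of iterations to match this tower with the tower of partial realizations of the cotriple resolution $(\perp_{n+1}^f)^{*+1}F$ evaluated at $A$, so that in the limit $\operatorname{hocolim}_k T_n^kF(A)=P_nF(A)$ agrees with $\operatorname{hocof}\bigl(|(\perp_{n+1}^f)^{*+1}F(A)|\to F(A)\bigr)=\Gamma_n^fF(A)$. Conceptually this asserts that at the initial object the two standard resolutions compute the same polynomial approximation; what makes it go through without an analyticity hypothesis is that every cube in sight is an iterated sum cube, on which the higher cross effects that would obstruct the comparison vanish formally.

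For the second assertion I would first check that $\Gamma_n^fF$ commutes with geometric realization whenever $F$ does: the construction of $\Gamma_n^fF$ uses only $F$, coproducts, geometric realization, and the total homotopy fibers defining the cross effects, and in the stable category $\s$ the total homotopy fiber of a finite cube agrees up to equivalence with its total homotopy cofiber, so each of these operations commutes with $|\cdot|$. By Proposition \ref{p:deg-exc}(2) this makes $\Gamma_n^fF$ an $n$-excisive functor. Since an $n$-excisive functor is in particular degree $n$ relative to $f$, the universal property of $\gamma_n^f\colon F\to\Gamma_n^fF$ shows that $\Gamma_n^fF$ is initial, up to weak equivalence, among $n$-excisive functors equipped with a natural transformation from $F$; the same holds for $P_nF$ by Goodwillie's argument \cite{G3} as extended to $\Cf$ in \cite[Section 6.1]{BJM}, the realization hypothesis being exactly what supplies the connectivity estimates needed there. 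Two initial objects of this kind are canonically equivalent, so $\Gamma_n^fF\simeq P_nF$ as functors on $\Cf$, and in particular $\Gamma_n^fF(X)\simeq P_nF(X)$ for every object $X$.

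The step I expect to be the main obstacle is the inductive comparison of the two resolutions at $A$: iterating the fiberwise-join construction introduces folding maps and hence genuine suspensions, so the objects that appear differ from those produced by iterating the coproduct cross effect, and one must nonetheless show that the assembled homotopy colimits coincide. This reduces to a careful analysis of the extra degeneracies carried by the sum cubes and of how they interact with the cotriple structure on $\perp_{n+1}^f$. A secondary technical point is verifying that $\Gamma_n^fF$ inherits commutation with realization from $F$, since this is what allows Proposition \ref{p:deg-exc}(2) to be applied in the second route.
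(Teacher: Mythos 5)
Your opening observation is the right one, and it is indeed the linchpin of the comparison at the initial object: for $X=A$ the join cube $U\mapsto B\otimes_AU$ is exactly the coproduct cube $(\coprod_{n+1})_B^{(A,\dots,A)}$ of Definition \ref{d:xfx}, so $\operatorname{hofib}\bigl(F(A)\to T_nF(A)\bigr)\simeq\perp_{n+1}^fF(A)$. But the heart of the first assertion is then missing, and you have in effect flagged it yourself without supplying it. The layers do not iterate the way your sketch needs: the fiber of $T_n^kF(A)\to T_n^{k+1}F(A)$ is $\perp_{n+1}(T_n^kF)(A)$, not $(\perp_{n+1}^f)^{k+1}F(A)$, and using the join identity $(A*_BU)*_BV\cong A*_B(U*V)$ one sees that $T_n^kF(A)\simeq\operatorname{holim}_{(U_1,\dots,U_k)}F\bigl(B\otimes_A(U_1*\cdots*U_k)\bigr)$, where $U_1*\cdots*U_k$ is a $(k-1)$-dimensional complex; by contrast every entry of the cotriple resolution $(\perp_{n+1}^f)^{*+1}F(A)$ evaluates $F$ only on coproducts $B\otimes_A(\text{finite set})$. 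So there is no levelwise match between $\{T_n^kF(A)\}$ and the skeletal filtration of $\bigl|(\perp_{n+1}^f)^{*+1}F(A)\bigr|$, and the step you defer (``show the assembled homotopy colimits coincide'') is precisely the content of the cited result \cite[Corollary 6.8]{BJM}. Closing it requires a genuine input --- for instance that $P_n$ annihilates $\perp_{n+1}$-objects when evaluated at $A$ (equivalently, $\Gamma_n$-acyclicity of the cotriple resolution), or degree-$n$-ness of $P_nF$ relative to $f$ together with both universal properties --- none of which your proposal produces. As written, the first half is a restatement of the problem rather than a proof.

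For the second assertion, the universal-property comparison is a reasonable strategy, and your check that $\Gamma_n^fF$ inherits commutation with realization (hence is $n$-excisive by Proposition \ref{p:deg-exc}) is essentially sound. But the argument leans on an input you have not secured: that $P_nF$, as defined on $\Cf$ over an arbitrary simplicial model category, is $n$-excisive (or at least degree $n$ relative to $f$) and is universal, with the requisite uniqueness, among $n$-excisive functors under $F$. Your justification --- that ``the realization hypothesis supplies the connectivity estimates'' for Goodwillie's argument --- misidentifies where the hypotheses enter: Goodwillie's proof that $P_nF$ is $n$-excisive is a formal argument about strongly cocartesian cubes and joins, not a connectivity estimate, and the realization hypothesis in the theorem is what allows passage between ``degree $n$ relative to $f$,'' ``$n$-excisive relative to $A$,'' and ``$n$-excisive,'' i.e., it is needed on the $\Gamma_n^f$ side. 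Note finally that the paper you are reading gives no proof of this statement; it quotes \cite[Corollary 6.8, Theorem 6.9]{BJM}, so the standard to meet is to reproduce that argument or an equivalent one, and with the bridging step above unproved and the universal property of $P_n$ in this generality assumed, your proposal does not yet do so.
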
  
This weak homotopy equivalence allows us to apply  results of  \cite{G3} to the functor $\Gamma_nF$; for example we have the following proposition which provides a classification of the homogeneous layers.    
  
  \begin{prop}\label{p:layers} %\cite[Help?]{G3} 
  Let $F:\Cf \rightarrow \s$ and $X$ be an object in $\Cf$.  Suppose that $F$ commutes with the geometric realization functor, or that $X=A$.  Let $\widetilde {\Gamma} _n^fF(X)$ denote  the homotopy fiber of the natural map $q_n^f: \Gamma_n^fF(X)\to \Gamma_{n-1}^fF(X).$ Then there exists a functor of $n$ variables, $\widetilde {L}_n^fF: {\Cf}^{\times n} \to \s$ such that 
  \begin{enumerate}\item  $\widetilde {L}_n^fF$ is degree $1$ relative to $f$ in each variable,
  \item $\widetilde {L}_n^fF$ is $n$-reduced, that is, it is equivalent to $\star$ when evaluated at $B$ in any of its variables, \item  for any permutation $\sigma\in \Sigma _n$, there is a  natural weak equivalence \[\widetilde {L}_n^fF(X_{\sigma(1)}, \dots, X_{\sigma(n)})\simeq \widetilde {L}_n^fF(X_1,\dots, X_n),\]  and 
 \item  when evaluated at $X$, $\widetilde {\Gamma} _n^f F(X)$ is equivalent to 
  \[ [\widetilde {L}_n^fF(X, \dots, X)]_{h\Sigma_n}\]
   where $h\Sigma _n$ denotes the homotopy orbits with respect to the natural $\Sigma _n$ 
  action that permutes the variables of $\widetilde {L}_n^fF$.  
  \end{enumerate}
  
  \end{prop}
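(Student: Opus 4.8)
The plan is to transport the statement across the weak equivalence $\Gamma_k^fF(X)\simeq P_kF(X)$ of the preceding theorem (\cite[6.8, 6.9]{BJM}) and then import Goodwillie's classification of homogeneous functors from \cite{G3}, as suggested above. Under either standing hypothesis --- $F$ commutes with realization, or $X=A$ --- that theorem yields such equivalences for every $k$, and one checks that they are compatible with the structure maps $q_k^f$ on one side and with the structure maps of the Goodwillie tower on the other. Hence $\widetilde{\Gamma}_n^fF(X)$, being the homotopy fiber of $q_n^f$, is identified with the $n$th homogeneous layer $D_nF(X):=\hofib\bigl(P_nF(X)\to P_{n-1}F(X)\bigr)$. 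I would also record, for use below, that $\widetilde{\Gamma}_n^fF$ is degree $n$ relative to $f$: it lies in a fiber sequence between $\Gamma_n^fF$ and $\Gamma_{n-1}^fF$, both of which are degree $n$ relative to $f$ (the latter since degree $n-1$ relative to $f$ implies degree $n$ relative to $f$), and this class of functors is closed under homotopy fibers because the defining condition $cr_{n+1}^f(-)\simeq\star$ is preserved by the finite homotopy limits used to form cross effects.

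Next I would invoke Goodwillie's classification \cite{G3}, now applicable via the equivalence $\widetilde{\Gamma}_n^fF(X)\simeq D_nF(X)$: an $n$-homogeneous functor to spectra is naturally equivalent to $\bigl(C(X,\dots,X)\bigr)_{h\Sigma_n}$, where $C$ is its $n$th cross effect, which is symmetric and multilinear. Translating back, I would set $\widetilde{L}_n^fF:=cr_n^f\widetilde{\Gamma}_n^fF$ (equivalently $cr_n^f\Gamma_n^fF$, since $cr_n^f\Gamma_{n-1}^fF\simeq\star$), which is a functor ${\Cf}^{\times n}\to\s$ by Definition \ref{d:xfx}. Property (4) is then the transported classification formula, the $\Sigma_n$-action being the one permuting the $n$ arguments. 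Property (1), degree $1$ relative to $f$ in each variable, follows because the $n$th cross effect of a functor that is degree $n$ relative to $f$ is degree $1$ relative to $f$ in each variable --- the relative analogue of the corresponding fact in \cite{G3}, reconciled with it via Proposition \ref{p:deg-exc}. Property (2), $n$-reducedness, is immediate from Definition \ref{d:xfx}: setting some argument equal to $B$ makes the corresponding direction of the cube $(\coprod_n)_B^{\bf X}$ consist of identity maps, so its iterated homotopy fiber is $\star$. Property (3), symmetry, holds because a permutation of $(X_1,\dots,X_n)$ merely relabels the coordinate directions of $(\coprod_n)_B^{\bf X}$ and hence induces a weak equivalence on iterated homotopy fibers.

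The step I expect to be the main obstacle is the bookkeeping behind the first paragraph: making the equivalences $\Gamma_k^fF\simeq P_kF$ natural and structured enough --- compatible with tower maps, with the cross-effect/comonad constructions, and with the relevant $\Sigma_n$-actions --- that the homogeneous layer together with its classifying multilinear functor can be transported between the two frameworks without loss of information, and carrying this out in the case $X=A$ without the hypothesis that $F$ commutes with realization, so that every equivalence in play survives the specialization of all objects to the initial object $A$. A secondary point to handle with care is that ``degree $1$ relative to $f$'', rather than Goodwillie's absolute notion of linearity, is the correct conclusion for the cross-effect variables; Proposition \ref{p:deg-exc} is exactly what bridges the two.
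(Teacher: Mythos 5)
Your overall strategy is the same as the paper's: the printed proof consists precisely of the observation that Goodwillie's proofs of Theorem 4.1 and Corollary 4.2 of \cite{G3} apply verbatim to the extended constructions $B\otimes_X U$, $T_nF$ and $P_nF$ in $\Cf$, combined with the comparison $P_nF(X)\simeq \Gamma_n^fF(X)$ valid under either hypothesis. Your verifications of (1)--(3) for a cross-effect model, and your observation that ``degree $1$ relative to $f$'' is the right target via Proposition \ref{p:deg-exc}, are consistent with that.

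The one place where your implementation diverges in a way that matters is the choice $\widetilde{L}_n^fF:=cr_n^f\widetilde{\Gamma}_n^fF$ together with deducing (4) by transport along $\Gamma_k^fF\simeq P_kF$. In the case $X=A$ without the realization hypothesis, that comparison is available only at the single object $A$ (this is exactly the content of \cite[Corollary 6.8]{BJM}), whereas $cr_n^f\widetilde{\Gamma}_n^fF(A,\dots,A)$ is computed from values of $\widetilde{\Gamma}_n^fF$ on the cube $(\coprod_n)_B^{(A,\dots,A)}$, whose entries are objects of the form $B\coprod_A\cdots\coprod_A B$, not $A$. So pointwise agreement at $A$ cannot identify your $\widetilde{L}_n^fF(A,\dots,A)$ with the multilinear functor classifying the Goodwillie layer, and the step you label as ``bookkeeping'' is not repairable by naturality alone along the route you chose. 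The paper avoids this by not insisting on an intrinsic cross-effect model: it runs Goodwillie's argument entirely on the $P_n$-side (for the relative constructions), letting that argument supply the symmetric multilinear functor $\widetilde{L}_n^fF$ with properties (1)--(3), and uses the comparison $P_nF(X)\simeq\Gamma_n^fF(X)$ (compatibly with the tower maps, so that the fibers $\widetilde{\Gamma}_n^fF(X)$ and $\hofib(P_nF(X)\to P_{n-1}F(X))$ agree) only for the final evaluation at the fixed $X$, which is all that statement (4) asserts. If you replace your definition of $\widetilde{L}_n^fF$ by the functor produced on the $P_n$-side, your outline collapses to the paper's proof; as written, the $X=A$ case has a genuine gap.
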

  
\begin{proof} Goodwillie's proof of Theorem 4.1 and Corollary 4.2 in \cite{G3} can be applied to the constructions $Y\otimes _X U$, $T_nF$ and $P_nF$ as we have defined them above.   This proves the stated result since $P_nF(X) \simeq \Gamma_n^fF(X)$ under either hypothesis.
\end{proof}

 %%%%%%%%%%%%%%%%%%%%%%%%%%%%%%%%%%%%%%%%%%%%%%%%%%%%%%
\subsection {Motivation from the Taylor series of  a function}
 %%%%%%%%%%%%%%%%%%%%%%%%%%%%%%%%%%%%%%%%%%%%%%%%%%%%%%
  
  %The category $\Cf$ is  more rigid than what we need:  it has both an initial object and
% a final object, and every object factors a particular map $f$.  It is natural to view the category of  $k$-algebras, the setting for the classical algebraic de Rham complex (see, e.g., Chapter 9 of \cite{weibel}), for a fixed commutative ring $k$  as a category of objects under $k$, but not as a category over a fixed object.   We would like to further generalize the idea of degree $n$ approximations to settings like these,  %However, we still need an initial object  to ``add" over in order to form the coproducts that are the basic building blocks of the degree $n$ polynomial approximations.
%
%so we will consider functors from the category $\AC$. 

 For a morphism $f:A\rightarrow B$, we have defined $\Gamma_n^f F$ to be the  degree $n$ approximation to $F$ relative to $f$.   We consider this to be analogous to a degree $n$ polynomial approximation $T_n^b{\mathfrak f}(x)$ of Equation (\ref{e:taylorpolynomial}) where $A\to X$ plays the role of $x$ and $B$ plays the role of $b$.   The object $B$ should be viewed as a ``center of expansion" for $\Gamma_n^fF$ in the following sense.   In contexts where we can measure the connectivity of a map $X\rightarrow B$ (such as spaces or spectra), for a suitably nice functor $F$, the functor $\Gamma_n^fF$ approximates $F$ in the sense that if the map $X\rightarrow B$ is $k$-connected for $k$ above some constant $\kappa$ that depends on $F$, then the map $F(X)\rightarrow \Gamma_n^fF(X)$ is on the order of $(n+1)k$-connected.  That is, for objects that are within some distance of  $B$ homotopically, $\Gamma_n^fF(X)$ is an approximation to $F(X)$ that improves as $n$ increases.  See, for example, \cite[Proposition 1.6]{G3}.

%For functors of spaces or spectra, the terminal object is the trivial object $\star$, and the connectivity of the map $X\to \star$ measures the connectivity of $X$.  Since convergence of Goodwillie's tower $P_nF$ for analytic functors depends on the connectivity of $X$ (see \cite[Theorem 1.13]{G3}), the terminal object is analogous to the center of expansion of a Taylor series for a function.  To shift our focus to functors from categories which do not have a fixed terminal object, we first considered the analogous situation for a Taylor series of a function in Equation (\ref{e:taylorpolynomial}) of the introduction.

%Let ${\mathfrak f}\in C^\infty({\mathbb R})$.  For any point $b\in {\mathbb R}$, the $n$th Taylor polynomial of $\mathfrak f$ centered at $b$ is 
%\[ T_n^b{\mathfrak f}(x) = \sum_{k=0}^n \frac{{\mathfrak f}^{(k)}(b)(x-b)^k}{k!} .\]
%This is the degree $n$ polynomial approximation of the function ${\mathfrak f}(x)$ near the point $b$.  We expect that for $x$ near $b$,  the Taylor polynomials $T_n^b{\mathfrak f}$ become better and better approximations of the function $\mathfrak f$ as $n$ increases.  

We make three observations about the Taylor series of a function ${\mathfrak f}$ that motivate our
study of polynomial approximations for functors from categories of objects under a fixed initial object.

\begin{Observation}\label{r:Tnb}   In considering $T_n^b{\mathfrak f}$, we can  change our perspective from $x$ to $b$.  That is, we can view $T_n^b{\mathfrak f}(x)$  as a function of $b$ by evaluating at a fixed $x$, say $x=0$ for simplicity:
\[ T_n{\mathfrak f}(b) = T_n^b{\mathfrak f}(0) = \sum_{k=0}^n \frac{{\mathfrak f}^{(k)}(b)(-1)^k(b)^k}{k!} .\]
In Definition \ref{d:varying}, we explain how to implement this observation for functors.
\end{Observation}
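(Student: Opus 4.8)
The plan is simply to unwind the definition of the $n$th Taylor polynomial recorded in Equation~(\ref{e:taylorpolynomial}). First I would recall that, for a fixed smooth function ${\mathfrak f}$ and real numbers $x$ and $b$,
\[ T_n^b{\mathfrak f}(x) = \sum_{k=0}^n \frac{{\mathfrak f}^{(k)}(b)\,(x-b)^k}{k!}, \]
and note that the right-hand side is a perfectly good expression in the two real variables $x$ and $b$ simultaneously, so that freezing $x$ at any particular numerical value is legitimate and produces a smooth function of $b$ alone.

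Next I would set $x = 0$, which replaces $(x-b)^k$ by $(-b)^k = (-1)^k b^k$. Substituting into the sum gives
\[ T_n^b{\mathfrak f}(0) = \sum_{k=0}^n \frac{{\mathfrak f}^{(k)}(b)\,(-1)^k\,b^k}{k!}, \]
which is exactly the asserted expression; writing $T_n{\mathfrak f}(b)$ for this function of $b$ then yields the displayed chain of equalities. The choice $x = 0$ is only a notational convenience — the identical manipulation performed with $(x-b)^k$ left intact records the general fact that fixing any value of $x$ gives a well-defined function of the center $b$.

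There is no genuine obstacle in the argument: it is a one-line substitution, and the content of the observation is conceptual rather than computational. What I would want the surrounding text to make visible is the change of viewpoint itself — the same data $\{{\mathfrak f}^{(k)}(b)\}$ reorganized into a family of functions indexed by the center $b$ rather than by the evaluation point $x$ — since this is precisely the perspective that Definition~\ref{d:varying} categorifies for functors, and (as flagged in Remark~\ref{r:notdegreen}) the resulting functions of $b$ need not be polynomial even though each $T_n^b{\mathfrak f}$ is. Accordingly I would keep the verification itself to the two lines above and spend the remaining text setting up that analogy.
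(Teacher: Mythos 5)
Your proposal is correct and matches the paper's treatment: the Observation carries no separate proof beyond substituting $x=0$ into Equation~(\ref{e:taylorpolynomial}), which replaces $(x-b)^k$ by $(-1)^k b^k$ and yields the displayed formula. Your framing of the conceptual shift from $x$ to $b$ as the real content, later categorified in Definition~\ref{d:varying}, is exactly the point the paper intends.
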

\begin{Observation} \label{r:converge}  When we shift our focus from $x$ to $b$, we observe that 
$T_n{\mathfrak f}(b)$ is not trying to approximate the {\it function} $\mathfrak f$, but only the discrete value ${\mathfrak f}(0)$. In particular, $T_n{\mathfrak f}(b)=f(0)$ for $b\neq 0$ if and only if ${\mathfrak f}^{(n+1)}(b)=0$ for all $b$.  For, if 
\[ {\mathfrak f}(0)=  \sum_{k=0}^n \frac{{\mathfrak f}^{(k)}(b)(-1)^k(b)^k}{k!} \]
then differentiating both sides with respect to $b$ yields
\[ 0 = \sum_{k=0}^n \left( \frac{{\mathfrak f}^{(k+1)}(b)(-1)^kb^k}{k!} + \frac{{\mathfrak f}^{(k)}(b)(-1)^kb^{k-1}}{(k-1)!}\right). \]
This is a telescoping sum, and hence we obtain $\frac{{\mathfrak f}^{(n+1)}(b)(-1)^nb^n}{n!}=0$.  If $b$ is non-zero, then this equation holds if  $f^{(n+1)}(b)=0$.  Thus, $T_n{\mathfrak f}(b) = {\mathfrak f}(0)$ if and only if ${\mathfrak f}$ is a polynomial of degree $n$.  Compare this to Proposition \ref{p:constantconverge}.
\end{Observation}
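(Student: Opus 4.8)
The plan is to prove the equivalence as two implications, treating
\[ \mathfrak{f}(0) = \sum_{k=0}^n \frac{\mathfrak{f}^{(k)}(b)(-1)^k b^k}{k!} \]
as an identity of functions of $b$ for $\mathfrak{f}$ of class $C^{n+1}$, so that differentiation in $b$ and a limiting argument at $b=0$ are both legitimate. For the ``if'' direction I would start from $\mathfrak{f}^{(n+1)}\equiv 0$, which says $\mathfrak{f}$ is a polynomial of degree at most $n$; then Taylor's theorem with remainder (equivalently, uniqueness of the polynomial expansion) shows that the degree-$n$ Taylor polynomial of $\mathfrak{f}$ about any center reproduces $\mathfrak{f}$ exactly, $\mathfrak{f}(x)=\sum_{k=0}^n \frac{\mathfrak{f}^{(k)}(b)}{k!}(x-b)^k$, and setting $x=0$ gives $T_n\mathfrak{f}(b)=\mathfrak{f}(0)$ for every $b$.

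For the converse I would first observe that the summands with $k\geq 1$ vanish at $b=0$ while the $k=0$ summand equals $\mathfrak{f}(0)$, so the hypothesis ``$T_n\mathfrak{f}(b)=\mathfrak{f}(0)$ for $b\neq 0$'' automatically upgrades to an identity valid for all $b$. Differentiating both sides with respect to $b$ and applying the product rule to each summand yields two sums; reindexing one of them by $k\mapsto k-1$ exhibits a telescoping cancellation in which every contribution drops out except $\frac{\mathfrak{f}^{(n+1)}(b)(-1)^n b^n}{n!}$, which must therefore vanish identically. For $b\neq 0$ this forces $\mathfrak{f}^{(n+1)}(b)=0$, and continuity of $\mathfrak{f}^{(n+1)}$ extends this to $b=0$, so $\mathfrak{f}^{(n+1)}\equiv 0$ and $\mathfrak{f}$ is a polynomial of degree at most $n$.

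The one step that needs attention is the telescoping: the $\mathfrak{f}^{(k+1)}$-term obtained by differentiating $\mathfrak{f}^{(k)}(b)$ in the $k$-th summand must be matched against the $\mathfrak{f}^{(k+1)}$-term obtained by differentiating $b^{k+1}$ in the $(k+1)$-st summand, with the signs $(-1)^k$ and the factorials $k!$ versus $(k-1)!$ tracked carefully so that the cancellation is exact. This is pure bookkeeping rather than a genuine difficulty, and there is no analytic subtlety beyond the $C^{n+1}$ hypothesis needed to differentiate and to pass to the limit $b\to 0$.
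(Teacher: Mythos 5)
Your proof is correct and takes essentially the same route as the paper: differentiate the identity $\mathfrak{f}(0)=\sum_{k=0}^n \frac{(-1)^k\mathfrak{f}^{(k)}(b)b^k}{k!}$ in $b$, telescope to isolate $\frac{(-1)^n\mathfrak{f}^{(n+1)}(b)b^n}{n!}=0$, and conclude $\mathfrak{f}^{(n+1)}$ vanishes. The only differences are cosmetic completions of what the paper leaves implicit (the easy direction via Taylor's theorem, the upgrade of the hypothesis to $b=0$, and continuity of $\mathfrak{f}^{(n+1)}$ at the origin), so no changes are needed.
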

\begin{Observation} \label{r:notdegreen}  When considered as a function of $b$ with $x=0$ fixed,
$T_n{\mathfrak f}(b)$ is not necessarily a degree $n$ polynomial function in $b$.   As a simple example, consider the function with $\mathfrak{f}(x)=x^3$.  Then 
\[
T_2{\mathfrak f}(b)=b^3+3b^2(-b)+3b(-b)^2=b^3,
\]
which is a degree 3 polynomial in $b$ rather than the expected quadratic polynomial.
In fact, $T_n{\mathfrak f}(b)$ need not even be polynomial in $b$, as is seen by considering the function $f(x)=e^x$.  Compare this to Example \ref{e:0recoversF}.
\end{Observation}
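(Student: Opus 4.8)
The plan is to substantiate the Observation by means of the two examples it names, using the formula $T_n\mathfrak{f}(b) = \sum_{k=0}^n \mathfrak{f}^{(k)}(b)(-1)^k b^k/k!$ recorded in Observation \ref{r:Tnb}.

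First I would treat $\mathfrak{f}(x) = x^3$ to establish the weaker assertion. Here only the $k = 0, 1, 2$ terms survive, since $\mathfrak{f}^{(k)} \equiv 0$ for $k \geq 3$, and a one-line substitution of $\mathfrak{f}(b) = b^3$, $\mathfrak{f}'(b) = 3b^2$, $\mathfrak{f}''(b) = 6b$ collapses the sum to $b^3$. Since $b^3$ has degree $3 > 2 = n$, this shows $T_n\mathfrak{f}$ need not be a polynomial of degree $\leq n$; more vividly, the coefficients of the would-be lower-degree terms cancel against one another, which is the phenomenon one should keep in mind when passing to the varying center tower.

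Next I would treat $\mathfrak{f}(x) = e^x$ to establish the stronger assertion. Since every derivative equals $e^b$, the formula gives $T_n\mathfrak{f}(b) = e^b\, p_n(b)$, where $p_n(b) = \sum_{k=0}^n (-b)^k/k!$ is the $n$th Taylor polynomial of $e^{-b}$; note that $p_n$ is not identically zero, its leading term being $(-1)^n b^n/n!$. The remaining point is to argue that $e^b$ times a nonzero polynomial is never itself a polynomial, which I would do by comparing growth as $b \to +\infty$: $|e^b p_n(b)|$ outgrows every power of $b$, whereas a polynomial of degree $d$ is bounded by a constant multiple of $|b|^d$ for large $|b|$. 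Hence $T_n\mathfrak{f}(b)$ is not polynomial in $b$.

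I do not anticipate any genuine obstacle: the whole argument is elementary calculus, and the only place calling for even a sentence of care is the final growth-rate comparison ruling out that $e^b p_n(b)$ is a polynomial. If one wishes to connect this to the functor-calculus picture, it is worth noting that in both examples $T_n\mathfrak{f}(b) \to \mathfrak{f}(0)$ as $n \to \infty$ — trivially for the cubic, and because $p_n(b) \to e^{-b}$ for the exponential — mirroring the convergence behavior of the varying center tower flagged after Observation \ref{r:converge}.
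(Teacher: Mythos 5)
Your proposal is correct and follows the paper's own route: the same two examples, with the cubic computation identical to the one in the Observation. The only addition is that you work out the exponential case explicitly (writing $T_n\mathfrak{f}(b)=e^b p_n(b)$ with $p_n$ the $n$th Taylor polynomial of $e^{-b}$ and ruling out polynomiality by growth), which the paper simply asserts and leaves to the reader.
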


The varying center tower was constructed with these three observations in mind, since defining the $n$th polynomial approximation for a functor $F:\AC\rightarrow \s$ means  letting $B$ vary.

 %%%%%%%%%%%%%%%%%%%%%%%%%%%%%%%%%%%%%%%%%%%%%%%%%%%%%%
\subsection{Varying Center Towers}
 %%%%%%%%%%%%%%%%%%%%%%%%%%%%%%%%%%%%%%%%%%%%%%%%%%%%%

%Defining the $n$th polynomial approximation for a functor $F:\AC\rightarrow \s$ means keeping $A$ fixed and letting 
%$B$ vary.  %Note that this is in stark contrast to the approach in Section 4 of \cite{G3}, where the classification of homogeneous functors is extended to functors from unbased spaces over $Y$ by letting the {\it initial} object vary, not the terminal object.  
In this section we will define a tower of functors $V_nF$ which act like an approximation tower for the object $F(A)$ in  the same way the $T_n{\mathfrak f}(b)$'s form a sequence of functions which approximates $f(0)$ as in Observation \ref{r:converge}.  As the terms $V_nF$ need not be polynomial nor approximations of the functor $F$, we will call this new sequence the {\it varying center  tower} (and not a Taylor tower) for $F:\AC\to \s$.

Let $\phi_f: \Cf \to \AC$ be the forgetful functor that sends $(X, \alpha_X, \beta_X)$ to $\alpha_X:A\rightarrow X$ (similar to $\phi$ from Section 4, \cite{G3}).  A functor  $F:\AC\to \s$ can be restricted to the functor $\phi^*_fF: \Cf\to \s$ defined by $\phi_f^*F = F\circ \phi_f $.  We often suppress $\phi^*_f$, and abuse notation by writing $F$ instead of $\phi^*_fF$ when the context is clear.
Let $A=(A, 1_A, f)$ denote the initial object of $\Cf$.

\begin{defn} \label{d:varying} The $n$th term in the varying center tower for the functor $F:\AC \to \s$ evaluated at the object $f:A\rightarrow B$  is $$V_nF(f: A\to B)\, := \, \Gamma_n^f(\phi_f^*F)(A).$$
\end{defn}

\begin{lem} \label{l:functorial} The $n$th term of the varying center tower, ${V_n}F$, is a functor from $\AC$ to $\s$ whenever $F:\AC\to \s$.
\end{lem}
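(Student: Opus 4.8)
The plan is to verify that $V_nF$ assembles into a functor by tracing through how a morphism in $\AC$ induces one in a suitable subcategory of $\Cf$, and then invoking the already-established functoriality of $\Gamma_n^f(-)(A)$.

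The key point is to understand what a morphism $g: (A \to B) \to (A \to B')$ in $\AC$ does. Such a $g$ is a map $g: B \to B'$ in $\C$ commuting with the structure maps from $A$. First I would observe that post-composition by $g$ sends a factorization $A \to X \to B$ of $f: A \to B$ to a factorization $A \to X \to B'$ of $f' = g \circ f: A \to B'$; concretely, this gives a functor $g_*: \Cf \to \mathcal{C}_{f'}$ which is the identity on underlying objects $X$ of $\C$ and simply post-composes the maps $\beta_X$ with $g$. In particular, $g_*$ sends the initial object $A = (A, 1_A, f)$ of $\Cf$ to the initial object $A = (A, 1_A, f')$ of $\mathcal{C}_{f'}$, and it is compatible with the forgetful functors to $\AC$, i.e. $\phi_{f'} \circ g_* = \phi_f$, so that $g_*^*(\phi_{f'}^* F) = \phi_f^* F$.

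Next I would argue that the construction $F \mapsto \Gamma_n^f F$ is natural with respect to this situation. Since $\Gamma_n^f F$ is built functorially from the cross-effect cotriple $\perp_{n+1}^f$ via the cofiber construction in \cite[Definition 5.3]{BJM}, and since the coproduct $\coprod_A$ appearing in Definition \ref{d:xfx} only depends on $A$ (not on $B$), the cubical diagrams $(\coprod_n)_B^{\mathbf X}$ are carried by $g_*$ to the corresponding diagrams $(\coprod_n)_{B'}^{g_* \mathbf X}$ — the only change being that the "collapse to $B$" maps become "collapse to $B'$" maps, compatibly via $g$. Hence there is an induced natural transformation of cross-effects and therefore an induced map $\Gamma_n^f(\phi_f^* F)(A) \to \Gamma_n^{f'}(\phi_{f'}^* F)(A)$, i.e. a map $V_nF(A \to B) \to V_nF(A \to B')$. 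Functoriality (identities to identities, composites to composites) then follows from the corresponding properties of $g_*$ and of the cotriple construction, each of which is strictly functorial in the relevant data. One also checks that weak equivalences in $\AC$ are preserved, since $\Gamma_n^f$ of a homotopy functor is again a homotopy functor \cite[Theorem 5.8]{BJM}.

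The main obstacle I anticipate is purely bookkeeping: carefully setting up the assignment $g \mapsto g_*$ between the categories $\Cf$ for varying $f$ and checking that all the data defining $\Gamma_n^f$ (the cross effects, the cotriple structure maps, the bar construction, the cofiber) are natural in this assignment with no choices that break compatibility — in other words, showing that "letting $B$ vary" is compatible with the entire construction of \cite{BJM}. Once this compatibility is in hand, the statement is immediate. An alternative, perhaps cleaner, approach would be to package all the $\Cf$ together into a single fibered category over $\AC$ and observe that the cotriple construction is defined fiberwise; but the direct argument above suffices for the lemma as stated.
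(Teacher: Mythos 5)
Your proof is correct and follows essentially the same route as the paper's: for a morphism $\gamma\colon B\to B'$ under $A$, define the post-composition functor $\Cf\to\C_{\gamma\circ f}$, note it carries the cubical diagrams defining the cross effects (and hence the cotriple resolution and its cofiber) compatibly to those for the new map, and deduce the induced map $\Gamma_n^f(\phi_f^*F)(A)\to\Gamma_n^{\gamma\circ f}(\phi_{\gamma\circ f}^*F)(A)$ together with preservation of identities and composites. The extra observations (preservation of weak equivalences, the fibered-category packaging) are fine but not needed for the statement.
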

\begin{proof} In order to show ${V_n}F$ is a functor, we need to define it for each morphism $\gamma$ from $f$ to $g$: 
\[ \xymatrix{& A \ar[dl]_f \ar[dr]^g \\ B\ar[rr]^{\gamma} && C.}\]  Let $\gamma_*:\Cf \to \Cg$ be the functor that is obtained by post-composition with $\gamma$.  That is, $\gamma_*(X, \alpha_X, \beta_X) = (X, \alpha_X, \gamma\circ\beta_X)$. 

Recall that  $\Gamma_n^f(\phi^*_fF)(A)$ is the homotopy cofiber of the map 
\[|(\perp_{n+1}^f)^{*+1}(\phi^*_fF(A))| \to \phi^*_fF(A).\]  
The morphism $\gamma:f\to g$ induces a morphism of the $(n+1)$-cubes   
that define $\ \perp_{n+1}^f(\phi^*_f F)$ and $\perp_{n+1}^g(\phi^*_gF)$.  This induces a map 
\[\gamma_*: (\perp_{n+1}^f)(\phi^*_f F)(A) \to (\perp_{n+1}^g)(\phi^*_g F)(A)\]
on  the total homotopy fibers of these cubes after the functor $F$ has been applied to them.     This in turn induces the map ${\Gamma^{\AC}_n}F(\gamma)$ of cofibers in the commuting diagram
\[ \xymatrix{ 
(\perp_{n+1}^f)^*(\phi^*_f F)(A) \ar[r]\ar[d] ^{\gamma}& F(A) \ar[r] \ar[d]^=& \Gamma_n^f(\phi^*_f F)(A) \ar[d]\\
(\perp_{n+1}^g)^*(\phi^*_g F)(A) \ar[r] & F(A) \ar[r] &\Gamma_n^g(\phi^*_g F)(A). }\]
Since ${V_n}F(f:A\to B) := \Gamma_n^f(\phi^*_f F)(A)$ and  ${V_n}F(g:A\to C):= \Gamma_n^g(\phi^*_g F)(A)$, this defines ${V_n}F(\gamma)$ .   The fact that $\gamma _*$ preserves compositions and identities on the underlying $(n+1)$-cubes ensures that ${V_n}F(\gamma)$ does as well.

\end{proof}

The functors $\Gamma_n^fF$ assemble into a tower of functors via natural transformations $q_n^f:\Gamma_n^fF \to \Gamma_{n-1}^fF$.  These natural transformations  can be used to assemble ${V_n}F$ into a tower of functors as well, justifying our use of the term ``varying center {\it tower}."

\begin{lem}\label{l:q-nat} There are natural transformations ${\rho_n}:{V_n}F \to {V_{n-1}}F$. \end{lem}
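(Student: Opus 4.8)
The plan is to construct the natural transformations $\rho_n$ directly from the natural transformations $q_n^f \colon \Gamma_n^f(\phi_f^* F) \to \Gamma_{n-1}^f(\phi_f^* F)$ of \cite[Theorem 5.8]{BJM}, by evaluating at the initial object $A$ of $\Cf$. Concretely, for a fixed object $f \colon A \to B$ of $\AC$, set
\[ \rho_n^f \colon V_n F(f\colon A\to B) = \Gamma_n^f(\phi_f^* F)(A) \xrightarrow{\ (q_n^f)_A\ } \Gamma_{n-1}^f(\phi_f^* F)(A) = V_{n-1}F(f\colon A\to B). \]
So the content is not in defining these maps objectwise, but in checking that they are natural in the $\AC$-variable, i.e.\ that for each morphism $\gamma \colon f \to g$ in $\AC$ the square
\[ \xymatrix{ V_n F(f) \ar[r]^{\rho_n^f} \ar[d]_{V_n F(\gamma)} & V_{n-1}F(f) \ar[d]^{V_{n-1}F(\gamma)} \\ V_n F(g) \ar[r]_{\rho_n^g} & V_{n-1}F(g)} \]
commutes.

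The key steps are as follows. First I would recall, from the proof of Lemma \ref{l:functorial}, that $V_n F(\gamma)$ is induced by the map $\gamma_* \colon (\perp_{n+1}^f)(\phi_f^* F)(A) \to (\perp_{n+1}^g)(\phi_g^* F)(A)$ on iterated homotopy fibers of the defining $(n+1)$-cubes, which arises because $\gamma_* \colon \Cf \to \Cg$ carries the cube $(\coprod_{n+1})_B^{\bf X}$ to $(\coprod_{n+1})_C^{\gamma_*{\bf X}}$. Second, I would observe that the comparison map $\perp_{n+1}^f F \to \perp_n^f F$ used to build the cotriple tower (obtained, as in \cite[Section 5]{BJM}, from the inclusion of an $n$-subcube into the $(n+1)$-cube) is itself compatible with $\gamma_*$, because post-composition with $\gamma$ commutes with all the structure maps of these cubes. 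Hence $\gamma_*$ induces a map of the whole simplicial objects $(\perp_{n+1}^f)^{*+1}(\phi_f^*F)(A)$ and respects the maps down to $(\perp_n^f)^{*+1}$-level, and therefore induces a map of the homotopy cofiber sequences defining $\Gamma_n^f$ and of the tower maps $q_n^f$ between them. Taking geometric realizations and homotopy cofibers — both of which are functorial — then yields the desired commuting square. Finally, naturality in the simplicial direction of $\perp_\bullet$ together with the fact that $\gamma_*$ preserves identities and composition (already noted in the proof of Lemma \ref{l:functorial}) upgrades this to a genuine natural transformation of functors $\AC \to \s$.

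The main obstacle is purely bookkeeping: one must verify that a single morphism $\gamma_*$ is simultaneously compatible with (a) the cross-effect $(n+1)$-cubes for all iterates, (b) the cotriple structure maps that produce the simplicial spectrum $(\perp_{n+1}^f)^{*+1}F(A)$, and (c) the degree-lowering maps $\perp_{n+1}^f \to \perp_n^f$ that induce $q_n^f$. Each of these is immediate from the fact that $\gamma_*$ is literally ``post-compose $\beta_X$ with $\gamma$'' and thus does not touch the $A$-coproducts or the domain objects, but assembling all three compatibilities into one commuting diagram of homotopy cofibers is where the care lies. Once that is in hand, the naturality square for $\rho_n$ follows formally, and no connectivity or model-category subtleties enter — this is strictly a statement about the functoriality of the \cite{BJM} constructions in the target of the factored map $f$.
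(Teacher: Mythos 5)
Your proof is correct and follows the same route as the paper: define $\rho_n$ objectwise as $q_n^f$ evaluated at the initial object $A$ of $\Cf$, and verify the naturality square for a morphism $\gamma\colon f\to g$ by using $\gamma_*$ to map the cross-effect cubes (and the resulting cotriple simplicial objects and cofiber sequences) for $f$ to those for $g$. The paper's proof is just a terser version of this, deferring the cube-level compatibilities to \S 5 of \cite{BJM}, which you have spelled out explicitly.
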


\begin{proof} 
For each object $f:A\rightarrow B$ of $\AC$, the map $q_n^f: {\Gamma^f_n}F\to {\Gamma^f_{n-1}}F$ induces a
natural transformation \[q_n^f:\Gamma_n^f\phi_f^*F \to \Gamma_{n-1}^f\phi_f^*F.\] 
   These assemble into a natural transformation $\rho_n:{V_n}F \to {V_{n-1}}F$ because any morphism $\gamma: f\to g$ in $\AC$ induces a commuting diagram
\[ \xymatrix{ \Gamma_n^f\phi_f^*F(A)\ar[r]^{\gamma_*}\ar[d]_{q_n^f} & \Gamma_n^g \phi_g^* F(A) \ar[d]^{q_n^g} \\
\Gamma_{n-1}^f \phi_f^* F(A) \ar[r]^{\gamma_*} & \Gamma_{n-1}^g \phi_g^* F(A)}.\]
To verify that this square commutes, use $\gamma_*$ to define maps between the cubes involved in the construction of $q_n^f$ and $q_n^g$.  See \S 5 of \cite{BJM} for details. 
\end{proof}

 Next we turn to understanding what role the notion of the degree of a functor plays in this context.    
 The first step is to define degree $n$ for functors of $\AC$.  We then establish the analogue of Proposition 3.5 for functors from $\AC$.
 
 \begin{defn}
 A functor $F:\AC\rightarrow \s$ is degree $n$ relative to $A$ provided that for all objects $f:A\rightarrow B$ in 
 $\AC$, $\phi_f^*F: \Cf\to \s$ is degree $n$ relative to $f$ (see Definition \ref{d:rel-f}). 
 \end{defn}
 
 \begin{prop}\label{p:degexc}The functor  $F:\AC \to \s$ is degree $n$ relative to $A$ if and only if
 $F$ is $n$-excisive relative to $A$.  If $F$ commutes with realizations, then $F$ is degree $n$ relative to $A$ if and only if $F$ is $n$-excisive.
 \end{prop}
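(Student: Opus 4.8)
The plan is to reduce the statement about functors on $\AC$ to the corresponding statement about functors on $\Cf$, namely Proposition \ref{p:deg-exc}, by unwinding the definitions and carefully tracking how strongly cocartesian cubes in $\AC$ and $\Cf$ are related through the forgetful functor $\phi_f$. First I would recall that, by definition, $F:\AC\to\s$ is degree $n$ relative to $A$ precisely when $\phi_f^*F:\Cf\to\s$ is degree $n$ relative to $f$ for every object $f:A\to B$ of $\AC$, and that by part (1) of Proposition \ref{p:deg-exc} this holds for a given $f$ if and only if $\phi_f^*F$ is $n$-excisive relative to $A$ (as a functor on $\Cf$). So the first half of the proposition amounts to showing that $F$ is $n$-excisive relative to $A$ (as a functor on $\AC$) if and only if $\phi_f^*F$ is $n$-excisive relative to $A$ for all $f$.

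The key step is the dictionary between cubes. Given a strongly cocartesian $(n+1)$-cube $\chi:P(\mathbf{n+1})\to\AC$ with $\chi(\emptyset)=A$, one may take $B:=\chi(\mathbf{n+1})$ (or, after a cofibrant/Reedy replacement, the colimit of $\chi$ restricted to the proper subsets) together with the structure maps to obtain a strongly cocartesian $(n+1)$-cube in $\Cf$ whose initial vertex is $A$; conversely, a strongly cocartesian cube in $\Cf$ with initial vertex $A$ becomes, after applying $\phi_f$, a strongly cocartesian cube in $\AC$ with initial vertex $A$, since $\phi_f$ preserves the relevant pushouts (it only forgets the map to $B$, and coproducts/pushouts under $A$ are computed the same way in $\C$, $\AC$, and $\Cf$). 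Applying $F$ respectively $\phi_f^*F$ then gives literally the same cube of spectra, so one is homotopy cartesian exactly when the other is. Quantifying over all $f$ on the $\Cf$ side corresponds to quantifying over all strongly cocartesian $(n+1)$-cubes with initial vertex $A$ on the $\AC$ side, because any such cube in $\AC$ has a well-defined terminal vertex which serves as the $B$ for the corresponding factorization category. This yields the first equivalence. The second statement, under the hypothesis that $F$ commutes with realizations, then follows immediately by combining the first equivalence with part (2) of Proposition \ref{p:deg-exc} applied to each $\phi_f^*F$ (noting that $\phi_f^*F$ commutes with realizations whenever $F$ does, since $\phi_f$ commutes with geometric realization), together with the observation that $n$-excision relative to $A$ and $n$-excision are notions that do not depend on a choice of $B$, so comparing them on $\AC$ reduces levelwise to comparing them on each $\Cf$.

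The main obstacle I anticipate is the bookkeeping around strongly cocartesian cubes and cofibrancy: one must check that passing between a cube in $\AC$ and the associated factorization category $\Cf$ can be done so that all objects remain cofibrant and the cube remains strongly cocartesian, and that the terminal vertex is handled consistently (in particular, that the cube in $\AC$ genuinely factors as a cube in $\Cf$ for $f$ the composite $A\to\chi(\mathbf{n+1})$). This is routine but must be stated with enough care that the biconditional goes through in both directions; everything else is a matter of transporting Proposition \ref{p:deg-exc} across the definitions.
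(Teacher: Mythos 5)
Your proposal is correct and follows essentially the same route as the paper: the paper likewise reduces to Proposition \ref{p:deg-exc} by observing that any strongly cocartesian $(n+1)$-cube $\chi$ in $\AC$ with initial vertex $A$ can be regarded as a cube in $\C_{f_\chi}$ for $f_\chi$ the composite $A\to\chi(\emptyset)\to\chi(\mathbf{n+1})$, and conversely treats cubes in $\Cf$ as cubes in $\AC$ by restriction along $\phi_f$. The second statement is handled in the paper exactly as you suggest, by invoking part (2) of Proposition \ref{p:deg-exc} for each $\phi_f^*F$.
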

 
 \begin{proof}  Let $\emptyset \subset {\bf n}$ denote the map in $P({\bf n})$ which is the inclusion of the empty set into the set $\bf n$.
For  any cubical diagram $\chi:P({\bf n}) \to \AC$, the choice $f_\chi=\chi(\emptyset\subset {\bf n})\circ \alpha_{\chi(\emptyset)}$, 
\[\xymatrix{A\ar[r]^{\alpha_{\chi({\emptyset})}\quad} & \chi(\emptyset)\ar[r]^{\chi(\emptyset\subset {\bf n})} & \chi({\bf n})},\]
 provides us with an object $f_\chi$ of $\AC$ with the property that the cubical diagram $\chi$ can be viewed as a cubical diagram in $\C _{f_{\chi}}$.
  
Suppose that $F$ is degree $n$ relative to $A$.   Let $\chi$ be any strongly cocartesian $(n+1)$-cubical diagram 
in $\AC$ with initial object $A$.   By assumption,  $F$ is degree $n$ relative to $f_{\chi}$ and $\chi$ is an $(n+1)$-cubical diagram in $\C _{f_{\chi}}$. By Proposition \ref{p:deg-exc}, $F$ takes 
$\chi $ to a cartesian diagram, and so $F$ is $n$-excisive relative to $A$.   

Conversely, if $F$ is $n$-excisive relative to $A$ as a functor of $\AC$, then by restriction, it 
is $n$-excisive relative to $A$ as a functor of $\Cf$ for any $f:A\rightarrow B$ in $\C$.   Proposition \ref{p:deg-exc} guarantees that $F$ is degree $n$ relative to $f$.   

The proof
of the second part of the theorem is similar.   
 
 \end{proof} 
 As is the case for Taylor series of functions (cf. Remark \ref{r:notdegreen}),
  we do not expect ${V_n}F$ to be a degree $n$ functor of $\AC$.    The next example demonstrates that $V_nF$ is not necessarily degree $n$,  even in degree 0.
  \begin{ex} \label{e:0recoversF} Let $F:\AC\to \s$.  Then $V_0F(f:A\to B)= \Gamma_0^fF(A)$.  The latter is calculated using the first cross effect functor
  \[ cr_1^fF(X) := \hofib \left(F(X)\to F(B)\right)\]
  for any $X\in \Cf$, including $X=A$.  Since $cr_1^nF(X)\simeq cr_1F(X)$ for all\\  $n\geq 1$, the degree 0 approximation $\Gamma_0^fF(A)$ is defined by taking the homotopy cofiber of \[ cr_1F(A) \to F(A).\]  But the homotopy fiber sequence
  \[ cr_1F(A) \to F(A) \to F(B)\]
  defining $cr_1F$ is also a cofiber sequence in sequence in spectra, hence $\Gamma_0^fF(A)=F(B)$.  Thus,
  \[ V_0F(f:A\to B) = F(B)\]
  for all $A\to B$ in $\AC$.  But this functor is not degree 0 as a functor of $B$, i.e.,  it is not constant if $F$ is not constant.  In fact, it will not even have finite degree if $F$ is not finite degree.
  \end{ex}
  
  However, even though $V_nF$ may not have finite degree, starting with a degree $n$ functor ensures that the varying center tower 
 \[\dots \rightarrow {V _n}F\rightarrow  V _{n-1}F\rightarrow \dots \rightarrow {V _0F}\]
 converges to the constant functor $F_A$ (i.e., given a functor $F:\AC\to \s$, $F_A:\AC\to \s$ is  the functor  satisfying $F_A(f:A\to B) = F(id:A=A)$ for all $f:A\to B$ in $\AC$).   
 \begin{prop}\label{p:constantconverge}
 Let $F:\AC\rightarrow \s$.   If $F$ is degree $n$ relative to $A$, then the natural transformation $\phi_f^*F\to \Gamma_n^f(\phi_f^*F)$ evaluated at $A$ induces a weak equivalence $F_A \simeq V_kF$ for all $k\geq n$.  
 \end{prop}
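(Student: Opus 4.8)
The plan is to deduce the statement from the universal property of the discrete Taylor tower: $\Gamma_k^f G$ is initial, up to weak equivalence, among degree $k$ relative to $f$ functors equipped with a natural transformation from $G$ (\cite[Proposition 5.6]{BJM}). First I would unwind Definition~\ref{d:varying} at the initial object. Since the forgetful functor $\phi_f$ carries the initial object $(A,1_A,f)$ of $\Cf$ to the initial object $1_A$ of $\AC$, evaluation at $A$ gives $\phi_f^*F(A)=F(1_A)=F_A(f\colon A\to B)$. Thus $V_kF(f\colon A\to B)=\Gamma_k^f(\phi_f^*F)(A)$, while $F_A(f\colon A\to B)=\phi_f^*F(A)$, and the natural transformation $\gamma_k^f\colon\phi_f^*F\to\Gamma_k^f(\phi_f^*F)$ evaluated at $A$ is exactly a map $F_A(f)\to V_kF(f)$. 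The goal is to show this map is a weak equivalence, naturally in $f$.

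Second, I would record the relevant degree bookkeeping. Because $F$ is degree $n$ relative to $A$, by definition $\phi_f^*F\colon\Cf\to\s$ is degree $n$ relative to $f$ for every object $f\colon A\to B$ of $\AC$. A functor that is degree $n$ relative to $f$ is also degree $k$ relative to $f$ for every $k\ge n$; this is the relative analogue of the fact that $n$-excisive functors are $(n+1)$-excisive, and follows from the construction of the cross effect functors (cf.\ \cite{BJM}), or alternatively from Proposition~\ref{p:deg-exc} together with the corresponding stability statement for functors that are $n$-excisive relative to $A$. Hence $\phi_f^*F$ is itself degree $k$ relative to $f$ for all $k\ge n$.

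Third, I would apply the universal property for each fixed $f$. The functor $\phi_f^*F$, equipped with its identity natural transformation, is a degree $k$ relative to $f$ functor under $\phi_f^*F$, so \cite[Proposition 5.6]{BJM} provides a natural transformation $s\colon\Gamma_k^f(\phi_f^*F)\to\phi_f^*F$ with $s\circ\gamma_k^f\simeq\mathrm{id}$. Then $\gamma_k^f\circ s$ and $\mathrm{id}$ are both endomorphisms of $\Gamma_k^f(\phi_f^*F)$ that are compatible with $\gamma_k^f$, and since $\Gamma_k^f(\phi_f^*F)$ is degree $k$ relative to $f$ (\cite[Proposition 5.4]{BJM}), the uniqueness clause of the universal property forces $\gamma_k^f\circ s\simeq\mathrm{id}$. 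So $\gamma_k^f$ is a natural weak equivalence of functors $\Cf\to\s$; evaluating at $A\in\Cf$ gives, for each $f\colon A\to B$, a weak equivalence $F_A(f)=\phi_f^*F(A)\simeq\Gamma_k^f(\phi_f^*F)(A)=V_kF(f)$.

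Finally, I would check naturality in $f$. The functoriality of $V_kF$ from Lemma~\ref{l:functorial} is assembled from the maps $\gamma_*\colon\Gamma_k^f(\phi_f^*F)(A)\to\Gamma_k^g(\phi_g^*F)(A)$ induced by a morphism $\gamma\colon f\to g$ in $\AC$, and these are compatible with the transformations $\gamma_k^\bullet$ by the naturality of the construction; on the initial object $\gamma_*$ sends $(A,1_A,f)$ to $(A,1_A,g)$, hence induces the identity of $F(1_A)$, which is precisely $F_A(\gamma)$. Therefore the squares relating $F_A$ and $V_kF$ commute, and the levelwise equivalences assemble into a natural equivalence $F_A\simeq V_kF$ for all $k\ge n$. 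I expect the only point needing genuine care to be the degree stability invoked in the second step — pinning down the exact reference, or giving the short cross-effect argument, that degree $n$ relative to $f$ implies degree $k$ relative to $f$ for $k\ge n$; the naturality verification is routine but slightly fiddly, and the core of the argument is the formal manipulation of the universal property in the third step.
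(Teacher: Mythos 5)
Your proposal is correct and follows essentially the same route as the paper: the paper's proof likewise notes that degree $n$ relative to $f$ implies degree $k$ relative to $f$ for $k\geq n$ (citing \cite[Proposition 3.22]{BJM}), then invokes \cite[Proposition 5.6]{BJM} to get $\Gamma_k^f\phi_f^*F\simeq\phi_f^*F$ and evaluates at $A$. Your extra unwinding of the universal property and the naturality check are just more detailed versions of steps the paper leaves implicit.
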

 \begin{proof}   Consider $f:A\rightarrow B$.   By assumption, $\phi^*_fF$ is degree $n$ relative to $f$.  
 By Proposition 3.22 of \cite{BJM}, $\phi^*_fF$ is degree $k$ relative to $f$ for all $k\geq n$.   Then, by Proposition 5.6 of \cite {BJM}, $\Gamma _k^f\phi^*_fF\simeq \phi^*_fF$, and so $ V_kF(f:A\rightarrow B)=\Gamma _k^f\phi^*_fF(A)\simeq \phi^*_fF(A)=F(A)$.      
 \end{proof}

This allows us to consider what happens as $n$ increases.

\begin{defn} Let $F:\AC \to \s$.  We say that the tower $\{{V_n}F(f)\}$ {\it converges at $f$} if the constant functor $F_A$ is homotopy equivalent to the limit ${V_{\infty}}F(f):= \holim_n {V_n}F(f)$.
\end{defn}

The preceding proposition tells us that if $F$ is degree $n$ relative to $A$, then ${V_n}{F}$ is equivalent to the constant functor with value $F(A)$  for all $f:A\to B$ in $\AC$.  Thus, the  calculus tower $\{{V_n}F\}$  is trying to approximate the value of $F$ at the initial object $A$, just as the Taylor polynomials approximated the initial value $f(0)$ when we allowed the center to become the variable in Remark \ref{r:converge}.   This is a departure from the usual:  the Taylor towers of \cite{G3}, \cite{R&B} and \cite{BJM} for functors $F$, under certain conditions, can be treated as approximations   to the {\it functor} $F$, rather than a constant functor given by a particular value of $F$.   For functors $G$ from the category of unbased topological spaces, this means that ${V_n}G$ is trying to approximate the value of $G$ on the {\it empty set}, the initial object in the category of unbased spaces.

%%%%%%%%%%%%%%%%%%%%%%%%%%%%%%%%%%%%%
\subsection{The de Rham complex as a varying center tower}
%%%%%%%%%%%%%%%%%%%%%%%%%%%%%%%%%%%%%

We finish this section by justifying the claim made in the introduction that  for rational algebras, the de Rham complex is the varying center tower for the forgetful functor from rational algebras to modules.   We begin by describing the functors and categories we use to do so.  

We use $\CQ$ to denote the category of commutative rational algebras and $s_\cdot \CQ$ to denote the category of simplicial objects in $\CQ$.  In particular, let $\Q_\cdot$ denote the constant simplicial object in $s_\cdot \CQ$ that is $\Q$ in each simplicial.  Let ${\U}$ denote the forgetful functor from $\CQ$ to the category of ${\Q}$-modules.  We can extend this to a functor from $s_\cdot \CQ$ to simplicial ${\Q}$-modules by applying $\U$ degreewise.  

Recall that for a morphism of rational algebras $f:X\rightarrow B$, the de Rham complex $DR_XB$ is the cochain complex of exterior algebras of the K\"ahler differentials:
\[ \dots \leftarrow \Omega^3_{B/X}\leftarrow \Omega^2_{B/X}\leftarrow\Omega_{B/X}\leftarrow B. \]
See \cite[Sections 8.8.1 and 9.8.9]{weibel} for further details.  The construction is natural in maps $f:X\to B$, so given a map of simplicial algebras $f:X_\cdot \to B_\cdot$ we can construct a simplicial cochain complex whose $k$th object is $DR_{X_k}B_k$.  Let
 $DR_{X_\cdot}B_\cdot$ denote the associated (second quadrant) bicomplex obtained via normalization.  
 
 For a fixed $B_\cdot$ in $s.\CQ$, the functor $DR_{(-)}B_{\cdot}:(X_\cdot \rightarrow B_\cdot)\mapsto DR_ {X_\cdot} B_\cdot$ is a functor whose source category is the  category of simplicial rational algebras over $B_{\cdot}$.  However $DR_{(-)}B_{\cdot}$ is not in general  a homotopy functor; to remedy this we will assume that the map $X_{\cdot}\to B_{\cdot}$ is a cofibration in $s.\CQ$. 
 The total complex ${\rm Tot}^{\Pi}(DR_{X_{\cdot}}B_{\cdot})$ is what we mean by the de Rham complex of $f:X_{\cdot}\rightarrow B_{\cdot}$, as the columns of  $DR_{X_{\cdot}}B_{\cdot}$ correspond to the exterior algebra terms in the de Rham complex and the filtration of $DR_{X_\cdot}B_\cdot$  by columns converges to ${\rm Tot}^{\Pi}DR_{X_{\cdot}}B_{\cdot}$.  However, for ease of exposition, we will usually suppress  ${\rm Tot}^{\Pi}$ and work directly with the underlying bicomplex.  
 %(see [REF] for the model structure of $\sComm$)%    

 For a simplicial rational algebra over $B_{\cdot}$,  $X_{\cdot}\to B_{\cdot}$, we use $DR^n_{X_\cdot}B_\cdot$ to denote the $n$th truncated de Rham complex of $X_\cdot\rightarrow B_\cdot$, i.e., the bicomplex whose first $n+1$ columns are the same as the first $n+1$ columns of $DR_{X_\cdot}B_\cdot$, and whose columns are identically 0 thereafter:
 \[
 DR^n_{X_{\cdot}}B_{\cdot}=\dots \leftarrow 0\leftarrow\dots\leftarrow 0\leftarrow \Omega^n_{X_{\cdot}}B_{\cdot}\leftarrow\dots\leftarrow \Omega^2_{X_{\cdot}}B_{\cdot}\leftarrow  \Omega^1_{X_{\cdot}}B_{\cdot}\leftarrow B_{\cdot}.
 \]

 For a fixed rational algebra, $f: \Q_{\cdot}\to B_{\cdot}$, we make use of the $n$th truncated de Rham complex as a functor of $(s_{\cdot} {\CQ})_f$ as follows.
 
 \begin{defn}\label{d:drn}
 Let $f: \Q_{\cdot}\to B_{\cdot}$ be a morphism in $({s_\cdot \CQ})_f$.  The functor $DR_n^f$ is a functor from  $({s_\cdot \CQ})_f$ to the category of rational chain complexes  that takes the object $X_\cdot=\Q_\cdot\rightarrow X_\cdot\rightarrow B_\cdot$ to 
 \[
 DR_n^f(X_{\cdot}):={\rm Tot}^{\Pi}(DR^n_{X_{\cdot}}B_{\cdot}).
 \] 
  \end{defn}
 Our goal is to outline a proof of  the following unpublished result of Goodwillie and Waldhausen.  We learned of this result and method of proof from conversations with Goodwillie.  
\begin{prop}\label{o:derham}  For a  rational algebra $\Q \rightarrow B$, and its cofibrant replacement $\Q_\cdot\rightarrow B_\cdot$, 
$V_n\U(\Q_\cdot\rightarrow B_\cdot)\simeq DR^n_{\Q_\cdot}B_\cdot.$
\end{prop}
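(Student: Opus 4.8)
The plan is to prove the stronger statement that, as functors on $({s_\cdot \CQ})_f$, there is a natural equivalence $\Gamma_n^f(\phi_f^*\U)\simeq DR_n^f$ (see Definition \ref{d:drn}), and then to deduce the proposition by evaluating both sides at the initial object $\Q_\cdot$ and unwinding Definition \ref{d:varying}: $V_n\U(\Q_\cdot\to B_\cdot)=\Gamma_n^f(\phi_f^*\U)(\Q_\cdot)\simeq DR_n^f(\Q_\cdot)={\rm Tot}^{\Pi}DR^n_{\Q_\cdot}B_\cdot$. Throughout, one works in the derived category of rational chain complexes, which we identify with $H\Q$-module spectra so that $\phi_f^*\U$ is a functor to $\s$; passing to the cofibrant replacement $\Q_\cdot\to B_\cdot$ is precisely what makes the levelwise Kähler differentials $\Omega^k_{B_\cdot/(-)}$, after normalization, compute the correct derived functors (the cotangent complex and its exterior powers; cf. \cite{weibel}).

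First I would verify that $DR_n^f$ is degree $n$ relative to $f$ in the sense of Definition \ref{d:rel-f}, i.e.\ that $cr_{n+1}^f(DR_n^f)\simeq\star$. Since $DR_n^f(X_\cdot)$ is the finite total complex of the columns $B_\cdot=\Omega^0,\Omega^1_{B_\cdot/X_\cdot},\dots,\Omega^n_{B_\cdot/X_\cdot}$ and the cross effect is built from (exact) iterated homotopy fibers, it suffices to show that each functor $X_\cdot\mapsto\Omega^k_{B_\cdot/X_\cdot}$, $0\le k\le n$, is degree $k$ relative to $f$. The case $k=0$ is the constant functor $B_\cdot$. For $k\ge 1$ one uses that in $\CQ$ the coproduct $\coprod_{\Q}$ is $\otimes_{\Q}$, that $\Omega^k_{B/(-)}\cong\Lambda^k_B\Omega^1_{B/(-)}$, and the transitivity and base-change triangles for the derived Kähler differentials applied to the defining cube $(\coprod_{k+1})_B^{\mathbf X}$; these show that $X_\cdot\mapsto\Omega^1_{B_\cdot/X_\cdot}$ is degree $1$ relative to $f$, and then the standard fact that the exterior power functor $\Lambda^k$ carries a degree $1$ functor to a degree $k$ functor (its cross effects above degree $k$ vanish) gives the claim.

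Next I would construct the comparison map and invoke universality. The natural transformation $\phi_f^*\U\to DR_n^f$ is obtained by sending $\phi_f^*\U(X_\cdot)=X_\cdot$ via $\beta_{X_\cdot}$ to the $0$th column $B_\cdot$ of $DR^n_{X_\cdot}B_\cdot$ and then including the $0$th column into the total complex; this is a map of complexes because the composite of $\beta_{X_\cdot}$ with the de Rham differential $d\colon B_\cdot\to\Omega^1_{B_\cdot/X_\cdot}$ is zero by the very definition of \emph{relative} differentials, so the de Rham differential does not obstruct the inclusion of the $0$th column. Since $DR_n^f$ is degree $n$ relative to $f$, the universal property of $\Gamma_n^f$ (\cite[Proposition 5.6]{BJM}) supplies a natural transformation $\Gamma_n^f(\phi_f^*\U)\to DR_n^f$ under $\phi_f^*\U$, compatible with the tower maps, and the task reduces to showing it is a weak equivalence. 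This I would do by induction up the towers, comparing $n$th fibers: at the bottom $\Gamma_0^f\U(X_\cdot)\simeq\U(B_\cdot)=B_\cdot=DR_0^f(X_\cdot)$ by the argument of Example \ref{e:0recoversF}, and the inductive step amounts to matching the layer $\widetilde{\Gamma}_n^f\U\simeq[\widetilde{L}_n^f\U(X_\cdot,\dots,X_\cdot)]_{h\Sigma_n}$ of Proposition \ref{p:layers} with the single-column layer $\Omega^n_{B_\cdot/(-)}\simeq\Lambda^n_{B_\cdot}\Omega^1_{B_\cdot/(-)}$ (with its homological shift) of $DR_\bullet^f$.

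The core of the argument, and the step I expect to be the main obstacle, is this layer identification. One must identify the symmetric multilinear functor $\widetilde{L}_n^f\U$ with $\Omega^1_{B_\cdot/X_1}\otimes^{\mathbb L}_{B_\cdot}\cdots\otimes^{\mathbb L}_{B_\cdot}\Omega^1_{B_\cdot/X_n}$ equipped with the \emph{sign-twisted} $\Sigma_n$-action. The ingredients are: (a) the case $n=1$, namely that $\widetilde{L}_1^f\U=\widetilde{\Gamma}_1^f\U$ is the reduced derived relative cotangent complex $\Omega^1_{B_\cdot/(-)}$ — the classical fact that the linearization of the forgetful functor from commutative algebras is André--Quillen homology; (b) the structure of the higher Goodwillie layers, obtained via the comparison $\Gamma_n^f\simeq P_n$ of \cite[Corollary 6.8, Theorem 6.9]{BJM} and Goodwillie's analysis in \cite{G3}, together with the computation of the cross effects $cr_n^f\U$ — which is tractable because $\coprod_\Q=\otimes_\Q$ — whose reduced part is, rationally, the indicated tensor power with the sign coming from the $n$ suspension coordinates; and (c) the rational collapse: since $\Sigma_n$ has no higher rational (co)homology, the homotopy orbits of the sign-twisted action are exactly the exterior power $\Lambda^n_{B_\cdot}\Omega^1_{B_\cdot/(-)}=\Omega^n_{B_\cdot/(-)}$. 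Getting the sign right — so that \emph{exterior} rather than symmetric powers appear — and matching the homological shifts are the delicate points; once this compatibility is in place, the induction closes and the proposition follows.
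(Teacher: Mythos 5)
Your overall architecture (show $\Gamma_n^f(\phi_f^*\U)\simeq DR_n^f$ as functors on $(\sComm)_f$, then evaluate at $\Q_\cdot$) coincides with the paper's, and your verification that $DR_n^f$ is degree $n$ relative to $f$ is essentially the paper's condition (1) (the paper argues via $\Omega^1_{B/X}\cong I/I^2$ and linearity of the augmentation-ideal quotient, you argue via transitivity/base change, but the content is the same). The divergence is in how the comparison map is shown to be an equivalence, and this is where your proposal has a genuine gap. Your plan hinges on identifying the symmetric multilinear functor $\widetilde{L}_n^f\U$ of Proposition \ref{p:layers} with the sign-twisted $n$-fold derived tensor power of $\Omega^1_{B_\cdot/(-)}$, so that rational homotopy orbits produce $\Omega^n_{B_\cdot/(-)}$ with the correct shift. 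You yourself flag this as the main obstacle, and as written it is only asserted: it amounts to computing the rational Goodwillie layers (equivalently all cross effects) of the forgetful functor, a substantial theorem in its own right (it is essentially the content of results in \cite{R&B}, \cite{KM}, \cite{HH}), not a routine step. Moreover, even granting the abstract identification of layers on both sides, your induction needs the specific map $\Gamma_n^f\U\to DR_n^f$ furnished by the universal property of \cite[Proposition 5.6]{BJM} to induce an equivalence on each layer; universality alone does not give this, and verifying it requires some agreement-type input relating $\U(X_\cdot)$ to $DR^f_n(X_\cdot)$ on highly connected $X_\cdot\to B_\cdot$ — an input that appears nowhere in your argument. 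In particular the Poincar\'e lemma, which is what actually makes the statement true rationally, is never invoked.

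The paper's proof avoids the layer computation entirely: it uses the characterization of $\Gamma_n^f F$ as the universal degree $n$ functor receiving a map from $F$ that agrees with $F$ to order $n+1$ (Proposition \ref{p:degexc} together with a variant of \cite[Proposition 1.6]{G3}). Concretely, after reducing to a cofibration $X_\cdot\to B_\cdot$ that is an isomorphism through dimension $k$, one sees $\Omega^m_{B_i/X_i}=0$ for $i\le mk$, so the truncation $DR_{X_\cdot}B_\cdot\to DR^n_{X_\cdot}B_\cdot$ is $\bigl((n+1)k-(n+1)\bigr)$-connected, while the Poincar\'e lemma gives ${\rm Tot}^{\Pi}(DR_{X_\cdot}B_\cdot)\simeq X_\cdot=\U(X_\cdot)$; hence $\U(X_\cdot)\to DR^f_n(X_\cdot)$ is $\bigl((n+1)k-(n+1)\bigr)$-connected, which is exactly the agreement estimate needed to conclude $\Gamma_n^f\U\simeq DR_n^f$. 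If you want to salvage your layer-by-layer route, you would need either to prove the identification of $\widetilde{L}_n^f\U$ (with signs and shifts) and then supply a separate argument that the universal map is an equivalence on layers, or to import the connectivity/Poincar\'e-lemma estimate — at which point you have essentially reproduced the paper's argument.
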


Our first step in justifying this claim is to describe a strategy for identifying  $V_nF$ for a functor $F:{\AC}\rightarrow \s.$  By Definition \ref{d:varying}, to identify $V_nF(f:A\rightarrow B)$ for  $F:{\AC}\rightarrow \s$ and an object $f:A\rightarrow B$ in $\AC$,  we must  find  $\Gamma_n^fF$  and then determine the value of this functor at $A$ (viewed as the object $A=A\rightarrow B$ in $\Cf$).  When $F$ commutes  with realizations, Proposition \ref{p:degexc} and a variant of Proposition 1.6 of \cite{G3} provide a means of  proving that a particular functor is equivalent to $\Gamma_n^fF$.  More explicitly, these results guarantee that we can determine $V_nF$  by first identifying  for each $f:A\rightarrow B$ in $\AC$ a functor $G_n^f:\Cf\rightarrow \s$ that is natural in $f$ and satisfies the following:
\begin{enumerate}
\item $G_n^f:\Cf\rightarrow \s$  is $n$-excisive, and

\item there is a natural transformation $\phi_f^*F\rightarrow G_n^f$ with the property that there are constants $\kappa$ and $c$ such that for any object $A\rightarrow X\rightarrow B$ in $\Cf$, where $X\rightarrow B$ is $k$-connected  with $k\geq \kappa$, the induced morphism 
$\phi_f^*F(X)\rightarrow G_n^f(X)$ is at least $(-c+(n+1)k)$-connected.

\end{enumerate}
These conditions guarantee that $G_n^f\simeq \Gamma_n^fF$ as a functor of $\Cf$.  Then  $V_nF(f:A\rightarrow B)\simeq G_n^fF(A)$.  

We apply this result to the case where $\C=s.\CQ$, $A=\Q_{\cdot}$, and $F={\mathcal U}$, the forgetful functor of Proposition \ref{o:derham}. We claim that in this context, the functor $DR^f_n$ of Definition \ref{d:drn} is the correct choice for the functor $G_n^f$ described above.  
Thus, to confirm Proposition \ref{o:derham}, it suffices to show that for any cofibrant object $f:\Q_\cdot\rightarrow B_\cdot$ in $s.\CQ$,  
\begin{enumerate}
\item $DR_n^f$ is $n$-excisive as a functor of $(s.\CQ)_f$, and
\item  for an object $X_\cdot=\Q_\cdot\rightarrow X_\cdot \rightarrow B_\cdot$ in $(s.\CQ)_f$, where $X_\cdot\rightarrow B_\cdot$ is $k$-connected with $k\geq 1$, 
$$
\U(X_\cdot)\rightarrow DR^f_n{X_\cdot}$$ is at least $(n+1)k-(n+1)$-connected.  The natural map $\U(X_\cdot)\rightarrow DR^f_n(X_\cdot)$ is the map that is $f:X_{\cdot}\rightarrow B_{\cdot}$ in the $0$th level of the complex and $0$ elsewhere.  
\end{enumerate}
We describe how to do this in what follows.

%As we are primarily concerned with what happens when $B_\cdot$ is a cofibrant replacement for a rational algebra $B$, we will describe how to establish (1) and (2) for polynomial algebras.  

We begin by explaining why condition (1) holds.  In the case $n=1$, for  morphisms of commutative rational algebras  $X\rightarrow B$, one can verify  that the functor $(X\rightarrow B)\mapsto \Omega^1_{B/X}$ is $1$-excisive and reduced by first recalling that $\Omega^1_{B/X}$ is isomorphic to $I/I^2$ where $I$ is the kernel of the map $B\otimes _XB\rightarrow B$ (\cite{weibel}, 9.2.4). The functor $I/I^2$ is equal to the composition $(K/K^2)\circ E$ where $E$ is the functor from $\CQ$ to the category of augmented rational algebras that takes $\Q\rightarrow X\rightarrow B$ to $B\rightarrow B\otimes _XB\rightarrow B$ and $K(B\rightarrow Y\rightarrow B)$ is the augmentation ideal functor from  augmented rational algebras to rational modules.  The functor $E$ preserves cocartesian diagrams and the functor $K/K^2$ is known to be linear (see for example, \cite{R&B}, \cite{KM}, \cite{HH}.)  This implies that $DR^f_1$ is degree 1.  

For $n>1$, the fact that $(X\rightarrow B)\mapsto \Omega^1_{B/X}$ is $1$-excisive and reduced  also tells us that   $(X\rightarrow B)\mapsto \Omega^n_{B/X}$ is a homogeneous degree $n$ functor.   In particular, this holds because $\Omega^n_{B/X}$ is the $n$-fold exterior power of $ \Omega^1_{B/X}$.  As such it is given by the orbits of the canonical action of the $n$th symmetric group $\Sigma_n$ (and hence, homotopy orbits, since we are working rationally) of a multilinear functor of $n$ variables.  By fundamental results of Goodwillie (see \cite{G3} or \cite{K}), we know that functors of this form are homogeneous of degree $n$.  As a result, the functor $DR^f_n$ is degree $n$.  

To see that (2) holds, consider an object in $(s.\CQ)_f$, $\Q_\cdot\rightarrow X_\cdot\rightarrow B_\cdot$, where $X_\cdot\rightarrow B_\cdot$ is $k$-connected for some $k\geq 1$.  Consider the map of bicomplexes $tr_n: DR_{X_\cdot}B_\cdot \to DR^n_{X_\cdot}B_\cdot$   which truncates the de Rham complex at the $n$th column. 
We note that any $k$-connected cofibration $f$ has a factorization
\[ \xymatrix{X_\cdot\ar[r]^{\tilde{f}} & B_\cdot' \ar[r]^{\simeq} & B_\cdot}\]
where the map $\tilde{f}$ is a cofibration, an isomorphism in dimensions up to $k$ and an injection in dimension $k+1$.  Thus, since $DR_{(-)}B_\cdot$ and $DR^n_{(-)}B_\cdot$ are homotopy functors, we can assume that our $k$-connected cofibration has the same form as ${\tilde {f}}$.  As a result, $\Omega^1_{B_i/X_i}$ is zero for $0\leq i\leq k$, and in turn $\Omega^m_{B_i/X_i}$ is zero for $0\leq i\leq mk$ (using the fact that $\Omega^n_{B_\cdot/X_\cdot}$ is the $n$th exterior algebra of $\Omega^1_{B_\cdot/X_\cdot}$).   %Similarly, $DR^n_{X_\cdot}B_\cdot$ vanishes under the same line of slope $k$.  This implies that the truncation map $tr_n$ is an isomorphism under this vanishing line.  
Taking the total complex, we see that this means that 
$tr_n:DR_{X_\cdot}B_{\cdot}\rightarrow DR^n_{X_\cdot}B_\cdot$ 
is $(n+1)k-(n+1)$-connected.  

We next claim that because $k\geq 1$, ${\rm Tot}^{\Pi}(DR_{X_{\cdot}}B_{\cdot})\simeq X_{\cdot}$.  This follows from the fact that when $k\geq 1$, the resulting connectivity of the columns of $DR_{X_{\cdot}}B_{\cdot}$ guarantees that the bicomplex is bounded, and so ${\rm Tot}^{\Pi}(DR_{X_{\cdot}}B_{\cdot})\simeq {\rm Tot}^{\oplus}(DR_{X_{\cdot}}B_{\cdot})$.  Since we are  working rationally, the Poincar\'e lemma tells us that the $m$th row of $DR_{X_{\cdot}}B_{\cdot}$ is equivalent to $X_m$ and so ${\rm Tot}^{\oplus}(DR_{X_{\cdot}}B_{\cdot})\simeq X_{\cdot}$.  Hence $\U(X_{\cdot}\rightarrow B_{\cdot})\rightarrow  DR^f_n(X_{\cdot}B_{\cdot})$ is at least $(n+1)k-(n+1)$-connected.  

%Then, because $k\geq 1$ and we are working rationally, the Poincar\'e lemma guarantees that each row of  $DR_{X_\cdot}B_\cdot$ is equivalent to $X_\cdot$.  Hence, $DR_{X\cdot}B_\cdot\simeq X_{\cdot}$ so that the natural transformation $\U(X_\cdot)\rightarrow DR^n_{X\cdot}B_{\cdot}$ is at least $(n+1)k-(n+1)$-connected.  

Given that conditions (1) and (2) hold, we know that for $f:\Q_\cdot\rightarrow B_\cdot$, $\Gamma_n^f\U\simeq DR_n^f$.  Evaluating at the initial object $\Q_\cdot$, that is, $\Q_\cdot =\Q_\cdot\rightarrow B_\cdot,$ in $(s.\CQ)_f$ gives us
\[
V_n\U(f:\Q_\cdot\rightarrow B_\cdot)\simeq DR_n^f(\Q_\cdot)=DR^n_{\Q_{\cdot}}B_{\cdot}, 
\]
as predicted by Proposition \ref{o:derham}.  The convergence of this tower when $\Q_\cdot\rightarrow B_{\cdot}$ is a $k$-connected cofibration with $k\geq 1$ (which will be addressed further in Corollary 4.7) is now a restatement of the Poincar\'e Lemma.  That is, $DR_{\Q}B_\cdot \simeq \Q \simeq V_\infty \U (\Q\to B_\cdot)$ in this case.  A striking feature of the varying center tower is that  $V_\infty \U$ is independent of $B_\cdot$ entirely when the tower converges.  
%That is, the varying center tower $V_n \U$ bundles together all of the towers $\Gamma_n^f \U$ for every $f:\Q\to B_\cdot$ to provide a filtration of a single object, $\Q=\U(\Q)$.

%%%%%%%%%%%%%%%%%%%%%%%%%%%%%%%%%%%%%%%%%%%
\section{Proof of  main theorem \label{s:proof}}
%%%%%%%%%%%%%%%%%%%%%%%%%%%%%%%%%%%%%%%%%%%%

The goal of this section is to prove Theorem \ref{t:infinity}, which gives an equivalence between the limit of the varying center tower,
$ V_\infty F(f:A\to B)$,
and the total space of the cosimplicial spectrum
$ |\Gamma_\infty^f  F(X\otimes_A \sk_n \Delta^\bullet_*)|$ for any $A\rightarrow X\rightarrow B$ that factors $f$.
We  proceed inductively, first proving results for  linear functors, then finite degree functors and then for limits of Taylor towers.

The advantage of examining $\Gamma_\infty^f F(X\otimes_A \sk_n \Delta^\bullet_*)$ is that  $X\otimes_A \sk_n \Delta$ can be thought of as being more highly connected than $X$.  
Combined with a good notion of analyticity (Definition \ref{d:rho})  this allows us to describe $V_\infty F(f:A\to B)$ more succinctly as $\Tot | F(B\otimes_A \sk_n \Delta^\bullet_*)|$, avoiding the need to calculate the Taylor tower $\{\Gamma_n^f F\}$ at all.  For spaces, where the notion of  ``connectivity" is well-understood, this is made precise in Theorem \ref{t:connected}.  The general form of this result for arbitrary model categories is stated in Corollary \ref{p:analyticprep}.

Let $f:A\to B$ be any object of $\AC$.  Let $A\to X\to B$ be any factorization of $f$, so that $X=A\rightarrow X\rightarrow B$ is an object of $\Cf$.    
In this section we work with functors $F:{\sc D}\rightarrow \s$ where ${\sc D}$ is $\AC$ or $\Cf$.  When starting with $F:\AC\rightarrow \s$, we will also use $F$ to represent the functor $\phi_f^*F:\Cf\rightarrow \s$, as defined in Section 3.3. As in the previous section, we assume that $F$ is a homotopy functor and $\C$ is a simplicial model category.   When applying $F$ to simplicial or cosimplicial objects in ${\sc D}$, $F$ will be applied degreewise.   In Section 3, we defined $B\otimes_A U$ for  a morphism $f:A\to B$ in $\Cf$ and a finite set $U$;  it is the colimit of $\#U$ copies of $B$ ``added" along $A$ via the maps $f$.  This can be generalized to cosimplicial-simplicial sets $Z$ by using $Z_k^n$ in place of $U$.  The various face, degeneracy, coface and codegeneracy maps involved become insertions and fold maps.    Throughout this section we write $\emptyset$ for the constant cosimplicial-simplicial set that is empty in each simplicial and cosimplicial degree.  Thus $A= X\otimes_A \emptyset$ is a constant cosimplicial simplicial object of $\C$.  The inclusion map $\emptyset \to \sk_n\Delta_*^\bullet$ induces a map of cosimplicial simplicial 
$\C$-objects $A\to X\otimes_A \sk_n\Delta_*^\bullet$, where $A$ denotes the constant cosimplicial simplicial object.
This map induces a weak equivalence of spectra when a functor $F$ of finite degree  is applied to it, as is proved in the next few propositions.

\begin{prop} \label{p:linear} If $F:\C_f \to \s$ is degree $1$ relative to $f$ then for all $X$ in $\Cf$, there is a weak homotopy equivalence
\[F(A)\overset{\simeq\ }{\rightarrow} Tot^m|\ F(X\otimes_A sk_n \Delta_*^\bullet)|\]
 for all $m\geq n+1$.    
Thus, if $F:\AC \rightarrow \s$ is degree $1$ relative to $A$, then for all $f:A\rightarrow B$ in $\AC$ and $A\rightarrow X\rightarrow B$ in $\Cf$, there is a pro-equivalence of towers
\[\tower F(A)\to \tower |F(X\otimes_A \sk_n\Delta_*^\bullet)|.\]
\end{prop}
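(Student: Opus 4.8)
The plan is to split off the reduced part of $F$, identify the realization of that part with a coefficient spectrum smashed against the cosimplicial space $\sk_n\Delta^\bullet$, and reduce the first assertion to a stable strengthening of Proposition~\ref{p:Tot}; the ``Thus'' clause is then formal.

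\emph{Reduction to the reduced case.} Every object of $\Cf$ carries a canonical map from $A$, so there is a natural transformation $\underline{F(A)}\to F$ out of the constant functor at $F(A)$. Set $G:=\operatorname{hocof}\bigl(\underline{F(A)}\to F\bigr)$. Since cross effects are exact and annihilate constant functors, $G$ is again degree $1$ relative to $f$, and $G(A)\simeq\star$. Applying the (degreewise) cofiber sequence $\underline{F(A)}\to F\to G$ to the cosimplicial simplicial object $X\otimes_A\sk_n\Delta^\bullet_*$, then realizing and totalizing---using that $|{-}|$ and $\Tot^m$ are exact in $\s$ and that $\Tot^m$ of a constant cosimplicial spectrum returns its value, since ${\bf\Delta}_m$ has the terminal object $[0]$ and hence contractible nerve---gives a cofiber sequence
\[ F(A)\longrightarrow \Tot^m|F(X\otimes_A\sk_n\Delta^\bullet_*)|\longrightarrow \Tot^m|G(X\otimes_A\sk_n\Delta^\bullet_*)| ,\]
so it is enough to show the right-hand term vanishes for $m\geq n+1$.

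\emph{Identifying the reduced term.} Being degree $1$ relative to $f$, $G$ sends a homotopy pushout along $A$ to a homotopy cartesian---equivalently, in $\s$, homotopy cocartesian---square; combined with $G(A)\simeq\star$, an induction on $\#S$ yields a natural equivalence $G(X\otimes_A S)\simeq\bigvee_S G(X)$ for finite sets $S$. Letting $S$ range over the simplices of a simplicial set $K_*$ and using that $G(X)\wedge(-)$ commutes with realization, we get $|G(X\otimes_A K_*)|\simeq G(X)\wedge\Sigma^\infty_+|K_*|$, naturally in $K_*$. Applying this cosimplicial-degreewise and commuting $G(X)\wedge(-)$ past $\Tot^m$ (a finite homotopy limit, hence exact), the claim becomes
\[ \Tot^m\bigl(\Sigma^\infty_+\sk_n\Delta^\bullet\bigr)\simeq\star\qquad (m\geq n+1),\]
where $\sk_n\Delta^\bullet=|\sk_n\Delta^\bullet_*|$ is the cosimplicial space.

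\emph{The crux, and the main obstacle.} This last statement is the stable counterpart of $\Tot^m\sk_n\Delta^\bullet=\emptyset$ (Proposition~\ref{p:Tot}, Lemma~\ref{l:empty}), and it does not follow formally, since $\Sigma^\infty_+$ does not commute with $\Tot^m$. I would establish it by combining the skeletal filtration $\emptyset=\sk_{-1}\Delta^\bullet\subseteq\sk_0\Delta^\bullet\subseteq\cdots\subseteq\sk_n\Delta^\bullet$ with an analysis of the (co)normalized cochain complex of the cosimplicial spectrum $\Sigma^\infty_+\sk_n\Delta^\bullet$: its homology splits by degree into a constant summand (from $H_0$) and the summand carrying $H_n$, and the essential point is that in cosimplicial degrees $\geq n+1$ the constant summand is cancelled against the $H_n$ summand---this cancellation is exactly the content of the emptiness of $\Tot^m\sk_n\Delta^\bullet$, re-read at the level of cochain complexes, and verifying it is the combinatorial heart of the argument (cf.\ the proof of Lemma~\ref{l:empty}). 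This is the step I expect to be hardest. Finally, the ``Thus'' clause follows at once: applying the first part to $\phi_f^*F$ for each $f:A\to B$ shows that $F(A)\to\Tot^m|F(X\otimes_A\sk_n\Delta^\bullet_*)|$ is a weak equivalence for every $m\geq n+1$, so $\tower F(A)\to\tower|F(X\otimes_A\sk_n\Delta^\bullet_*)|$ is a levelwise equivalence from level $n+1$ onward, hence a pro-equivalence by the remark after Definition~\ref{d:pro}.
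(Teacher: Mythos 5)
Your overall architecture matches the paper's: the paper also passes to the reduced functor $\widetilde F(Y)=\operatorname{hocofiber}(F(A)\to F(Y))$, and it uses excision relative to $A$ together with an induction over finite sets to identify $F(X\otimes_A U)\simeq F(X)\otimes_{F(A)}U$, which in the reduced case is the wedge $\bigvee_U F(X)$; your square-by-square induction, the rewriting $|G(X\otimes_A K_*)|\simeq G(X)\wedge\Sigma^\infty_+|K_*|$, the commutation of $G(X)\wedge(-)$ past the finite homotopy limit $\Tot^m$, and the deduction of the pro-equivalence from levelwise equivalences in stages $\geq n+1$ are all fine.

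The problem is exactly the step you flag: you reduce the whole proposition to the assertion that $\Tot^m\bigl(\Sigma^\infty_+\sk_n\Delta^\bullet\bigr)\simeq\star$ for $m\geq n+1$, and then you do not prove it. The heuristic you offer---a cancellation between the $H_0$ and $H_n$ summands of the conormalized cochain complex, ``re-read'' from the emptiness of $\Tot^m\sk_n\Delta^\bullet$---is not an argument: Lemma \ref{l:empty} and Proposition \ref{p:Tot} assert the non-existence of certain maps of cosimplicial objects, and, as you yourself observe, $\Sigma^\infty_+$ does not commute with $\Tot^m$, so nothing transfers formally to the stable setting. Moreover, because the claim concerns the finite stage $\Tot^m$ rather than the full $\Tot$, a spectral-sequence or cochain-level argument must control the truncated conormalized complex including its top (cokernel) term; and note that the statement you need is precisely Proposition \ref{p:linear} in the special case of a reduced linear functor with value a sphere, so leaving it as ``the combinatorial heart'' leaves the proof essentially circular at its center. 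The paper closes this step without ever stabilizing the skeleton: in the reduced case it replaces the finite coproduct $F(X)\otimes U=\coprod_U F(X)$ by the weakly equivalent finite product in $\s$, uses that $\Tot^m$ commutes with products and (after realizing) with geometric realization, and then feeds in Proposition \ref{p:Tot}, i.e.\ $\Tot^m\sk_n\Delta^\bullet_*=\emptyset$ for $m\geq n+1$, to conclude directly that $\Tot^m|F(X\otimes_A\sk_n\Delta^\bullet_*)|\simeq\star$. To complete your route you must either actually prove the stable vanishing of the truncated totalization of $\Sigma^\infty_+\sk_n\Delta^\bullet$ (an explicit analysis of the truncated conormalized tower, not just of its cohomology below the top degree), or switch at this point to the paper's product argument.
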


When $F$ commutes with realizations, the case $m=n+1$ can be deduced from Proposition 3.0.3 of \cite{Rosona}, using the fact that in this case, $F(A)\simeq T_1F(A)$.  However the results of \cite{Rosona} do not extend to a pro-equivalence of towers.

\begin{proof}
 We claim that $F(X\otimes_A U) \simeq F(X)\otimes_{F(A)} U$ for any finite non-empty set $U$.      First note that if $U$ has exactly one element, then 
$F(X\otimes _A U)\simeq F(X)\simeq F(X)\otimes _{F(A)}U$.   For $U={\bf n}$, consider the $n$-cube ${X\otimes _A}$ with $(X\otimes _A)(T)=X\otimes _A T$ for subsets $T\subseteq U$. This is a strongly cocartesian  $n$-cube.  By Proposition 3.22 of \cite{BJM} and Proposition \ref{p:deg-exc}, $F$ is  $k$-excisive relative to $A$ for all $k\geq 1$.   As a result, $F(X\otimes_A)$ is a cartesian $n$-cube.   Since $F$ takes values in spectra, $F(X\otimes_A)$ is also cocartesian.   By induction, for each $T\subseteq {\bf n}$ with $T\neq {\bf n}$, $F(X\otimes_AT)\simeq F(X)\otimes_{F(A)}T$.   Since $F(X\otimes_A)$ is cocartesian, this implies that 
\begin{equation}\label{e:ind}\begin{aligned}
F(X\otimes _A {\bf n})&\simeq {\rm hocolim}_{T\subseteq {\bf n}\atop T\neq {\bf n}}F(X\otimes _AT)\\
&\simeq 
 \operatorname{ hocolim}_{T\subseteq {\bf n}\atop T\neq {\bf n}} F(X)\otimes _{F(A)} T\\
 &\simeq F(X)\otimes _{F(A)}{\bf n}.\end{aligned}
\end{equation}

 Now,  $F(A)\simeq F(X)\otimes_{F(A)} \emptyset$ as constant cosimplicial simplicial spectra.  So we have equivalences of cosimplicial simplicial spectra
\[ F(A) \simeq F(X)\otimes_{F(A)} \emptyset = F(X)\otimes_{F(A)} \Tot^m \sk_n \Delta^\bullet_*\]
whenever $m\geq n+1$, by Proposition \ref{p:Tot}.  
Suppose that $F(A)=\star$, the base point in the category $\s$.  In $\s$, the coproduct $F(X)\otimes U = \coprod_U F(X)$ is weakly equivalent to a product.  The functor $\Tot^m$ commutes with products, so we have an equivalence of simplicial spectra
\[ F(A)=\star \simeq \Tot^m( F(X)\otimes \sk_n \Delta^\bullet_*)\]
where the left hand side is a constant simplicial spectrum.

Upon taking the geometric realization, we obtain the equivalence
  \[\star\simeq |\Tot^m(F(X)\otimes \sk_n \Delta^\bullet_*)|\simeq \Tot^m(|F(X)\otimes \sk_n \Delta_*^\bullet|)\]
  since geometric realizations commute with finite homotopy limits such as $\Tot^m$ in $\s$.
Since $F$ is $1$-excisive relative to $A$, this is equivalent to $\Tot^m |F(X\otimes \sk_n \Delta_*^\bullet)|$ by (\ref{e:ind}).
Since this holds for every $m\geq n+1$, we obtain the desired pro-equivalence 
\[ \tower \star \simeq \tower |F(X \otimes \sk_n \Delta_*^\bullet)|.\]

Now, if $F(A)\neq \star$, form the reduced functor $\widetilde{F}$ by 
\[ \widetilde{F}(X) := \operatorname{hocofiber}\left( F(A)\to F(X) \right).\]
Note that if $F$ is degree 1 relative to $f$, then so is $\widetilde{F}$.   Furthermore, $\widetilde{F}(A)\simeq \star$.  Since geometric realization and the functor $\Tot^m$ preserve (co)fibration sequences of spectra, we have a  (co)fibration sequence 
\[ F(A) \to \Tot^m | F(X\otimes_A \sk_n \Delta^\bullet)| \to \Tot^m | \widetilde{F}(X\otimes_A \sk_n \Delta^\bullet)|.\]
By the previous case, $\Tot^m |\widetilde{F}(X\otimes_A \sk_n \Delta^\bullet)|\simeq \star$ whenever $m\geq n+1$.  The result follows.
\end{proof}

It is also possible to prove Proposition \ref{p:linear} by constructing explicit cosimplicial homotopies  to show that ${\Tot}|F(X\otimes_A \sk_n\Delta^{\bullet}_*)|\simeq \star$ in the case that $F(A)\simeq \star$.   

The conclusion of Proposition \ref{p:linear} can be reformulated as a statement about the coskeleta of  the cosimplicial simplicial spectrum $F(X\otimes_A \sk_n\Delta^\bullet_*)$.
 Let $Z$ be any cosimplicial object.  The $k$th matching  object of $Z$ is defined by
\[ M^k Z:\ = \ \lim_{\alpha:[k+1]\to [t]} X^t \]
where $\alpha:[k+1]\to [t]$ is a surjection in $\bf \Delta$ with $t\leq k$.  See \cite[\S VII.4]{GJ}  for details.  In particular, this means that 
\[ (\csk^n Z)^k = \begin{cases} Z^k & k\leq n \\
M^{k-1}(Z\circ i_n) & k>n\end{cases} \]
where  the restriction $Z\circ i_n$ is as discussed in Definition 2.2.  Note that, in particular, $(\csk^nZ)^{n+1}=M^nZ$.
\begin{cor}\label{l:csk}  If $F$ is degree 1 relative to $f$, then for all $X$ in $\Cf$ and all $m\geq n+1$, we have a levelwise equivalence of cosimplicial simplicial spectra \[ F(X\otimes_A \sk_n \Delta_*^\bullet)\simeq \csk^m F(X\otimes_A \sk_n \Delta_*^\bullet).\]
\end{cor}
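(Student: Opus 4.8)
Write $Z^\bullet$ for the cosimplicial simplicial spectrum $F(X\otimes_A \sk_n\Delta_*^\bullet)$ and $|Z^\bullet|$ for the cosimplicial spectrum obtained by realizing $Z^\bullet$ in the simplicial direction. The idea is to reinterpret the desired conclusion as the statement that $|Z^\bullet|$ is homotopy $m$-coskeletal for $m\geq n+1$, and to extract this from the vanishing of the higher layers of its $\Tot$-tower, which is precisely what Proposition \ref{p:linear} provides. Throughout I will tacitly replace $Z^\bullet$ by a Reedy fibrant model; this is harmless since every construction in sight ($\Tot^r$, $\csk^m$, realization) is homotopy invariant.

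First I would record the input. By Proposition \ref{p:linear}, for each $m\geq n+1$ the canonical map $F(A)\to \Tot^m|Z^\bullet|$ induced by $\emptyset\hookrightarrow \sk_n\Delta_*^\bullet$ is a weak equivalence. These maps are all restrictions of the single map of cosimplicial spectra $\mathrm{const}\,F(A)=F(X\otimes_A\emptyset)\to |Z^\bullet|$ and hence are compatible with the tower projections $\Tot^{m'}|Z^\bullet|\to \Tot^m|Z^\bullet|$; since each $F(A)\to\Tot^m|Z^\bullet|$ is a weak equivalence, it follows that the tower projection $\Tot^{m'}|Z^\bullet|\to\Tot^m|Z^\bullet|$ is a weak equivalence for all $m'\geq m\geq n+1$, equivalently that $\Tot^{r}|Z^\bullet|\to\Tot^{r-1}|Z^\bullet|$ is a weak equivalence for every $r\geq n+2$.

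Next I would invoke the standard identification of the layers of a $\Tot$-tower (see \cite[Ch.~X]{BK}, or \cite{H}): for a Reedy fibrant cosimplicial spectrum $W^\bullet$, the fiber of $\Tot^r W^\bullet\to\Tot^{r-1}W^\bullet$ is equivalent to the $r$-fold loop spectrum of the $r$-th (conormalized) cohomotopy object $\hofib\bigl(W^{r}\to M^{r-1}W^\bullet\bigr)$. Since we work in spectra, $r$-fold looping is invertible, so the previous paragraph forces the matching maps $|Z^{r}|\to M^{r-1}|Z^\bullet|$ to be weak equivalences for all $r\geq n+2$, i.e.\ for all $r>m$ whenever $m\geq n+1$. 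By the standard characterization of homotopy coskeletal objects, this is exactly the assertion that the unit map $|Z^\bullet|\to\csk^{m}|Z^\bullet|$ is a levelwise weak equivalence for every $m\geq n+1$. Finally I would transfer this back across realization: each matching object $M^{k}(-)$ is a finite homotopy limit, and geometric realization of simplicial spectra commutes with finite homotopy limits, so there is a natural equivalence $\csk^{m}|Z^\bullet|\simeq |\csk^{m}Z^\bullet|$. Combining, the map $|Z^\bullet|\to|\csk^{m}Z^\bullet|$ is a levelwise weak equivalence, which is precisely the claimed levelwise equivalence $F(X\otimes_A \sk_n\Delta_*^\bullet)\simeq \csk^{m}F(X\otimes_A \sk_n\Delta_*^\bullet)$ of cosimplicial simplicial spectra.

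The main obstacle is the middle step: the careful bookkeeping translating ``the $\Tot$-tower is eventually constant'' into ``the higher cohomotopy objects vanish'' into ``the object is homotopy coskeletal.'' Each of these equivalences is classical, but one must be attentive to the indexing convention for matching objects used in the excerpt (where $(\csk^n Z)^{n+1}=M^nZ$), and to the fact that the layer description requires Reedy fibrancy. The only other points needing a line of care are the compatibility of the equivalences $F(A)\to\Tot^m|Z^\bullet|$ with the tower (so that the tower maps themselves become equivalences), and the interchange of geometric realization with the finite homotopy limits computing $\csk^m$.
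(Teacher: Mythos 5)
Your proposal is correct and follows essentially the same route as the paper's proof: use Proposition \ref{p:linear} to see the $\Tot$-tower of $|F(X\otimes_A\sk_n\Delta_*^\bullet)|$ stabilizes above level $n+1$, identify the layers of that tower with $\Omega^r\hofib(Z^r\to M^{r-1}Z)$, use that looping is invertible on spectra to conclude the matching maps are equivalences, and then conclude coskeletality by the standard induction. Your added bookkeeping (compatibility of the equivalences with the tower projections, and commuting realization with the finite homotopy limits defining $M^k$ and $\csk^m$) only makes explicit steps the paper leaves implicit.
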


\begin{proof}  For any cosimplicial spectrum and any $m\geq 1$, there is a pullback square
\[ \xymatrix{ \Tot^m Z \ar[rr] \ar[d] && \hom(\Delta^m, Z^{m}) \ar[d] \\
\Tot^{m-1} Z \ar[rr] && \hom(\partial \Delta^m, Z^m)\times_{\hom(\partial \Delta^m, M^{m-1}Z)} \hom(\Delta^m, M^{m-1}Z)}
\]
where $\hom$ is the enriched $\hom$ in spectra.
From this pullback square, it is possible to show that
\[ \hofib (\Tot^mZ \to \Tot^{m-1}Z )\simeq \Omega^m \hofib (Z^m \to M^{m-1}Z)\]
(see (\cite[\S VIII.1]{GJ}), or  \cite{tot-primer} for full details).  When $Z=F(X\otimes_A \sk_n \Delta_*^\bullet)$, Proposition \ref{p:linear}  implies that $\Tot^m Z \to \Tot^{m-1} Z$ is an equivalence for all $m>n+1$.  Since $F$ is a functor to spectra, we can conclude that $Z^m \to M^{m-1}Z$ is an equivalence for all $m>n+1$ as well.  %By the definition of $\csk^m$, the conclusion now follows.

The rest of the proof follows by induction.  %When $m=n+2$, we have established that $Z^{n+2}\to M^{n+1}Z=(\csk^{n+1}Z)^{n+2}$ is an equivalence.  By definition of the coskeleton, we have that $(\csk^kZ)^{n+2}=Z^{n+2}$ for every $k>n+1$.  Thus for all $k\geq n+1$, we have $(\csk^kZ)^{n+2}=Z^{n+2}$. 

%Now assume that for $2\leq t< N$ we have $(\csk^k Z)^{n+k}=Z^{n+t}$ for all $k\geq n+1$.  When $t=N$, we have $(\csk^k Z)^{n+N}=Z^{n+N}$ by definition of $\csk^k$ for all $k\geq n+N$.  The case $k=n+N-1$ follows from the equivalence $Z^{n+N}\to (\csk^{n+N-1}Z)^{n+N}$ of the preceding paragraph.  If $n+1\leq m<n+N-1$, then
%\[ (\csk^m Z)^{n+N} = \lim_{\alpha:[n+N]\to [t]} Z^t\]
%where $\alpha$ is a surjection and $t\leq m$.  By inductive hypothesis, we already have for each $n+1\leq m<n+N-1$, \[Z^m= \lim_{\beta:[m]\to [t]} Z^t\] where $\beta$ is a surjection and $t\leq m$.  Thus, by cofinality, $(\csk^m Z)^{n+N} = \lim_{\alpha:[n+N]\to [t]} Z^t$ where $t\leq n+1$.  That is, $(\csk^m Z)^{n+N} = (\csk^{n+1} Z)^{n+N}$.
\end{proof}

We next consider the case of a functor that is  degree $k$ relative to $f$.  Recall that under the hypothesis that $F$ commutes with  realizations, this is the same as saying that $F$ is $k$-excisive (Proposition \ref{p:deg-exc}).

\begin{prop}\label{p:degree_n} Let $F:\AC \to \s$ be a functor that commutes with realizations.  Let $f:A\rightarrow B$ and   let $X$ be any object in $\Cf$.  For all $k\geq 1$ and $n\geq 0$, the map 
\[ V_kF(f)=\Gamma^f_k F(A) \to \Tot^t|\Gamma_k^fF(X\otimes_A \sk_n\Delta_*^\bullet)|\]
is an equivalence of spectra for each $t\geq (n+1)k$.
Thus there is a pro-equivalence of towers 
\[\tower V_kF(f) \to \tower |\Gamma_k^fF(X\otimes_A \sk_n\Delta_*^\bullet)|.\]
\end{prop}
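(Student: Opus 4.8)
The plan is to reduce the degree $k$ case to the linear (degree $1$) case handled in Proposition \ref{p:linear}, using the structural description of the Taylor tower provided by Proposition \ref{p:layers} — namely that the homotopy fibers $\widetilde{\Gamma}_k^f F(X)$ are homotopy orbits of a multilinear functor $\widetilde{L}_k^f F$. First I would set up an induction on $k$, with the base case $k=1$ being exactly Proposition \ref{p:linear} (the equivalence $\Tot^t|F(X\otimes_A \sk_n\Delta_*^\bullet)| \simeq F(A)$ for $t \geq n+1$, hence a fortiori for $t \geq n+1 = (n+1)\cdot 1$). For the inductive step, I would use the fibration sequence
\[
\widetilde{\Gamma}_k^f F(Y) \to \Gamma_k^f F(Y) \to \Gamma_{k-1}^f F(Y)
\]
evaluated levelwise at $Y = X\otimes_A \sk_n\Delta_*^\bullet$. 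Since geometric realization and $\Tot^t$ both preserve fibration sequences of spectra, it suffices to prove the analogous equivalence for $\widetilde{\Gamma}_k^f F$ with the sharper bound $t \geq (n+1)k$: the $\Gamma_{k-1}^f$ term is handled by the inductive hypothesis (which gives the bound $(n+1)(k-1) \leq (n+1)k$), so combining the two via the long exact sequence yields the claim for $\Gamma_k^f F$ with bound $(n+1)k$.

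The heart of the argument is therefore the homogeneous degree $k$ case. By Proposition \ref{p:layers}, $\widetilde{\Gamma}_k^f F(Y) \simeq [\widetilde{L}_k^f F(Y,\dots,Y)]_{h\Sigma_k}$ where $\widetilde{L}_k^f F$ is degree $1$ relative to $f$ in each variable and reduced ($n$-reduced) in each variable. The idea is to apply the multilinear analogue of Proposition \ref{p:linear}: when one plugs the cosimplicial simplicial object $X\otimes_A \sk_n\Delta_*^\bullet$ into each of the $k$ slots, one gets a $k$-multicosimplicial (simplicial) spectrum, and applying Corollary \ref{l:csk} in each variable separately shows that $\widetilde{L}_k^f F(Y^\bullet, \dots, Y^\bullet)$ agrees with its coskeleton $\csk_1^{n+1}\cdots\csk_k^{n+1}$ up to levelwise equivalence. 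Then Proposition \ref{p:fix} (the cosimplicial Eilenberg--Zilber result) identifies $\Tot_1^{n+1}\cdots\Tot_k^{n+1}$ of this multicosimplicial object with $\Tot^{(n+1)k}$ of its diagonal — and the diagonal of $\widetilde{L}_k^f F(Y^\bullet,\dots,Y^\bullet)$ is precisely $\widetilde{L}_k^f F(Y,\dots,Y)$ as a cosimplicial object, whose value on the constant object $\emptyset$ (i.e.\ at $A$) is $\widetilde{\Gamma}_k^f F(A)$ after taking $h\Sigma_k$. One must check that homotopy orbits $(-)_{h\Sigma_k}$, geometric realization, and $\Tot^t$ interact correctly: homotopy orbits commute with geometric realization (both are colimits) and the relevant $\Tot$ manipulations are of finite homotopy limits which commute with realization in spectra, so the $\Sigma_k$-action can be carried along throughout.

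The main obstacle I anticipate is the bookkeeping around the multivariable version of Corollary \ref{l:csk} and the precise coskeletal/connectivity estimate that yields the bound $t \geq (n+1)k$ rather than something weaker. Corollary \ref{l:csk} as stated is a single-variable statement about $F(X\otimes_A\sk_n\Delta_*^\bullet)$ for $F$ degree $1$ relative to $f$; to apply it to $\widetilde{L}_k^f F$ one needs to fix all but one variable at $X\otimes_A\sk_n\Delta_*^\bullet$ (or even at a general object) and invoke degree $1$-ness in the remaining slot, then argue that the resulting coskeletal equivalences in each variable can be combined — this uses that $\widetilde{L}_k^f F$ is genuinely multilinear, not merely that its diagonal is degree $k$. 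Care is also needed because $\sk_n\Delta_*^\bullet$ is not Reedy fibrant (cf.\ the Remark after Proposition \ref{p:Tot}), so Reedy fibrant replacement must be inserted before quoting Proposition \ref{p:fix}, and one must confirm this replacement is compatible with applying $F$ levelwise and with the $\Sigma_k$-action. The final sentence of the proposition — the pro-equivalence of $\Tot$-towers — then follows formally from the fact (recorded just before Lemma \ref{l:empty}) that a levelwise equivalence $\Tot^t X^\bullet \simeq \Tot^t Y^\bullet$ for all $t \geq (n+1)k$ automatically gives a pro-equivalence of the associated towers.
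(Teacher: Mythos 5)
Your proposal follows essentially the same route as the paper's proof: induction on $k$ via the fibration sequence $\widetilde{\Gamma}_k^f F \to \Gamma_k^f F \to \Gamma_{k-1}^f F$ with Proposition \ref{p:linear} as the base case, and the homogeneous layer handled by writing $\widetilde{\Gamma}_k^f F \simeq (\widetilde{L}_k^f F)_{h\Sigma_k}$ via Proposition \ref{p:layers}, applying Proposition \ref{p:linear} one variable at a time, and assembling with Corollary \ref{l:csk} and Proposition \ref{p:fix} before commuting $\Tot^{mk}$ past the homotopy orbits. Your added remarks about Reedy fibrant replacement and carrying the $\Sigma_k$-action through are reasonable refinements of the same argument, not a different approach.
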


\begin{proof}
We establish first that we have a pro-equivalence of cosimplicial spectra
\[ \tower\widetilde{\Gamma}_k^fF(A)\to \tower\ |\widetilde{\Gamma}_k^fF(X\otimes_A \sk_n \Delta_*^\bullet)|\]
where $\widetilde{\Gamma}_k^f F$ is the fiber of the natural transformation $q_k^f: \Gamma_k^fF\to \Gamma_{k-1}^fF$.
Let $\widetilde{L}^f_kF(-, \ldots , -)$ be the multilinear functor of $k$ variables associated to $\widetilde{\Gamma}_k^fF$ by Proposition \ref{p:layers}.  Denote the evaluation on the diagonal, $\widetilde{L}^f_kF(X, \ldots , X)$, by $\widetilde{L}^f_kF(X)$.
Since $F$ commutes with  realizations, Proposition \ref{p:layers} guarantees that $\widetilde{\Gamma}_k^fF(X) \simeq \widetilde{L}^f_kF(X)_{h\Sigma_k}$.

Fix objects $X_1, \ldots , X_{k-1}$ for the first $k-1$ variables of $\widetilde{L}^f_kF$ and consider the single variable functor $\widetilde{L}^f_kF(X_1, \ldots , X_{k-1}, -)$.
Since $\widetilde{L}^f_kF$ is linear in each variable, Proposition \ref{p:linear} implies that we have a weak equivalence of spectra
\[ \widetilde{L}^f_kF(X_1, \ldots , X_{k-1}, A) \simeq \Tot^m |\widetilde{L}^f_kF(X_1, \ldots, X_{k-1}, X\otimes_A \sk_n\Delta_*^\bullet)|\]
for each $X_i\in \Cf$ and each $m\geq n+1$.
We repeat this process in each variable separately to obtain a similar equivalence for each of the $k$ variables.  By Corollary \ref{l:csk} and Proposition \ref{p:fix}, these equivalences assemble to 
\[ \widetilde{L}^f_kF(A) \simeq \Tot^{mk} |\widetilde{L}^f_kF(X\otimes_A \sk_n\Delta_*^\bullet)|\]
for any $m\geq n+1$ and any $X\in \Cf$.  In the category $\s$, finite homotopy limits commute with finite homotopy colimits.  The partial totalization $\Tot^{mk}$ is a finite homotopy limit  so it commutes with the homotopy colimit that constructs the homotopy orbits of the $\Sigma_k$-action in spectra.   We have
\begin{align*} 
\widetilde{L}^f_kF(A)_{h\Sigma_k} &\simeq \left( \Tot^{mk}|\widetilde{L}^f_kF(X\otimes_A \sk_n\Delta_*^\bullet)|\right)_{h\Sigma_k}\\  
&\simeq \Tot^{mk} \left( |\widetilde{L}^f_k F(X\otimes_A \sk_n\Delta_*^\bullet)|_{h\Sigma_k} \right) \\ 
& \simeq \Tot^{mk}| (\widetilde{L}^f_k F(X\otimes_A \sk_n \Delta_*^\bullet))_{h\Sigma_k} |
\end{align*}
whenever $m\geq n+1$.  The last equivalence follows from the fact that homotopy colimits commute.  
Thus, we have a pro-equivalence 
\[\tower \widetilde{\Gamma}^f_kF(A)\to \tower |\widetilde{\Gamma}^f_kF(X\otimes_A \sk_n\Delta_*^\bullet)|\] for all $k$ and $n$ greater than or equal to $1$.

The proof of the proposition now follows by induction on $k$, using the (objectwise) fibration sequence of functors
\[ \widetilde{\Gamma}_k^fF \to \Gamma_k^fF \to \Gamma_{k-1}^fF.\]   
The base case is given by Proposition \ref{p:linear} since  $\Gamma_1^fF$ is degree 1.
  The functors $\Tot^t$ preserve homotopy fiber sequences, so applying $\Tot^t$ to this fibration sequence evaluated on the morphism $A\to X\otimes_A \sk_n\Delta^\bullet_*$ yields a commuting diagram
{\footnotesize{ \[ \xymatrix{ \widetilde{\Gamma}_k^fF(A)\ar[d]_\simeq \ar[r] &  \Gamma_k^fF(A) \ar[r]\ar[d] &  \Gamma_{k-1}^fF(A)\ar[d]^\simeq \\ \Tot^t |\widetilde{\Gamma}_k^fF(X\underset{A}{\otimes} \sk_n \Delta_*^\bullet)| \ar[r] & \Tot^t|\Gamma_k^fF(X\underset{A}{\otimes} \sk_n \Delta_*^\bullet)| \ar[r] & \Tot^t|\Gamma_{k-1}^fF(X\underset{A}{\otimes} \sk_n\Delta_*^\bullet)|.}\] }}
\hskip -5pt {with} fibration sequences in each row.  Here we have used that $\Tot^t Z=Z$ when $Z$ is a constant cosimplicial object.  We have already shown that the left hand arrow is a weak equivalence whenever $t\geq (n+1)k$.  Assuming that the right hand arrow is a weak equivalence whenever $t\geq (n+1)(k-1)$, we can conclude that the middle arrow is a weak equivalence whenever $t\geq (n+1)k$.

\end{proof}

The pro-equivalence of Proposition \ref{p:degree_n} extends to a pro-equivalence of the discrete calculus towers associated to each functor.  We prove this next.

\begin{thm}\label{t:infinity}  Suppose that $F:\AC\rightarrow \s$ commutes with realizations.  Let $f:A\rightarrow B$ and let $X$ be any object in $\Cf$.  For any $n\geq 0$, the tower of  spectra
\[ \{V_kF(f)\}_{k\geq 1}=\{ \Gamma_k^f F(A)\}_{k\geq 1} \]
is pro-equivalent to the tower of  $\Tot$-towers of spectra
\[ \{\Tot^{m(k+1)} | \Gamma^f_k F(X\otimes_A \sk_n \Delta^\bullet_*)| \}_{k\geq 1}.\]
\end{thm}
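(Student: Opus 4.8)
The plan is to pass from the finite-degree statement of Proposition~\ref{p:degree_n} to a statement about the whole tower by a diagonal/reindexing argument on towers, being careful that the second tower we want is a tower \emph{of $\Tot$-towers} rather than a tower of spectra. First I would fix $n\geq 0$ and pick any $m\geq n+1$. For each $k\geq 1$, Proposition~\ref{p:degree_n} gives that the natural map
\[ V_kF(f)=\Gamma_k^fF(A)\longrightarrow \Tot^t|\Gamma_k^fF(X\otimes_A \sk_n\Delta_*^\bullet)| \]
is a weak equivalence of spectra for every $t\geq (n+1)k$, and in particular for $t=m(k+1)$ since $m(k+1)\geq (n+1)(k+1)>(n+1)k$. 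So for \emph{each fixed} $k$ the $k$th entry of the first tower maps by a weak equivalence to the $k$th entry $\Tot^{m(k+1)}|\Gamma_k^fF(X\otimes_A \sk_n\Delta_*^\bullet)|$ of the second tower. The remaining content is to check that these levelwise equivalences are compatible with the tower maps in $k$ (so that together they form a map of towers), and then to observe that a levelwise weak equivalence of towers is in particular a pro-equivalence (Definition~\ref{d:pro}); indeed a levelwise equivalence admits the diagonal lift $B_{s}\to A_{s}$ with $t=0$ after inverting, which is exactly the pro-isomorphism condition, and on homotopy groups it induces isomorphisms at every level, hence pro-isomorphisms.

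The compatibility with the tower maps is where the real bookkeeping lies. The maps in the first tower are the natural transformations $q_k^f\colon\Gamma_k^fF\to\Gamma_{k-1}^fF$ evaluated at $A$, which are exactly the maps $\rho_k$ defining the varying center tower (Lemma~\ref{l:q-nat}). The maps in the second tower have two sources: the calculus maps $q_k^f$ applied to $F(X\otimes_A\sk_n\Delta_*^\bullet)$ degreewise, and the $\Tot$-tower maps $\Tot^{m(k+1)}\to\Tot^{m k}$ coming from the filtration of $\bf\Delta$; one must combine these to get a single map from the $k$th entry to the $(k-1)$st entry. Here I would use the commuting square at the end of the proof of Proposition~\ref{p:degree_n}: the natural transformation $q_k^f$ fits into a map of fibration sequences, and applying $\Tot^{m(k+1)}$ (a finite homotopy limit, which preserves fiber sequences) gives a square in which the outer vertical maps are equivalences in the appropriate range; naturality of $q_k^f$ in $F$ together with naturality of the totalization filtration maps then produces the required map of towers. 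The statement that we get a pro-equivalence of towers \emph{of towers} is then a formality: a map of towers indexed by $k$, each of whose components is itself a tower (indexed by the internal $\Tot$-degree), which is a levelwise-in-$k$ equivalence in the relevant range, is a pro-equivalence in the $k$-direction; no claim is made about the internal $\Tot$-direction beyond recording it.

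The main obstacle I anticipate is not any deep homotopy theory but the organization of the reindexing: making precise that ``$\Tot^{m(k+1)}$ depends on $k$'' does not spoil the pro-equivalence, because raising the internal totalization degree only makes the $k$th entry \emph{more} equivalent to $\Gamma_k^fF(A)$, and the diagonal map required by Definition~\ref{d:pro} can be taken with shift $t=0$ once we know each level is an honest equivalence. A secondary point to be careful about is that $\Tot^{m(k+1)}Z=Z$ when $Z$ is the constant cosimplicial object $\Gamma_k^fF(A)$ (used already in Proposition~\ref{p:degree_n}), so the comparison map is genuinely $V_kF(f)\to\Tot^{m(k+1)}|\Gamma_k^fF(X\otimes_A\sk_n\Delta_*^\bullet)|$ and lands in the asserted tower on the nose. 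Assembling these observations, the theorem follows from Proposition~\ref{p:degree_n} by inspection.
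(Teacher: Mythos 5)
Your proposal is correct and follows essentially the same route as the paper: the paper also deduces the theorem from Proposition \ref{p:degree_n} by exhibiting the commuting square that combines $q_{k+1}^f$, the $\Tot$-tower restriction map $b$, and the comparison maps $\alpha,\alpha'$ induced by $\emptyset\to\sk_n\Delta_*^\bullet$, noting that these comparison maps (and $b$) are weak equivalences once $m\geq n+1$, and then invoking Definition \ref{d:pro} to conclude the pro-equivalence. Your levelwise-equivalence-plus-naturality packaging is just a slight rephrasing of the paper's use of $\Tot(q_{k+1}^f)$ as the diagonal map, so there is nothing to add.
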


\begin{proof}
For any $k\geq 0$ and any $m\geq n+1$, we have a commuting square
\[ \xymatrix{ \Gamma_{k+1}^f F(A) \ar[r]^{\alpha\qquad\quad\qquad} \ar[dd]_{q_{k+1}^f} & \Tot^{m(k+1)} | \Gamma^f_{k+1}F(X\otimes_A \sk_n \Delta_*^\bullet)| \ar[d]^{\Tot(q^f_{k+1})} \\
& \Tot^{m(k+1)} | \Gamma^f_k F(X\otimes_A \sk_n \Delta_*^\bullet)| \ar[d]^b \\
\Gamma_k^f F(A)\ar[r]^{\alpha'\qquad\qquad} & \Tot^{mk} | \Gamma^f_k F(X\otimes_A \sk_n\Delta_*^\bullet)|}\]
where $q_n^f$ is the natural transformation from Theorem \ref{t:tower}, and $b$ is the usual fibration between stages of the $\Tot$-tower.  The horizontal maps $\alpha$ and $\alpha'$ are the maps induced by the inclusion $\emptyset\to \sk_n\Delta_*^\bullet$ from Proposition \ref{p:degree_n}.  By the proof of Proposition \ref{p:degree_n},  $\alpha$, $\alpha'$ and $b$ are weak equivalences whenever $m\geq n+1$.  The map $\Tot(q_{k+1}^f)$ provides the diagonal map from Definition \ref{d:pro}, hence the towers are pro-equivalent.

\end{proof}

An immediate consequence of Theorem \ref{t:infinity} is that there is a weak equivalence of spectra
\[ V_\infty F(f) \to \Tot |\Gamma^f_\infty F(X\otimes_A \sk_n\Delta_*^\bullet)|\]
%(or, equivalently, ${\Gamma}^{\AC}_\infty F(f:A\to B) \simeq \Tot | \Gamma^f_\infty F(X\otimes_A \sk_n \Delta_*^\bullet)|$)
obtained by taking the inverse limit of the towers in the statement of the theorem.
We can use this result to better understand the relationship between  the limit of the  varying center tower $\{ {V_n}F\}$
and the  functor $F$.   The last two results of the paper show that  if $F$ is analytic in the sense of Definition \ref{d:rho}, then $ {V}_{\infty}F$ is equivalent to the functor that takes $A\rightarrow B$ to 
$F(B\otimes _A sk_n\Delta ^{\bullet})$.    In light of Proposition \ref{p:Tot} and since the map $A\to B\otimes_A sk_n\Delta^\bullet$ is induced by $\emptyset\to \sk_n \Delta^\bullet$,  this tells us that the  failure of the varying center tower $\{ V_nF\}$ 
to converge is measured by the failure of $F$ to commute with $\Tot$.

In the case of spaces or spectra, we use Corollary 1.4 of \cite{Rosona-AGT} to show that the  equivalence between $V_{\infty}F$ and  $F(B\otimes _A \sk_n\Delta ^{\bullet})$ holds when $F$ is a weakly $\rho$-analytic functor, as defined below.    
\begin{defn}\label{d:rho}
Let $F:\C\rightarrow \s$ where $\C$ is either $Top$ or $\s$.  Let $f:A\rightarrow B$ be a morphism in $\C$.   We say that $F$ is weakly $\rho$-analytic relative to $f$ provided that for any object 
$A\rightarrow X\rightarrow B$ in $\Cf$ where $X\rightarrow B$ is $\rho$-connected,
\[F(X)\overset{\simeq\ }{\rightarrow} \Gamma_{\infty}^fF(X).
\] 
\end{defn}
This condition is related to Goodwillie's stronger condition of $\rho$-analyticity (see \cite{G2} for details) in the sense that both conditions guarantee convergence of Taylor towers.   In particular, any $\rho$-analytic functor is also a weakly $\rho$-analytic functor.  

\begin{thm} \label{t:connected} Suppose that $F:\C \to \s$ is a homotopy functor where $\C$ is either $Top$ or $\s$, $f:A\rightarrow B$ is a $c$-connected map in $\C$, and $F$ is weakly $\rho$-analytic relative to $f$.   
If $F$ commutes with realizations, then there is  a weak equivalence of spectra
\[ V_\infty F(f) \overset{\simeq\ }{\rightarrow} \Tot |F(B\otimes_A \sk_n\Delta^\bullet )|\]
whenever $n\geq \rho-c-1$ is a non-negative integer.
\end{thm}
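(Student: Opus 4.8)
The plan is to combine Theorem \ref{t:infinity}, the convergence hypothesis (weak $\rho$-analyticity), and the connectivity estimates for $X\otimes_A \sk_n\Delta^\bullet_*$ to pass from a statement about $\Gamma_\infty^f F$ to one about $F$ itself. The starting point is the immediate consequence of Theorem \ref{t:infinity} noted just before the statement: since $F$ commutes with realizations, for any factorization $A\to X\to B$ of $f$ we have a weak equivalence of spectra
\[ V_\infty F(f)\ \overset{\simeq}{\rightarrow}\ \Tot\,|\Gamma_\infty^f F(X\otimes_A \sk_n\Delta^\bullet_*)|. \]
Taking $X=B$ (so the chosen factorization is $A\to B=B$), it then suffices to show that the natural map $F(B\otimes_A\sk_n\Delta^\bullet_*)\to \Gamma_\infty^f F(B\otimes_A\sk_n\Delta^\bullet_*)$ is a levelwise weak equivalence of cosimplicial simplicial spectra; applying $\Tot\,|\cdot|$ then finishes the argument. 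So the whole theorem reduces to a levelwise (i.e. bidegreewise) connectivity check.

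First I would fix cosimplicial degree $p$ and simplicial degree $q$ and analyze the object $B\otimes_A (\sk_n\Delta^p_*)_q$, which is a coproduct over $A$ of copies of $B$ indexed by the finite set $U=(\sk_n\Delta^p_*)_q$. The key point is a connectivity estimate: since $f:A\to B$ is $c$-connected and $U$ is non-empty, the map $A\to B\otimes_A U$ (induced by $\emptyset\to U$, equivalently by the fold/insertion maps) is at least $(c+1)$-connected — intuitively, each of the $|U|$ copies of $B$ contributes a $c$-connected map and gluing along $A$ can only improve connectivity, by the usual Blakers–Massey / fiberwise join type estimates that underlie Goodwillie's $P_n$ construction (this is exactly the kind of estimate appearing in \cite[Section 1]{G3} and is where the hypothesis $n\geq\rho-c-1$ comes from). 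Concretely one wants: when $n\geq \rho-c-1$, the map $B\otimes_A U\to B$ — which is the relevant map measuring how far $B\otimes_A U$ is from the ``center'' $B$ in $\Cf$ — is $\rho$-connected, so that $A\to B\otimes_A U\to B$ is a factorization of $f$ with the second map $\rho$-connected. Here $U$ being the $q$-simplices of $\sk_n\Delta^p_*$ is a set whose combinatorial size grows with $n$; the bound $n\geq\rho-c-1$ is precisely what guarantees the needed connectivity uniformly in $p,q$.

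Once that connectivity statement is in hand, weak $\rho$-analyticity of $F$ relative to $f$ (Definition \ref{d:rho}) applies directly to the object $B\otimes_A(\sk_n\Delta^p_*)_q$ of $\Cf$: since the map from it to $B$ is $\rho$-connected, $F(B\otimes_A(\sk_n\Delta^p_*)_q)\to \Gamma_\infty^f F(B\otimes_A(\sk_n\Delta^p_*)_q)$ is a weak equivalence. As this holds for every $p$ and every $q$, we get a levelwise equivalence of cosimplicial simplicial spectra
\[ F(B\otimes_A\sk_n\Delta^\bullet_*)\ \overset{\simeq}{\longrightarrow}\ \Gamma_\infty^f F(B\otimes_A\sk_n\Delta^\bullet_*), \]
and applying the (homotopy-invariant) functors $|\cdot|$ and $\Tot$ preserves this equivalence. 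Composing with the equivalence coming from Theorem \ref{t:infinity} yields the asserted $V_\infty F(f)\overset{\simeq}{\to}\Tot\,|F(B\otimes_A\sk_n\Delta^\bullet)|$.

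The main obstacle I anticipate is the connectivity bookkeeping in the middle step: verifying that $n\geq\rho-c-1$ forces the map $B\otimes_A(\sk_n\Delta^p_*)_q\to B$ to be $\rho$-connected, uniformly over all bidegrees $(p,q)$. This requires knowing the relevant connectivity of the iterated pushout $B\otimes_A U$ as a function of $c$ and $|U|$, together with the fact that $\sk_n\Delta^p_*$ has enough simplices (controlled by $n$, not by $p$) to push the relevant connectivity past $\rho$ — essentially the estimate for the fiberwise join $X*_B U$ from \cite[Section 1]{G3} transported to $\Cf$. A secondary technical point is making the reduction ``$X=B$'' and the passage through $\Tot\,|\cdot|$ precise: one should check that all objects in sight can be taken Reedy fibrant so that $\Tot$ is computed correctly and commutes appropriately with $|\cdot|$, as arranged in Section 2; this is routine given the conventions already fixed in the paper.
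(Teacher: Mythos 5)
Your overall skeleton (reduce via Theorem \ref{t:infinity} to showing that $F\to\Gamma_\infty^fF$ becomes an equivalence after applying $\Tot|\cdot|$ to the cosimplicial object $B\otimes_A\sk_n\Delta^\bullet_*$) is the same as the paper's, but the step you yourself flag as the ``connectivity bookkeeping'' is where the argument breaks, and the mechanism you propose for it is wrong. You want to apply weak $\rho$-analyticity bidegreewise, to $B\otimes_A U$ with $U=(\sk_n\Delta^p_*)_q$ a finite \emph{set}, arguing that the map $B\otimes_A U\to B$ becomes $\rho$-connected because the cardinality of $U$ grows with $n$. But the connectivity of the fiberwise join over $B$ with a nonempty finite set does not depend on the cardinality at all: its fiber over $B$ is the (ordinary) join of the fiber of $f$ with a discrete set, so if $f$ is $c$-connected then $B\otimes_A U\to B$ is only $(c+1)$-connected, no matter how many simplices $\sk_n\Delta^p$ has in degree $q$. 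Hence weak $\rho$-analyticity cannot be invoked bidegreewise unless $\rho\le c+1$, and your levelwise equivalence $F(B\otimes_A\sk_n\Delta^\bullet_*)\simeq\Gamma_\infty^fF(B\otimes_A\sk_n\Delta^\bullet_*)$ is unjustified.

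The actual source of the connectivity gain is the dimension of the skeleton, not the size of its sets of simplices: one must first use that $F$ commutes with realization to replace the bidegreewise objects by $F(A*_B\sk_n\Delta^k)$ with $\sk_n\Delta^k$ an $(n-1)$-connected \emph{space}, so that the join raises connectivity by $n+1$ rather than by $1$. The paper implements this by iterating Goodwillie's two facts about $A*_BU$ for finite sets: it forms the join of $n+1$ copies of $\sk_0\Delta^k$, each join adding $1$ to connectivity, so that $A*_B(\sk_0\Delta^k*\cdots*\sk_0\Delta^k)\to B$ is $(c+n+1)\geq\rho$-connected and analyticity applies levelwise in the cosimplicial direction there; it then invokes Eldred's comparison (Theorem 1.2 of \cite{Rosona-AGT} with Remark 7.1.3 of \cite{Rosona}) to identify $\Tot|F(A*_B(\sk_0\Delta^\bullet*\cdots*\sk_0\Delta^\bullet))|$ with $\Tot|F(A*_B\sk_n\Delta^\bullet)|$, and only then concludes with Theorem \ref{t:infinity}. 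Both the realization step (or some substitute giving the $(n-1)$-connectedness of the skeleton into the estimate) and the comparison between the iterated-join cosimplicial object and $\sk_n\Delta^\bullet$ are missing from your proposal, and without them the inequality $n\geq\rho-c-1$ never enters the argument correctly.
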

\begin{proof}

 In the category of spaces or spectra, the product $B\otimes_A U$ is the same as the join construction $A*_B U$ of \cite{G3} for any finite set $U$.  From \cite{G3}, we have the  useful facts that
\begin{itemize}
\item  $A*_B(U*V) \cong (A*_B U)*_B V$ where $U*V$ is the ordinary join of two spaces, and
\item $A*_B U \to B$ is at least $(m+1)$-connected if $f:A\to B$ is at least $m$-connected and $U$ is not empty.
\end{itemize}
Thus, if $f:A\to B$ is at least $c$-connected, then
\[ A*_B (\sk_0\Delta^k * \cdots * \sk_0 \Delta^k) \to B \]
is at least $(c+n+1)$-connected, where $\sk_0\Delta^k * \cdots * \sk_0\Delta^k$ is the join of $n+1$ copies of $\sk_0\Delta^k$ with itself.  Theorem 1.2 of \cite{Rosona-AGT} combined with Remark 7.1.3 of  \cite{Rosona} says that for any homotopy functor $F$, there is a weak equivalence
\[ \Tot |F(A*_B (\sk_0\Delta^\bullet * \cdots * \sk_0\Delta^\bullet))| \simeq \Tot |F(A*_B\sk_n \Delta^\bullet)|\]
where  $\sk_0\Delta^\bullet * \cdots * \sk_0\Delta^\bullet$ denotes the join of $n +1$ copies of $\sk_0\Delta^\bullet$ with itself.  Since $F$ is weakly $\rho$-analytic, for each $k$ we have a weak equivalence
\[F(A*_B (\sk_0\Delta^k * \cdots * \sk_0\Delta^k)) \simeq \Gamma^f_\infty F(A*_B (\sk_0\Delta^k* \cdots * \sk_0\Delta^k))  \]
as long as $n\geq \rho-c-1$.  This levelwise equivalence of cosimplicial spectra assembles to produce an equivalence of the associated total complexes.  Putting this together with the aforementioned result from \cite{Rosona-AGT} and \cite{Rosona}, we have a weak equivalence
\[ \Tot |F(A*_B \sk_n \Delta^\bullet)| \simeq \Tot |\Gamma^f_\infty F(A*_B \sk_n \Delta^\bullet)|.\]
The left hand side of this equivalence is $\Tot |F(B\otimes_A \sk_n\Delta^\bullet)|$, the right hand side is $\Tot |\Gamma^f_\infty F(B\otimes_A \sk_n\Delta^\bullet)|$, and the conclusion now follows from Theorem \ref{t:infinity}.

\end{proof}

As a special case of Theorem \ref{t:connected}, we obtain the desired comparison of the Goodwillie-Waldhausen and Rezk constructions.

\begin{cor}\label{c:last}
Let $\mathcal U$ be the forgetful functor from  rational commutative ring spectra to modules and let $f:{\mathbb Q}\rightarrow B$ be a morphism of rational commutative ring spectra.   Then there is a weak equivalence
\[V_{\infty}{\mathcal U}({\mathbb Q}\rightarrow B)\simeq {\rm Tot}|B\otimes _{\mathbb Q} \sk _1{\Delta}_*^{\bullet}|.
\]
\end{cor}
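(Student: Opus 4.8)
The plan is to obtain Corollary~\ref{c:last} as a direct application of Theorem~\ref{t:connected}, after checking that the hypotheses of that theorem are met in the case at hand. First I would set $\C = \s$ (the category of $S$-modules), $A = H\Q$, $B$ a rational commutative ring spectrum, $F = \mathcal{U}$ the forgetful functor from rational commutative ring spectra to $H\Q$-modules, and $f : H\Q \to B$ the structure map. The term $B \otimes_{H\Q} \sk_1 \Delta_*^\bullet$ is exactly the cosimplicial object appearing on the right side of Theorem~\ref{t:connected} once we take $n = 1$, so the whole content is to verify that $n=1$ is an admissible choice and that $\mathcal{U}$ satisfies the three standing assumptions: being a homotopy functor, commuting with geometric realization, and being weakly $\rho$-analytic relative to $f$ for a suitable $\rho$.

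The key steps, in order, are as follows. Step one: observe that $\mathcal{U}$ is a homotopy functor, since it merely forgets the multiplicative structure and weak equivalences of ring spectra are detected on underlying modules. Step two: note that $\mathcal{U}$ commutes with realizations, because geometric realization of a simplicial commutative ring spectrum is computed on underlying modules (the smash product and realization commute), so $|\mathcal{U}(X_\cdot)| \to \mathcal{U}(|X_\cdot|)$ is an equivalence; alternatively one invokes the remark after Theorem~\ref{t:connected} together with the fact that $\mathcal{U}$ commutes with filtered colimits. Step three, the substantive point: identify the analyticity radius. As recorded in the introduction, $\mathcal{U}$ satisfies $\mathcal{U}(X) \simeq \Gamma_\infty^f \mathcal{U}(X)$ whenever $X \to B$ is $1$-connected, i.e.\ $\mathcal{U}$ is weakly $1$-analytic relative to $f$, so $\rho = 1$. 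Step four: since $f : H\Q \to B$ is (at least) $0$-connected, we may take $c = 0$; then the numerical hypothesis of Theorem~\ref{t:connected}, namely $n \geq \rho - c - 1 = 0$, is satisfied by $n = 1$, and $n=1$ is the value built into Rezk's combinatorial model. Step five: apply Theorem~\ref{t:connected} with these choices to conclude $V_\infty \mathcal{U}(H\Q \to B) \simeq \Tot |\mathcal{U}(B \otimes_{H\Q} \sk_1 \Delta^\bullet)|$, and drop the notationally-suppressed $\mathcal{U}$ (which is harmless since $\Tot$ and realization are computed on underlying modules) to get the stated form with $B \otimes_\Q \sk_1 \Delta_*^\bullet$.

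I expect the main obstacle to be the bookkeeping around the connectivity constant $c$ and the analyticity radius $\rho$: one has to be careful about whether $H\Q \to B$ should be regarded as $0$-connected or $(-1)$-connected (i.e.\ which convention Theorem~\ref{t:connected} uses), since this affects whether $n = 1$ clears the bound $n \geq \rho - c - 1$. With $\rho = 1$ the bound reads $n \geq -c$, which $n = 1$ satisfies for any $c \geq -1$, so there is real slack here and no genuine difficulty; the only thing to get right is citing the correct statement (from the introduction and \cite{BJM}) that $\mathcal{U}$ is weakly $1$-analytic. Beyond that, the proof is essentially a one-line specialization: everything hard has already been done in Theorem~\ref{t:infinity}, Theorem~\ref{t:connected}, and the de Rham discussion of Section~3.4, and Corollary~\ref{c:last} is simply the promised payoff for $F = \mathcal{U}$, $A = H\Q$, $n = 1$.
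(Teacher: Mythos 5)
Your proposal is correct and takes essentially the same route as the paper: both obtain the corollary by specializing Theorem \ref{t:connected} to $F=\mathcal U$, $A=H\Q$, $n=1$, using that $\mathcal U$ is weakly $1$-analytic relative to $f$ so that $n=1\geq \rho-c-1$. The only cosmetic difference is that the paper grounds the $1$-analyticity in the Blakers--Massey theorem for spectra rather than in the introductory remark you cite, but this is the same fact.
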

\begin{proof}
By the Blakers-Massey theorem (see \cite{G2}, or \cite{DS} for spectra), ${\mathcal U}$ is $1$-analytic.   Using this, the result follows immediately from Theorem 4.6.
\end{proof}

When $B=*$, the usual terminal object of $Top$ or $\s$, Theorem \ref{t:connected} follows more directly from Theorem \ref{t:infinity}.   In particular, $\sk_n \Delta^{\bullet}$ is at least $n$-connected, and hence,  $A*_B \sk_n\Delta^{\bullet}\to B$ is at least $n$-connected.    If $F$ is weakly $n$-analytic, this is sufficient to conclude that  $F(A*_B \sk_n\Delta^{\bullet}) \simeq \Gamma_\infty^fF(A*_B \sk_n\Delta^{\bullet})$ and so in this case, Theorem \ref{t:connected} follows immediately from Theorem \ref{t:infinity}. 

More generally, the condition that $F$ be weakly $\rho$-analytic can be replaced with a condition dictating that $F$ and $\Gamma_{\infty}F$ are equivalent on the objects $X\otimes _A \sk_n\Delta^k$.   
\begin{cor}  \label{p:analyticprep} Let $F:\C\rightarrow \s$ be a  functor that commutes with realizations.  Let $f:A\rightarrow B$ be a morphism in $\C$ and $X$ be an object in $\Cf$.   Suppose there exists an $n\geq 0$ for which we have a weak equivalence of spectra
\[  |F(X\otimes_A \sk_n \Delta^k)| \simeq  |\Gamma_\infty^fF(X\otimes_A \sk_n\Delta^k)| \]
for all cosimplicial degrees $k\geq 0$.  Then $\Tot |F(X\otimes_A \sk_n \Delta^\bullet)| \simeq \Gamma^f_\infty F(A)=V_{\infty}F(f:A\rightarrow B)$.
\end{cor}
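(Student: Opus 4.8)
The plan is to read the statement off from Theorem \ref{t:infinity} together with the homotopy invariance of $\Tot$. Recall the immediate consequence of Theorem \ref{t:infinity} recorded above: since $F$ commutes with realizations, for every $n\geq 0$ and every object $X$ of $\Cf$ there is a weak equivalence of spectra
\[ V_\infty F(f:A\rightarrow B) = \Gamma_\infty^f F(A) \;\simeq\; \Tot |\Gamma_\infty^f F(X\otimes_A \sk_n \Delta_*^\bullet)|. \]
So it will be enough to identify the right-hand side with $\Tot |F(X\otimes_A \sk_n \Delta_*^\bullet)|$.

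To do this I would first observe that the natural transformation $\gamma_\infty^f\colon F\to \Gamma_\infty^f F$, obtained as the inverse limit of the maps $\gamma_k^f$ of Diagram (\ref{e:taylortower}), can be applied degreewise to the cosimplicial simplicial object $X\otimes_A \sk_n\Delta_*^\bullet$ and then realized in the simplicial direction; this produces a map of cosimplicial spectra
\[ |F(X\otimes_A \sk_n\Delta_*^\bullet)| \longrightarrow |\Gamma_\infty^f F(X\otimes_A\sk_n\Delta_*^\bullet)|. \]
By hypothesis, in each cosimplicial degree $k$ this map is a weak equivalence, so it is a levelwise, hence Reedy, weak equivalence of cosimplicial spectra. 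Because $\Tot=\holim_{{\bf \Delta}}$ is homotopy invariant, applying $\Tot$ gives a weak equivalence
\[ \Tot |F(X\otimes_A \sk_n\Delta_*^\bullet)| \;\simeq\; \Tot |\Gamma_\infty^f F(X\otimes_A\sk_n\Delta_*^\bullet)|, \]
and composing with the equivalence above yields the claimed $\Tot |F(X\otimes_A \sk_n \Delta^\bullet)| \simeq \Gamma_\infty^f F(A) = V_\infty F(f:A\rightarrow B)$.

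There is essentially no obstacle here: all of the substance lives in Theorem \ref{t:infinity}, and this corollary merely repackages it once the hypothesized equivalences are in hand. The one point worth making explicit is that the degreewise equivalences in the hypothesis are to be understood as the ones induced by $\gamma_\infty^f$, so that they genuinely assemble into a map of cosimplicial objects to which homotopy invariance of $\Tot$ applies; this is automatic, since $\gamma_\infty^f$ is a natural transformation and the hypothesis simply says that it becomes a weak equivalence on each object $X\otimes_A \sk_n\Delta^k$. In contrast to Theorem \ref{t:connected}, no connectivity or analyticity assumption on $f$ is needed — in that theorem the connectivity estimates were used precisely to verify the hypothesis that is assumed outright here.
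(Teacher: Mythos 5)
Your argument is correct and is essentially the paper's own proof: the levelwise equivalences (induced by $\gamma_\infty^f$) give $\Tot |F(X\otimes_A \sk_n\Delta^\bullet)| \simeq \Tot |\Gamma_\infty^f F(X\otimes_A \sk_n\Delta^\bullet)|$ by homotopy invariance of $\Tot$, and composing with the equivalence from Theorem \ref{t:infinity} finishes the proof. Your extra remark about the maps assembling into a map of cosimplicial objects just makes explicit a point the paper leaves implicit.
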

\begin{proof}  The levelwise hypothesis of the statement guarantees that there is a weak equivalence of spectra  \[\Tot| F(X\otimes_A \sk_n\Delta^\bullet)| \simeq \Tot |\Gamma^f_\infty F(X\otimes_A \sk_n \Delta^\bullet)|.\]   Composing with the equivalence from Theorem \ref{t:infinity}, we have 
\[\Gamma_\infty^fF(A) \simeq \Tot |F(X\otimes_A \sk_n \Delta^\bullet)|,\]
which implies the result.

\end{proof}

The key point in requiring the existence of $n$ in Corollary \ref{p:analyticprep} is that the space $\sk_n\Delta^k$ is at least $n$-connected for all $k$.    So, like the analyticity condition in Theorem \ref{t:connected}, the condition in this proposition requires convergence on analogues of $n$-connected objects.

%---------------------------------

\end{document}